\numberwithin{equation}{section}
\newtheorem{proposition}{Proposition}[section]
\newtheorem{lemma}[proposition]{Lemma}
\newtheorem{theorem}[proposition]{Theorem}
\newtheorem{corollary}[proposition]{Corollary}
\newtheorem{conjecture}{Conjecture}[section]
\theoremstyle{definition}
\newtheorem{remark}[proposition]{Remark}
\newtheorem{definition}[proposition]{Definition}
\DeclareMathOperator{\Id}{Id}
\DeclareMathOperator{\Aut}{Aut}
\DeclareMathOperator{\Ric}{Ric}
\DeclareMathOperator{\V}{V}
\DeclareMathOperator{\Isom}{Isom}
\DeclareMathOperator{\Lie}{Lie}
\renewcommand{\L}{\mathcal{L}}
\renewcommand{\phi}{\varphi}
\DeclareMathOperator{\Stab}{Stab}
\newcommand{\scG}{\mathcal{G}}
\newcommand{\scP}{\mathcal{P}}
\newcommand{\scQ}{\mathcal{Q}}
\newcommand{\R}{\mathbb{R}}
\newcommand{\C}{\mathbb{C}}
\newcommand{\G}{\mathcal{G}}
\newcommand{\pr}{\mathbb{P}}
\renewcommand{\epsilon}{\varepsilon}
\newcommand{\scH}{\mathcal{H}}
\newcommand{\D}{\mathcal{D}}
\newcommand{\M}{\mathcal{M}}
\newcommand{\scN}{\mathcal{N}}
\newcommand{\scO}{\mathcal{O}}
\newcommand{\ddb}{i\partial \bar\partial}
\newcommand{\scL}{\mathcal{L}}
\newcommand{\mfh}{\mathfrak{h}}
\newcommand{\mft}{\mathfrak{t}}
\newcommand{\mfg}{\mathfrak{g}}
\newcommand{\mfp}{\mathfrak{p}}
\newcommand{\F}{\mathcal{F}}
\newcommand{\scB}{\mathcal{B}}
\newcommand{\mfk}{\mathfrak{k}}
\newcommand{\scD}{\mathcal{D}}
\newcommand{\scV}{\mathcal{V}}
\newcommand{\scR}{\mathcal{R}}
\newcommand{\ddc}{i\partial\bar\partial}
\newcommand{\db}{\mathcal{\bar\partial}}
\title[Optimal symplectic connections on holomorphic submersions]{Optimal symplectic connections on holomorphic submersions}
\author[Ruadha\'i Dervan and Lars Martin Sektnan]{Ruadha\'i Dervan and Lars Martin Sektnan}
\address{Ruadha\'i Dervan, DPMMS, Centre for Mathematical Sciences, Wilberforce Road, Cambridge CB3 0WB, United Kingdom}
\email{R.Dervan@dpmms.cam.ac.uk}
\address{Lars Martin Sektnan, D\'epartement de math\'ematiques, Universit\'e du Qu\'ebec \`a Montr\'eal, Case postale 8888, succursale
centre-ville, Montr\'eal (Qu\'ebec), H3C 3P8, Canada}
\email{lars.sektnan@cirget.ca}
\begin{document}

\begin{abstract}  

The main result of this paper gives a new construction of extremal K\"ahler metrics on the total space of certain holomorphic submersions, giving a vast generalisation and unification of results of Hong, Fine and others. The principal new ingredient is a novel geometric partial differential equation on such fibrations, which we call the optimal symplectic connection equation. 

We begin with a smooth fibration for which all fibres admit a constant scalar curvature K\"ahler metric. When the fibres admit automorphisms, such metrics are not unique in general, but rather are unique up to the action of the automorphism group of each fibre. We define an equation which, at least conjecturally, determines a canonical choice of constant scalar curvature K\"ahler metric on each fibre. When the fibration is a projective bundle, this equation specialises to asking that the hermitian metric determining the fibrewise Fubini-Study metric is Hermite-Einstein. 

Assuming the existence of an optimal symplectic connection, and the existence of an appropriate twisted extremal metric on the base of the fibration, we show that the total space of the fibration itself admits an extremal metric for certain polarisations making the fibres small.

\end{abstract}

\maketitle


\section{Introduction}

A hermitian metric $h$ on a holomorphic vector bundle $E$ over a compact K\"ahler manifold $(B,\omega_B)$ induces a hermitian metric on the line bundle $\scO_{\pr(E)}(1)$ over the projectivisation $\pr(E)$ of $E$ with curvature  $\omega_h \in c_1(\scO_{\pr(E)}(1))$. On each fibre $\pr(E)_b$ of $\pi: \pr(E) \to B$ over $b \in B$, $\omega_h$ restricts to a Fubini-Study metric. A Fubini-Study metric on projective space is not unique, but rather is unique up to the automorphisms of projective space. In this way, one can think of a hermitian metric on a vector bundle as a choice of Fubini-Study metric on each fibre on the associated projective bundle. Going one step further, one can ask for a hermitian metric to solve the \emph{Hermite-Einstein}\ equation: if $F_h$ is the curvature of $h$, this equation is \begin{equation}\label{intro-HE}\Lambda_{\omega_B} F_h = \lambda \Id,\end{equation} where $\lambda$ is the appropriate topological constant. Hermite-Einstein metrics are unique, when they exist, and so from our perspective give a \emph{canonical} choice of fibrewise Fubini-Study metric on $\pr(E)$ \cite{donaldson-surfaces}.

The goal of this paper is to introduce a vast generalisation of the notion of a Hermite-Einstein metric which makes sense on a much wider class of smooth fibrations (that is, holomorphic submersions) $\pi: X \to B$, and to use this to give a new construction of extremal K\"ahler metrics on the total space $X$ of such fibrations. This equation, which we call the \emph{optimal symplectic connection equation}, makes sense for fibrations for which each fibre admits a constant scalar curvature K\"ahler metric. We expect the theory of this new analogue of the Hermite-Einstein condition to be as deep as that of Hermite-Einstein metrics; in particular, we conjecture that the existence of solutions of this partial differential equation is equivalent to an algebro-geometric stability condition, extending the classical notion of slope stability of vector bundles.

\subsection{Optimal symplectic connections} We begin by explaining the new equation. The starting point is the observation that a Fubini-Study metric $\omega_{FS}$ on projective space $\pr^m$ is \emph{K\"ahler-Einstein}: it satisfies $\Ric\omega_{FS} = (m+1)\omega_{FS}$. The K\"ahler-Einstein condition makes sense on manifolds with positive, negative or trivial first Chern class, and the appropriate generalisation to arbitrary K\"ahler manifolds is to condsider \emph{constant scalar curvature K\"ahler} (cscK) metrics. 

Now let $\pi: X\to B$ be a smooth fibration such that $H$ is a relatively ample line bundle on $X$ and $L$ is an ample line bundle on $B$. We allow the fibres to be non-biholomorphic.  Suppose $\omega_X \in c_1(H)$ is a relatively K\"ahler metric which is cscK on each fibre $(X_b,H_b)$, and let $\omega_B \in c_1(L)$ be a K\"ahler metric on $B$. When the fibres admit automorphisms, such a choice of $\omega_X$ is not unique (even up to the pullback of a form from $B$), but rather is unique up to fibrewise automorphisms, mirroring the situation for projective bundles \cite{donaldson-scalar-curvature,berman-berndtsson}. Thus a natural question is whether or not there is a \emph{canonical} choice of such $\omega_X$. The answer to this question will be phrased in terms of various geometric objects associated to $\omega_X$; we now briefly describe the necessary terminology. More precise details are given in Section \ref{sec:optimal}.

Since $\pi: X \to B$ is a smooth fibration, the vertical bundle $\scV = \ker d\pi$ is a holomorphic vector bundle. The relatively K\"ahler metric $\omega_X$ determines a horizontal subbundle $\scH \subset TX$ via $$\scH_x = \{ u \in T_xX\ |\ \omega_0(u,v) = 0 \textrm{ for all } v \in \scV\}.$$ In this context, $\omega_X$ is usually called a \emph{symplectic connection} \cite[Section 6]{symplectic-topology}. There is a notion of the \emph{symplectic curvature} $F_{\scH}$ of $\omega_X$, which is simply the curvature of the associated Ehresmann connection. Using the fibrewise comoment map $\mu^*$, one can think of $\mu^*(F_{\scH})$ as a two-form on $B$ with values in functions on the fibres. The relatively K\"ahler metric $\omega_X$ also induces a metric on the top exterior power $\wedge^m\scV$, whose curvature we denote $\rho$. The restriction of $\rho$ to any fibre $X_b$ is the Ricci curvature of $\omega_X|_{X_b}$, but $\rho$ will also have a horizontal component $\rho_{\scH}$ under the horizontal-vertical decomposition of forms induced by $\omega_X$. On each fibre $\omega_X$ determines a Laplacian operator; piecing these together fibrewise determines a vertical Laplacian operator $\Delta_{\scV}$ on functions on $X$.

The main additional assumption we make is that the dimension of the automorphism group $\Aut(X_b,H_b)$ of the fibre $(X_b,H_b)$ is independent of $b \in B$. Under this assumption, one can form a smooth vector bundle $E$ whose fibres $E_b$ consist of functions which are holomorphy potentials on $X_b$: functions $\phi \in C^{\infty}(B)$  which satisfy the equation $\bar\partial \nabla^{1,0}\phi =0$, which simply says that (the $(1,0)$-part of) their gradient is a holomorphic vector field. The relatively K\"ahler metric $\omega_X$ determines a fibrewise $L^2$-inner product on functions on $X$, which in turn defines an orthogonal projection $p: C^{\infty}(X) \to E$.

The final notation needed is the horizontal contraction $\Lambda_{\omega_B}$, which sends horizontal two-forms to functions by contracting with respect to $\omega_B$. With all of this in place, we say that $\omega_X$ defines an \emph{optimal symplectic connection} if  \begin{equation}\label{eq:intro-equation}p(\Delta_{\scV}(\Lambda_{\omega_B} \mu^*(F_{\scH})) + \Lambda_{\omega_B} \rho_{\scH})=0.\end{equation} The first remark to make is that, although not obvious at first glance, this equation is indeed equivalent to the Hermite-Einstein condition of equation \eqref{intro-HE} on projective bundles. The second remark is that, fixing any reference $\omega_X$, by uniqueness of cscK metrics up to automorphisms, any other fibrewise cscK metric is determined by a fibrewise holomorphic vector field, and one can identify such an object with a smooth global section of $E$. Thus one can think of this equation as a nonlinear partial differential equation on the bundle $E$. The third remark is that, just as the constant scalar curvature equation simplifies to the K\"ahler-Einstein condition on Fano manifolds, we shall show that the optimal symplectic connection equation reduces to an equation only involving the symplectic curvature $$p(\Delta_{\scV}(\Lambda_{\omega_B} \mu^*(F_{\scH})) + \Lambda_{\omega_B}  \mu^*(F_{\scH}))=0$$ on Fano fibrations, which is to say when all fibres are Fano manifolds. The main result regarding this equation that we establish in the present work concerns its linearisation, which we show is a very natural second order elliptic operator on the bundle $E$, whose kernel can be thought of as fibrewise holomorphic vector fields which are \emph{globally} holomorphic on $X$. 

Before proceeding to the main result of this paper, a construction of extremal metrics on the total space of such fibrations $\pi: X \to B$, we formulate some conjectures regarding existence and uniqueness of optimal symplectic connections. The first, and most important, relates to algebraic geometry:

\begin{conjecture}\label{Hitchin-Kobayashi} A fibration $\pi: (X,H) \to (B,L)$ admits an optimal symplectic connection if and only if it satisfies some algebro-geometric stability condition. \end{conjecture}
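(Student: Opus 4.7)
The plan is to follow the template of the Donaldson--Uhlenbeck--Yau theorem in the Hermite--Einstein setting and of the Yau--Tian--Donaldson programme in the cscK setting, adapting both to the fibration structure. The first task is not a proof but a formulation question: one must identify the correct notion of stability. A natural candidate is a slope-type invariant attached to certain relative torsion-free subsheaves of $\pi_*(rH - \pi^*L)$ for $r \gg 0$, mimicking the way slope stability of $E$ is encoded in the Hilbert polynomials of its subsheaves; a more ambitious candidate is a relative K-stability notion, where one restricts to test configurations $(\mathcal{X},\mathcal{H}) \to B\times \mathbb{A}^1$ compatible with the submersion to $B$ and weights a suitable Donaldson--Futaki invariant using data from $(B,\omega_B)$. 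Any reasonable candidate should specialise to slope stability of $E$ in the projective bundle case.

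For the forward direction, that existence implies stability, the strategy is the familiar obstruction argument. Equation~\eqref{eq:intro-equation} should be the Euler--Lagrange equation of a functional $\mathcal{M}$ defined on the space of fibrewise cscK relatively K\"ahler metrics, generalising Mabuchi's $\mathcal{K}$-energy and the Donaldson functional for bundles. Given a destabilising test configuration, one constructs an approximating path of relative metrics, expands $\mathcal{M}$ along this path, and identifies the leading coefficient with the proposed Futaki-type invariant. Positivity of the Hessian of $\mathcal{M}$ in the direction of automorphisms, together with convexity along geodesics in the space of relative cscK metrics (an analogue of Berman--Berndtsson convexity), then forces the invariant to be non-negative.

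For the converse, that stability implies existence, I would run a continuity method on the bundle $E$. Since the linearisation is elliptic on $E$ with kernel equal to the globally holomorphic vector fields on $X$ that are fibrewise holomorphy potentials, openness of the set of parameters for which a solution exists is essentially automatic once the global automorphism group is properly accounted for. Closedness is the analytic heart of the problem: one needs $C^0$ and higher-order a priori estimates on sections of $E$ solving the family of perturbed equations, and stability should enter precisely as properness of $\mathcal{M}$ modulo the action of $\Aut(X,H)$, in the spirit of Chen--Cheng for cscK metrics. In good cases one might instead appeal to a gradient flow analogous to the Yang--Mills flow, whose long-time convergence would be governed by the same properness.

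The main obstacle, in my view, is twofold. Analytically, the equation is fourth order in the K\"ahler potential, nonlinear, and couples fibrewise behaviour to global data on $B$, so that neither the vector-bundle techniques of Uhlenbeck--Yau nor the purely cscK techniques transfer directly; in particular the vertical Laplacian $\Delta_{\scV}$ degenerates in the horizontal directions, and controlling solutions requires carefully balancing fibre and base estimates. Conceptually, the deeper difficulty is the standing assumption that $\dim \Aut(X_b,H_b)$ is constant in $b$; realistic examples will force this to jump on a proper analytic subset of $B$, and a truly satisfying Hitchin--Kobayashi statement must accommodate this, likely by replacing $E$ with a reflexive sheaf and by enriching the stability condition to detect the jumping loci.
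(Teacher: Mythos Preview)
The statement you are addressing is a \emph{conjecture}, not a theorem: the paper does not prove it, and there is no ``paper's own proof'' to compare against. The authors explicitly present this as an open problem, an analogue of the Hitchin--Kobayashi correspondence whose resolution they \emph{expect} but do not establish; they even remark that they ``plan to return to each of these topics in the future.'' So the comparison task is moot.

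As for the content of your proposal, it is not a proof but a research programme, and you are candid about this (you speak of ``candidates'' for the stability notion, say estimates ``should'' follow from properness, and end by listing the main obstacles). Nothing you write is wrong as a sketch of how one might attack the problem, and indeed the outline mirrors the standard template from the Hermite--Einstein and cscK settings. But none of the essential steps is carried out: the stability notion is not defined, the functional $\mathcal{M}$ is not constructed, convexity and properness are asserted rather than proved, and the a priori estimates for closedness are left entirely open. This is a plausible roadmap, not a proof, and the paper makes no claim to have travelled it.
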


\noindent This should be thought of an analogue of the Hitchin-Kobayashi correspondence of Donaldson-Uhlenbeck-Yau \cite{donaldson-infinite,uhlenbeck-yau}, which relates the existence of Hermite-Einstein metrics to slope stability of the bundle. In particular, we expect that the existence of solutions of the optimal symplectic connection equation should be independent of choice of $\omega_B$ in its class. The next main conjecture relates to uniqueness:

\begin{conjecture} If $\omega_X$ and $\omega_X'$ are two optimal symplectic connections, then up to pullback from $B$ they are related by an automorphism $g \in \Aut(X,H)$ which satisfies $\pi \circ g = \pi$: there is a form $\beta$ on $B$ such that $$g^*\omega_X = \omega_X' + \pi^*\beta.$$ \end{conjecture}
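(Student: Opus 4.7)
The plan is to adapt the strategy used for uniqueness of cscK metrics, combining a fibrewise uniqueness statement with an integrability argument for the horizontal direction coming from the full optimal symplectic connection equation.

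First I would carry out a fibrewise reduction. Restricted to any fibre $X_b$, both $\omega_X|_{X_b}$ and $\omega_X'|_{X_b}$ are cscK metrics in the class $c_1(H_b)$, so by the Berman--Berndtsson uniqueness theorem for cscK metrics in the presence of automorphisms there exists $g_b \in \Aut(X_b, H_b)$ with $g_b^* \omega_X|_{X_b} = \omega_X'|_{X_b}$. By the standing assumption that $\dim \Aut(X_b, H_b)$ is independent of $b$, these automorphism groups assemble into a smooth fibre bundle $\scG \to B$. After fixing a normalisation (for instance by choosing $g_b$ to lie in a slice transverse to the fibrewise isometry group of the cscK metric), one expects $b \mapsto g_b$ to depend smoothly on $b$, producing a smooth fibre-preserving diffeomorphism $g\colon X \to X$ which restricts to a holomorphic automorphism of each fibre.

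The second step is to upgrade $g$ to a global holomorphic automorphism and produce the asserted form $\beta$. After replacing $\omega_X$ by $g^*\omega_X$ we may assume $\omega_X$ and $\omega_X'$ agree on every fibre. Their difference can then be written as $\omega_X' - \omega_X = \pi^*\beta + \ddc \phi$ for a basic two-form $\beta$ and a function $\phi$ which is fibrewise a Kähler potential with vanishing fibrewise average. Using the vanishing on fibres and the fact that both satisfy the optimal symplectic connection equation, one derives a PDE for $\phi$ on $X$ which, to leading order, places $\phi$ in the kernel of the linearisation of the equation at $\omega_X$. The result on the linearisation stated earlier in the excerpt identifies this kernel with fibrewise holomorphy potentials that integrate to globally holomorphic vector fields, and flowing along such a vector field promotes the fibrewise $g$ to an element of $\Aut(X,H)$, absorbing the remainder into the pullback $\pi^*\beta$.

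The main obstacle is the global integrability: the fibrewise automorphisms $g_b$ assemble only into a holomorphic-on-fibres map, and seeing that the horizontal part of the optimal symplectic connection equation forces holomorphy on all of $X$ is genuinely delicate. The cleanest route is likely to realise the equation as the vanishing of a moment map for the action of the group of fibre-preserving Hamiltonian symplectomorphisms on a suitable space of compatible complex structures, and then prove convexity of a fibred Mabuchi-type functional along weak $C^{1,1}$-geodesics of relatively Kähler metrics, in direct analogy with Berman--Berndtsson. The construction and regularity of such geodesics, which amounts to solving a degenerate complex Monge--Ampère equation on the total space with data varying along $B$, is the hardest analytic input and the principal step where the argument would require substantial new technology beyond what is developed in this paper.
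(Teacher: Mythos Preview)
The statement you are attempting to prove is labelled a \emph{Conjecture} in the paper, and the paper contains no proof of it; the authors write explicitly that they ``plan to return to each of these topics in the future.'' So there is no proof in the paper against which to compare your proposal.

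Your sketch is a plausible outline of how one might attack the conjecture, and you correctly identify the two main steps (fibrewise uniqueness via Berman--Berndtsson, then a global integrability argument) as well as the principal difficulty (promoting the fibrewise automorphisms to a global holomorphic automorphism of $X$). However, what you have written is not a proof: the passage from ``$\phi$ lies, to leading order, in the kernel of the linearisation'' to ``flowing along the corresponding vector field absorbs the remainder'' hides exactly the nonlinear analysis that makes the conjecture hard, and you acknowledge as much in your final paragraph by appealing to a convexity/geodesic argument that is not carried out. In particular, even granting the moment-map interpretation, the existence and regularity of the relevant fibred geodesics and the strict convexity of a fibred Mabuchi functional are genuinely open inputs here. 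So your proposal should be read as a strategy rather than a proof, which is consistent with the paper's treatment of the statement as a conjecture.
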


\noindent Thus, optimal symplectic connections should, as desired, give a canonical choice of fibrewise cscK metric on fibrations. 

Finally, we conjecture that the optimal symplectic connection condition arises from some infinite dimensional moment map picture, in a similar manner as to how the Hermite-Einstein condition arises. We plan to return to each of these topics in the future.

\subsection*{Extremal metrics on fibrations} One of the most important problems in K\"ahler geometry is to understand the existence of extremal metrics on K\"ahler manifolds, which give a canonical choice of K\"ahler metric when they exist. The existence of such metrics is a very challenging problem, closely related to algebraic geometry through the Yau-Tian-Donaldson conjecture \cite{donaldson-survey}, and there are still rather few constructions of extremal metrics available. 

We now explain how we use optimal symplectic connections to produce extremal metrics on the total space of fibrations. We shall state three theorems in this direction, of increasing generality, and beginning with the simplest case of fibrations $\pi: (X,H) \to (B,L)$ such that all fibres are isomorphic,  which is the situation closest to that of projective bundles. In addition we assume that the base $(B,L)$ as well as the total space $(X,H)$ admit no automorphisms. 

\begin{theorem}\label{intro:hongstyle} Suppose $(B,L)$ admits a cscK metric $\omega_B$ and $\pi: (X,H) \to (B,L)$ admits an optimal symplectic connection $\omega_X$ solving equation \eqref{eq:intro-equation}. Then there exists a cscK metric in the class $kc_1(L)+c_1(H)$ for all $k \gg 0$. \end{theorem}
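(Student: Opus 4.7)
The plan is an adiabatic limit construction, extending the strategy Hong applied to projective bundles equipped with Hermite-Einstein metrics. I would set $\omega_k = \omega_X + k\pi^*\omega_B$, a Kähler form in $c_1(H)+kc_1(L)$ for $k \gg 0$, and expand the scalar curvature in inverse powers of $k$:
\[
S(\omega_k) = S_0 + k^{-1} S_1 + k^{-2} S_2 + \cdots,
\]
where $S_0$ is the fibrewise scalar curvature of $\omega_X|_{X_b}$ — a topological constant independent of $b$ by the fibrewise cscK assumption — and the higher $S_i$ are built from horizontal data such as $\mu^*(F_{\scH})$, $\rho_{\scH}$ and the scalar curvature of $\omega_B$. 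Note that no global Futaki-type obstruction appears, since by hypothesis $\Aut(X,H)$ and $\Aut(B,L)$ are trivial.

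The next step is to construct, iteratively in $r$, approximate solutions of the form
\[
\omega_{k,r} = \omega_k + \ddc \sum_{i=1}^{r} k^{-i} \phi_{k,i}
\]
whose scalar curvature agrees with a constant up to errors of order $k^{-r-1}$. At each order the task reduces to a linear equation whose leading part, under the adiabatic rescaling, is the fibrewise Lichnerowicz operator. Because the fibres carry non-trivial automorphisms, this fibrewise operator has kernel equal to the bundle $E$ of holomorphy potentials, so the equation is solvable only once the fibrewise $L^2$-projection $p$ of the right hand side has been annihilated. The critical first obstruction, arising at order $k^{-1}$, is, after a direct computation, exactly $p\bigl(\Delta_{\scV}(\Lambda_{\omega_B}\mu^*(F_{\scH})) + \Lambda_{\omega_B}\rho_{\scH}\bigr)$ — that is, the left hand side of the optimal symplectic connection equation \eqref{eq:intro-equation}, which vanishes by hypothesis. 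At subsequent orders further obstructions in $E$ appear; I would absorb each by an additional correction which is a section of $E$, which is possible provided the linearisation of the optimal symplectic connection equation on $E$ is invertible. The excerpt already indicates that this linearisation is a natural second-order elliptic operator on $E$ whose kernel consists of \emph{globally} holomorphic vector fields, and so under the no-automorphisms assumption it is an isomorphism.

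The hard part will be upgrading the formal approximate solutions to an honest cscK metric via an implicit function theorem. The difficulty is the adiabatic degeneration of the linearised scalar curvature operator $\mathcal{L}_k$ at $\omega_k$: a large family of eigenvalues of $\mathcal{L}_k$ collapses to zero as $k \to \infty$, precisely because the fibrewise Lichnerowicz operator has non-trivial kernel $E$. One must therefore prove a uniform lower bound on $\mathcal{L}_k$ in norms adapted to the horizontal-vertical splitting, with the correct scaling behaviour in $k$. The bound should split into two pieces: on functions $L^2$-orthogonal to $E$ the fibrewise Lichnerowicz operator supplies a uniform bound with the expected adiabatic weight, whereas on sections of $E$ the bound is supplied by the linearisation of the optimal symplectic connection equation, together with the linearised cscK equation on $(B,\omega_B)$, which is invertible since $\Aut(B,L)$ is trivial. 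With this quantitative control established, the Newton iteration applied to $\omega_{k,r}$ for $r$ chosen sufficiently large will converge, yielding a cscK metric in $c_1(H)+kc_1(L)$ for all $k \gg 0$.
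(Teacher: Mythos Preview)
Your proposal is correct and follows essentially the same route as the paper: the three-way splitting $C^\infty(X)\cong C^\infty(B)\oplus C^\infty_E(X)\oplus C^\infty_R(X)$, with the base piece handled by the linearised cscK operator on $B$, the $E$-piece by the linearisation $p\circ\L_1$ of the optimal symplectic connection operator, and the $R$-piece by the fibrewise Lichnerowicz operator, followed by a quantitative implicit function theorem with an $O(k^3)$ bound on the right inverse. One refinement your ansatz $\omega_k+\ddc\sum_i k^{-i}\phi_{k,i}$ obscures: the paper adds the three types of correction at \emph{different} powers of $k$---base functions $f_j$ at $k^{2-j}$, sections $d_j$ of $E$ at $k^{1-j}$, and $R$-functions $l_j$ at $k^{-j}$---because the linearised scalar curvature hits these pieces with leading term at order $k^{-2}$, $k^{-1}$, $k^0$ respectively, and the corrections at each step must be made in the order base, then $E$, then $R$, since each earlier correction introduces new errors of the later types at the same order.
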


\noindent As mentioned above, the optimal symplectic connection equation specialises to the Hermite-Einstein condition on projective bundles, and so this result generalises work of Hong \cite{hong1}. When the fibres are integral coadjoint orbits, with the fibration induced by a holomorphic principal bundle, we similarly show that the optimal symplectic connection equation reduces to a Hermite-Einstein equation on the principal bundle. This produces new examples of cscK manifolds, and answers a question of Br\"onnle  \cite[Section 2.3]{bronnle}. 

When the fibres are not biholomorphic, the statement involves some moduli theory. Since each fibre is assumed to admit a cscK metric, there is a moduli space $\M$ of cscK manifolds together with a moduli map $q: B \to \M$ such that $b\in B$ maps to the point associated to $(X_b,L_b)$ in $\M$. When the fibres have \emph{discrete} automorphism group, the existence of $\M$ is a classical result of Fujiki-Schumacher \cite{fujiki-schumacher}. In the general case, the existence of such a moduli space is a recent result of the first author and Naumann \cite{DN} (see also \cite{lwx,odaka-moduli} for the special case of K\"ahler-Einstein Fano manifolds). The moduli space $\M$ carries a Weil-Petersson type K\"ahler metric $\Omega_{WP}$, and we denote $\alpha = q^*\Omega_{WP}$ \cite{fujiki-schumacher,DN}, which is a smooth semi-positive form on $B$. Our assumption on $(B,L)$ is that there is a solution $\omega_B \in c_1(L)$ of the \emph{twisted cscK equation} $$S(\omega_B) - \Lambda_{\omega_B} \alpha = const,$$ and that the automorphism groups of the moduli map $\Aut(q) = \{g \in \Aut(B,L): q \circ g = q\}$ and the manifold $(X,H)$ are discrete.

\begin{theorem}\label{intro:finestyle} Suppose $(B,L)$ admits a twisted cscK metric $\omega_B$ as above and $\pi: (X,H) \to (B,L)$ admits an optimal symplectic connection $\omega_X$ solving equation \eqref{eq:intro-equation}. Then there exists a cscK metric in the class $kc_1(L)+c_1(H)$ for all $k \gg 0$. \end{theorem}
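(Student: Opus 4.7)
The plan is an adiabatic limit argument in the tradition of Hong and Fine: consider the genuinely Kähler metrics $\omega_k = k\pi^*\omega_B + \omega_X$ for $k \gg 0$ in the class $kc_1(L) + c_1(H)$, and look for an exact cscK representative of the form $\omega_k + \ddc\phi_k$ with $\phi_k = \sum_{j\geq 1} k^{-j}\psi_j$. Expanding the scalar curvature $S(\omega_k + \ddc\phi_k)$ as a formal power series in $k^{-1}$, I would solve the equation $S = \hat{S}_k$ order by order, construct approximate solutions $\tilde\omega_k$ with error $O(k^{-N})$ for arbitrarily large $N$, and then close up by a quantitative inverse function theorem.

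Carrying out the expansion, the leading term of $S(\omega_k)$ is the fibrewise scalar curvature, which is already constant by assumption on $\omega_X$. At each subsequent order, the linearised operator splits in the $k\to\infty$ limit into the fibrewise Lichnerowicz operator $\mathcal{D}^*_b\mathcal{D}_b$ of $\omega_X|_{X_b}$, whose kernel is precisely the bundle $E_b$ of fibrewise holomorphy potentials, plus horizontal contributions. Solving for $\psi_j$ at each order therefore requires two successive obstructions to vanish: the $L^2$-projection of the right hand side onto $E$, and then, after solving in the $E^\perp$-direction, the average over the fibres on the base. The first nontrivial such obstruction is exactly the optimal symplectic connection equation \eqref{eq:intro-equation}, which holds by hypothesis; the resulting base-level equation, after a Weil--Petersson type computation identifying the horizontal Ricci contribution with $q^*\Omega_{WP} = \alpha$, becomes the twisted cscK equation on $(B,L)$, which also holds by hypothesis. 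The linearised operator governing the inductive step is then a direct sum of the twisted Lichnerowicz operator on the base (invertible on $\ker^{\perp}$ because $\Aut(q)$ is discrete) and the linearisation of the optimal symplectic connection equation on $E$ (invertible modulo its kernel, which by the main linearisation result stated in the introduction coincides with globally holomorphic vector fields on $X$, hence is trivial since $\Aut(X,H)$ is discrete). Iterating yields approximate solutions to any order.

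The final step is to perturb an approximate solution $\tilde\omega_k$, with scalar curvature error $O(k^{-N})$, to a genuine cscK metric by Newton iteration applied to the Lichnerowicz operator of $\tilde\omega_k$ on appropriate Sobolev spaces. I expect this to be the main obstacle: the operator $\mathcal{D}^*\mathcal{D}_{\omega_k}$ degenerates in the adiabatic limit, and one must show that its smallest nonzero eigenvalue decays at most polynomially in $k^{-1}$ uniformly in a neighbourhood of $\tilde\omega_k$. The standard way to control this is to decompose $C^\infty(X)$ into the pullback from the base, sections of $E$, and the $L^2$-orthogonal remainder, and to show that on each piece the operator is governed by the invertible model operators identified above, with sharp constants independent of $k$. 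Balancing these bounds against the error size dictates how large $N$ must be taken, and produces the genuine cscK metric for all sufficiently large $k$.
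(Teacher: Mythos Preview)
Your proposal is correct and follows the same strategy as the paper: the three-way splitting $C^\infty(X) \cong C^\infty(B) \oplus C^\infty_E(X) \oplus C^\infty_R(X)$, identification of the leading obstructions with the optimal symplectic connection and twisted cscK equations, the inductive approximation scheme governed by the linearisations of these equations (invertible under the discreteness hypotheses on $\Aut(X,H)$ and $\Aut(q)$), and closure via the quantitative implicit function theorem with polynomial bounds on the right inverse. The one subtlety you gloss over, which the paper makes explicit and flags as a source of error in prior work, is that the corrections in $C^\infty(B)$, $C^\infty_E(X)$, and $C^\infty_R(X)$ must enter at different powers of $k$ (namely $k^{2-j}$, $k^{1-j}$, $k^{-j}$, since the linearised operator annihilates base and $E$ functions to leading and subleading order respectively) and must be applied in that order at each stage, because correcting the base piece can introduce new vertical errors at the same order but not conversely.
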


\noindent This result extends work of Fine \cite{fine1} in the case that the fibres all have discrete automorphism group, and also generalises Theorem \ref{intro:hongstyle}. In Fine's case, the cscK metrics on the fibres are unique, and so the optimal symplectic connection condition is vacuous. Strictly speaking, the result of Fine also requires an assumption on the invertibility of the linearisation of the twisted scalar curvature: this was removed by the authors \cite{fibrations}.

We now turn to the case when the moduli map and $(X,H)$ itself are allowed to admit automorphisms. In this situation, it is more general and appropriate to consider extremal metrics. Recall that a K\"ahler metric $\omega$ is \emph{extremal} if its scalar curvature $S(\omega)$ is a holomorphy potential: $$\bar\partial\nabla^{(1,0)}S(\omega) = 0.$$ There is a corresponding notion of a twisted extremal metric, where one requires that the function $S(\omega_B) - \Lambda_{\omega_B} \alpha$ is a holomorphy potential on $B$. The extremal analogue of the optimal symplectic connection is what we call an \emph{extremal symplectic connection}: these satisfy the condition that the function $$p(\Delta_{\scV}(\Lambda_{\omega_B} \mu^*(F_{\scH})) + \Lambda_{\omega_B} \rho_{\scH})$$ on $X$ is the holomorphy potential of a global holomorphic vector field on $X$, in a sense which will be made more precise in Section \ref{sec:optimal}. This condition generalises the condition that a holomorphic vector bundle is the direct sum of two different slope stable subbundles potentially of different slopes, so that it admits a direct sum of Hermite-Einstein metrics. It will be necessary to assume that $\omega_X$ is invariant under a certain compact group of automorphisms of $(X,H)$; we refer to Remark \ref{newtorus} for the precise requirement. Here we simply mention that we expect that this requirement is automatically satisfied for any extremal symplectic connection, and that when $\omega_X$ is an optimal symplectic connection and $\omega_B$ is a twisted cscK metric, the invariance condition is vacuous.

\begin{theorem} Suppose $(B,L)$ admits a twisted extremal metric $\omega_B$ as above and all automorphisms $\Aut(q)$ of the moduli map lift to automorphisms of $(X,H)$. Suppose in addition $\pi: (X,H) \to (B,L)$ admits an extremal symplectic connection $\omega_X$. Then there exists an extremal metric in the class $kc_1(L)+c_1(H)$ for all $k \gg 0$. \end{theorem}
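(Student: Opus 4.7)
The plan is to prove this via a perturbation argument, starting from the approximate metric $\omega_k = k\pi^*\omega_B + \omega_X$ on $X$, improving it to an approximate extremal metric order-by-order in $k^{-1}$, and then closing with a quantitative implicit function theorem. This extends to the extremal setting the strategy of Hong, Fine, and the authors' earlier work on fibrations, with the new ingredient being the extremal symplectic connection which supplies the obstruction term at one of the crucial orders.

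First, I would formally solve the extremal equation to arbitrary order in $k^{-1}$. Writing $\omega_{k,\phi} = \omega_k + i\partial\bar\partial \sum_{i\geq 1} k^{-i}\phi_i$, one seeks $\phi_i$ so that $S(\omega_{k,\phi}) - h_k$ vanishes to any prescribed order, where $h_k$ is a holomorphy potential on $(X,\omega_{k,\phi})$. Expanding $S$ asymptotically in $k^{-1}$, the leading term is the fibrewise scalar curvature, which is constant by hypothesis. The obstruction at the next order splits into a base contribution which, after fibre integration, is precisely the twisted scalar curvature $S(\omega_B) - \Lambda_{\omega_B}\alpha$ of the base (the twisting by $\alpha$ arising from a Weil-Petersson computation as in the authors' earlier work), and a fibre contribution governed by the left-hand side of the extremal symplectic connection equation; by hypothesis, both are holomorphy potentials and can be absorbed into $h_k$. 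At every subsequent order one must solve a linear equation whose principal symbol decomposes into the twisted Lichnerowicz operator on $B$, the fibrewise Lichnerowicz operator on each fibre, and the linearisation of the optimal symplectic connection equation on the bundle $E$; each of these is invertible modulo its kernel, which consists of holomorphy potentials that are absorbed into the extremal data. The automorphism hypotheses are used precisely here: the lifting assumption on $\Aut(q)$ ensures that every holomorphy potential on $(B,\omega_B)$ pulls back to a holomorphy potential on $(X,\omega_{k,\phi})$, so twisted extremal data on the base matches extremal data on the total space; the invariance assumption on $\omega_X$ from Remark \ref{newtorus} ensures a compact torus of symmetries is preserved throughout, which is needed to identify kernels and to control the linearised operator.

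To pass from the formal solution to a genuine extremal metric, I would apply a quantitative implicit function theorem in suitable Hölder spaces, taking a sufficiently high-order truncation of the formal solution as the initial guess. The main obstacle, and the technical heart of the proof, is to establish a uniform-in-$k$ bound on the norm of a right inverse to the linearised extremal operator modulo holomorphy potentials. The linearisation degenerates as $k\to\infty$, with eigenvalues scaling differently in the base, fibre, and mixed directions, so the estimate demands a decomposition of functions via the fibrewise $L^2$-projection $p : C^\infty(X) \to E$ and its orthogonal complement, with each block controlled separately using invertibility of the fibrewise Lichnerowicz operator on $E^\perp$, the linearisation of the optimal symplectic connection equation on $E$ modulo global holomorphy potentials, and the twisted Lichnerowicz operator on $B$. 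Once this uniform inverse is in hand, standard nonlinear estimates combined with the implicit function theorem deliver the extremal metric in $kc_1(L)+c_1(H)$ for all $k \gg 0$.
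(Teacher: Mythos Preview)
Your proposal is correct and follows essentially the same route as the paper: an approximation scheme to arbitrary order in $k^{-1}$ via Theorem \ref{thm:extremalapprox}, followed by the quantitative implicit function theorem of Section \ref{sec:implicit}. One small imprecision: the right inverse of the linearised extremal operator is not bounded uniformly in $k$ but only polynomially (of order $k^3$ in the paper's Sobolev norms), which is why the approximate solution must be taken to sufficiently high order $r$; also, the paper works in $L^2_p$ rather than H\"older spaces, though this is immaterial.
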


\noindent It follows that $(X,kL+H)$ admits a cscK metric if and only if the Futaki invariant vanishes. The situation where $X$ is the projectivisation of a direct sum of vector bundles was considered by Br\"onnle \cite{bronnle}, so that the extremal symplectic connection is induced by the Hermite-Einstein metrics, who proved the above in that situation when the base $(B,L)$ admits no automorphisms. When $E$ is simple (so the extremal symplectic connection is necessarily induced by a genuine Hermite-Einstein metric), but the base is allowed to admit an extremal metric, this is due to Lu-Seyyedali \cite{lu-seyyedali}. In the case that the fibres all have discrete automorphism group, so that the extremal symplectic connection is vacuous, this is a result of the authors \cite{fibrations}. Our result generalises all of these results, in particular it is new even in the case of projective bundles. A main point of our previous work \cite{fibrations} was that when the fibres have discrete automorphism group, one can use some moduli-theoretic arguments to show that the hypothesis that all automorphisms $\Aut(q)$ lift to $(X,H)$ is automatically fulfilled. In the situation under consideration here, this \emph{is} necessary, as one can see already for projective bundles, where the necessary condition is that the bundle is $\Aut(B,L)$-equivariant. 

It is natural to ask if there is an analogue of these results when the fibres admit extremal metrics rather than cscK metrics. We expect this to be the case, however such a result would require new ideas in the moduli theory of extremal K\"ahler manifolds.

We expect our results to be almost sharp, in the following sense. Through Conjecture \ref{Hitchin-Kobayashi}, if the fibration does not admit an optimal symplectic connection, we expect that there should be some algebro-geometric object which detects this non-existence. For projective bundles, this object would be a subsheaf, and if the bundle is slope unstable, then the projectivisation is also K-unstable for the polarisations we consider \cite{ross-thomas}, so cannot admit a cscK metric. An analogous phenomenon would imply that the hypothesis on the existence of an optimal symplectic connection is almost sharp. We remark that the requirement on the existence of a twisted cscK metric on the base is known to be essentially sharp for a similar reason: if the moduli map is K-unstable (in a stacky sense), then it cannot admit a twisted cscK metric \cite{twisted}, and moreover the total space of the fibration $(X,kL+H)$ is K-unstable for $k \gg 0$ \cite[Theorem 1.3 and Remark 4.8]{stablemaps}, so $(X,kL+H)$ itself cannot admit a cscK metric \cite{donaldson-semistable}

The proof of our result uses an adiabatic limit technique. We begin with the twisted extremal metric $\omega_B$ and the optimal symplectic connection $\omega_X$ (or an extremal symplectic connection). Since $\omega_X$ is cscK when restricted to each fibre, the K\"ahler metric $k\omega_B +\omega_X$ (suppressing pullbacks) is an extremal metric to first order in $k$ as $k$ tends to infinity. To obtain an extremal metric to leading two orders requires us to use that the base metric is twisted extremal, and that the form $\omega_X$ is an optimal symplectic connection. From here, elliptic PDE theory comes into play. To inductively produce an extremal metric to any given order requires us to give a detailed understanding of the linearisation of the optimal symplectic curvature equation at a solution. We show that this can be thought of as a second order elliptic operator on the bundle $E$ of fibrewise holomorphy potentials, whose kernel can be identified with holomorphic vector fields lifting to $X$. This, in addition to the geometric understanding of the linearisation of the twisted extremal operator obtained in \cite{fibrations}, allows us to produce an extremal metric to all orders in $k$ using a delicate inductive procedure. The final step is to perturb to a genuine extremal metric, for which we use a quantitative version of the implicit function argument as in the previous work \cite{hong1,fine1, bronnle, lu-seyyedali,fibrations}.

\vspace{4mm} \noindent {\bf Outline:} We begin in Section \ref{sec:preliminaries} with some preliminary material on extremal and twisted extremal metrics, together with the aspects of moduli theory which we shall make use of. In Section \ref{sec:optimal} we develop the basic theory of optimal symplectic connections. The core of the proof, namely the study of the linearisation of the optimal symplectic operator and the inductive argument, is contained in Section \ref{sec:approx}. Finally, we apply the implicit function theorem to produce a genuine extremal metric in Section \ref{sec:implicit}.


\vspace{4mm} \noindent {\bf Notation:} Throughout $\pi: (X,H) \to (B,L)$ will denote a smooth fibration (that is, holomorphic submersion) $\pi: X \to B$ between compact K\"ahler manifolds, with $H$ a relatively ample line bundle on $X$ and $L$ an ample line bundle on $B$. $\pi$ will always have relative dimension $m$ and $B$ will have dimension $n$. All of our results go through in the non-projective case with $L$ and $H$ replaced by a relatively K\"ahler class and a K\"ahler class respectively, however for notational convenience we use the projective notation. The only exception is Section \ref{coadjoint} on the example of fibrations with fibres coadjoint orbits, where it seems necessary to require these to be integral coadjoint orbits. We typically suppress pullbacks via $\pi$, so that if $\omega_B$ is a K\"ahler metric on $B$, its pullback to $X$ will also be denoted $\omega_B$. We will typically choose a twisted extremal or twisted cscK metric $\omega_B \in c_1(L)$, with twisting a Weil-Petersson type metric, and also a relatively cscK metric $\omega_X \in c_1(H)$.

\section{Preliminaries}\label{sec:preliminaries} We summarise the foundational results regarding extremal metrics, fibrations and moduli theory that we shall require. The material on extremal metrics and holomorphic vector fields can be found in \cite{szekelyhidi-book}.

\subsection{Holomorphic vector fields}

Let $X$ be a compact complex manifold with $H$ an ample line bundle. We call $(X,H)$ a polarised manifold. We denote by $\mfh$ the Lie algebra of holomorphic vector fields on $X$ which vanish somewhere. The automorphism group $\Aut(X)$ of $X$ is a complex Lie group, and we denote by $\Aut(X,H)\subset \Aut(X)$ the Lie subgroup with Lie algebra $\mfh$. This can be seen as the group of automorphisms of $X$ which lift to $H$ \cite{lebrun-simanca}. We will also denote by $\Aut_0(X,H)$ the connected component of the identity.

Let $\omega \in c_1(H)$ be a K\"ahler metric with corresponding Riemannian metric $g$. Any holomorphic vector field $\xi \in \mfh$ can be written in the form $$\xi^j = g^{j\bar k} \partial_{\bar k} f$$ for some $f \in C^{\infty}(X,\C)$. We call such an $f$ a \emph{holomorphy potential}. Holomorphy potentials are unique up to the addition of a constant, and so if we denote by $\bar \mfh$ the vector space of holomorphy potentials, we have $$\dim \bar \mfh = \dim \mfh + 1.$$

Denote by $\scD_{\omega}: C^{\infty}(X,\C) \to \Omega^1(TX^{1,0})$ the operator $$\scD_{\omega}(f) = \bar \partial \nabla^{1,0}f.$$ The \emph{Lichnerowicz operator} is defined to be $\scD_{\omega}^*\scD_{\omega}$, where $\scD_{\omega}^*$ denotes the adjoint with respect to the $L^2$-inner product. Excplicitly, this operator is given as \begin{equation}\label{eqn:lich}\mathcal{D}_{\omega}^* \mathcal{D}_{\omega} (\phi ) = \Delta_{\omega}^2 (\phi) + \langle \textnormal{Ric} (\omega) , \ddc \phi \rangle + \langle \nabla S (\omega), \nabla \phi \rangle,\end{equation} see \cite[p. 59]{szekelyhidi-book}. This is clearly a fourth order elliptic operator as its leading order term is $\Delta^2$. The kernel of the Lichnerowicz operator $\ker \scD_{\omega} = \ker \scD_{\omega}^*\scD_{\omega}$ then precisely corresponds to holomorphy potentials \cite[Definition 4.3]{szekelyhidi-book}, that is $$\ker \scD_{\omega}^*\scD_{\omega} = \bar \mfh.$$ Denote by $$C^{\infty}_0(X,\C) = \left\{f \in C^{\infty}(X,\C): \int_{X} f\omega^n = 0\right\}$$ the functions which integrate to zero over $X$, and let $\ker \scD_{\omega}^*\scD_{\omega}$ denote the kernel of $\scD_{\omega}^*\scD_{\omega}$ on this subspace. We then obtain a natural isomorphism $$\ker _0\scD_{\omega}^*\scD_{\omega} \cong \mfh.$$

\subsection{Extremal K\"ahler metrics} The \emph{Ricci curvature} of a K\"ahler metric $\omega$ can be defined as $$(2\pi) \Ric \omega = {-i}\partial\bar\partial \log \omega^n,$$ where one views $\omega^n$ as a Hermitian metric on $-K_X = \det TX^{1,0}$. In turn, one defines the \emph{scalar curvature} $$S(\omega) = \Lambda_{\omega} \Ric \omega$$ as the contraction of the Ricci curvature. 

\begin{definition} A K\"ahler metric $\omega$ is \begin{enumerate}[(i)]
\item \emph{K\"ahler-Einstein} if $\Ric\omega = \lambda \omega$ for some $\lambda \in \R$;
 \item a \emph{constant scalar curvature K\"ahler metric} (or \emph{cscK}) if $S(\omega)$ is constant; .
 \item \emph{extremal} if $S(\omega)$ is a holomorphy potential, which is to say $\scD_{\omega} S(\omega) = 0$.

 \end{enumerate}

 \end{definition}

\begin{remark}\label{KEorcscK}It is clear that every cscK metric is extremal, and that conversely every extremal metric is cscK if $\Aut(X,H)$ is discrete. Furthermore, every K\"ahler-Einstein metric is cscK, and if $\lambda \alpha = c_1(X) = c_1(-K_X)$ Hodge theory implies that every cscK metric is K\"ahler-Einstein \cite[p. 60]{szekelyhidi-book}.
\end{remark}

A fundamental result which we will require is that extremal metrics are actually unique in their K\"ahler class up to automorphisms, in the following sense.

\begin{theorem}\cite{donaldson-scalar-curvature,berman-berndtsson}\label{thm:uniqueness} If $\omega, \omega' \in c_1(H)$ are extremal, then there is a $g\in \Aut_0(X,H)$ with $\omega = g^*\omega'$. \end{theorem}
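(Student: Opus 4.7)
The plan is to follow the Mabuchi--Donaldson strategy of geodesic convexity for a modified K-energy, as carried out in the extremal setting by Berman--Berndtsson.

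I would work on the space of K\"ahler potentials $\mathcal{H}_\omega = \{\phi \in C^\infty(X,\R) : \omega + \ddc \phi > 0\}$, parametrising all metrics in $c_1(H)$ up to additive constants. On this space one has the Mabuchi $L^2$ Riemannian structure together with a relative (\emph{modified}) K-energy functional $\mathcal{E}: \mathcal{H}_\omega \to \R$ whose critical points are exactly the extremal metrics in $c_1(H)$. The construction of $\mathcal{E}$ uses the \emph{extremal vector field}, a canonical $V \in \mfh$ whose holomorphy potential at any extremal representative is $S(\omega)$ minus its average and which, by Futaki--Mabuchi, is independent of the chosen metric in the class. Writing $\omega' = \omega + \ddc \psi$, the target is to produce $g \in \Aut_0(X,H)$ with $g^*\omega' = \omega$.

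Next, I would connect the two extremal metrics by a weak $C^{1,1}$ Mabuchi geodesic $\phi_t$, $t \in [0,1]$, with $\phi_0 = 0$ and $\phi_1 = \psi$; existence of such a geodesic is due to Chen with subsequent regularity from Chen--Tian, and $\phi_t$ arises as the solution on $X \times A$ (with $A \subset \C^*$ an annulus) of a degenerate homogeneous complex Monge--Amp\`ere equation. The central analytic input, and the step I expect to be the main obstacle, is the Berman--Berndtsson theorem that $\mathcal{E}$ is convex along $\phi_t$. Their proof proceeds through delicate pluripotential-theoretic approximations of the weak geodesic by smooth $\epsilon$-geodesics, combined with a careful analysis of the entropy and Ricci-type contributions to $\mathcal{E}$, and ultimately rests on fine properties of the complex Monge--Amp\`ere operator in low regularity.

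With convexity available, the two endpoints $\phi_0$ and $\phi_1$ are both critical points of a convex functional, which forces $\mathcal{E}$ to be affine along $\phi_t$. Analysing the equality case of the convexity inequality yields $\mathcal{D}_{\omega_{\phi_t}} \dot\phi_t = 0$ for every $t$, so by the identification $\ker \mathcal{D}_\omega = \bar\mfh$ recalled above, each $\nabla^{1,0}\dot\phi_t$ is a genuine holomorphic vector field on $X$. Integrating this time-dependent family produces a smooth one-parameter family $g_t \in \Aut_0(X,H)$ with $g_0 = \Id$ and $g_t^*\omega_{\phi_t} = \omega$; setting $g = g_1$ yields the desired automorphism.
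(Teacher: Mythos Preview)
The paper does not prove this theorem: it is simply quoted from the literature with citations to Donaldson and Berman--Berndtsson, and is used as a black box throughout. So there is no ``paper's own proof'' to compare against.

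Your outline is a faithful sketch of the Berman--Berndtsson argument and is correct in broad strokes. A couple of points would need more care if you were to flesh this out. First, the geodesic $\phi_t$ is only $C^{1,1}$, so the statement $\mathcal{D}_{\omega_{\phi_t}}\dot\phi_t = 0$ and the subsequent integration of the vector fields $\nabla^{1,0}\dot\phi_t$ cannot be read off directly; Berman--Berndtsson handle this by working with the $\epsilon$-geodesic approximations and showing the limiting geodesic is in fact generated by a one-parameter subgroup, which a posteriori upgrades regularity. Second, the modified K-energy in the extremal case requires fixing a maximal compact subgroup and working with invariant potentials so that the extremal vector field has a well-defined potential along the path; you implicitly use this when invoking the Futaki--Mabuchi extremal vector field. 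Neither of these is a gap in your strategy, but both are places where the honest work lies.
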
 \noindent As a simple example, the only K\"ahler-Einstein metrics on $\pr^n$ are the Fubini-Study metrics, which are unique up to automorphisms of $\pr^n$.

Let $\G$ be the linearisation of the scalar curvature operator, in the sense that if $\omega_t = \omega+ t\ddc \varphi$ is a family of K\"ahler metrics, then $$\G (\varphi) = \frac{dS(\omega_t)}{dt}\bigg\rvert_{t=0} \in C^{\infty}(X,\R).$$ Explicitly, one calculates \cite[Lemma 4.4]{szekelyhidi-book} $$\G (\varphi) = -\scD_{\omega}^*\scD_{\omega} + \frac{1}{2} \langle \nabla S (\omega), \nabla \phi \rangle.$$ Thus if $\omega$ is a cscK metric, this equals the Lichnerowicz operator.

Using the language of holomorphy potentials, the extremal condition can be written as requiring $$S(\omega) - f = 0,$$ for some $f \in \bar\mfh$ with corresponding vector field $\nu$. If we change $\omega$ to $\omega+\ddc \varphi$, then the holomorphy potential $f$ for $\nu$ changes by \begin{equation}\label{extremal-eqn}\nu(\varphi) = \frac{1}{2}\langle \nabla f, \nabla \phi\rangle,\end{equation} where the gradients and inner product are defined using $\omega$ (see for example \cite[Lemma 20]{bronnle}). Thus to find an extremal metric in $\beta$ requires us to find a $\varphi\in C^{\infty}(X,\R)$ such that $$S(\omega+\ddb \varphi) -  \frac{1}{2}\langle \nabla f, \nabla \phi\rangle - f=0.$$ The linearisation of this equation is obtained from $\scG$, as $\phi \to  \frac{1}{2}\langle \nabla f, \nabla \phi\rangle$ is linear. Explicitly, viewing the extremal operator as before as an operator $C^{\infty}(X,\R) \times \bar\mfh \to C^{\infty}(X,\R)$, and denoting by $\scL$ its linearisation, for $\omega$ an extremal metric and $f= S(\omega)$ we have $\scL(\phi,0) = -\scD_{\omega}^*\scD_{\omega}\phi.$

The final point which we shall require concerns the automorphism invariance of extremal metrics. An extremal metric $\omega$ has an associated holomorphic vector field $\nu$, and we define  $\Aut_{\nu}(X,H)\subset \Aut(X,H)$ to be the Lie subgroup of automorphisms commuting with the flow of $\nu$. Thus the Lie algebra of $\Aut_{\nu}(X,H)$ precisely consists of holomorphic vector fields which commute with $\nu$.

\begin{theorem}\label{thm:torus-invariance}\cite{calabi} Let $\omega \in c_1(H)$ be an extremal metric. Then the isometry group $\Isom_{\nu}(X,\omega) = \Isom(X,\omega) \cap \Aut_{\nu}(X,H)$ of $\omega$ is a maximal compact subgroup of $\Aut_{\nu}(X,H)$.

 \end{theorem}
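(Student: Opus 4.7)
The plan is to proceed in three stages: first, verify that $\Isom_\nu(X,\omega)$ is a compact subgroup of $\Aut_\nu(X,H)$; second, establish a Calabi-type decomposition of $\Lie(\Aut_\nu(X,H))$ as the complexification of the Killing fields commuting with $\nu$; third, deduce maximality from the Cartan decomposition of reductive complex Lie groups. For the first stage, $\Isom(X,\omega)$ is compact by the Myers--Steenrod theorem (as $X$ is compact), and $\Isom_\nu(X,\omega) = \Isom(X,\omega) \cap \Aut_\nu(X,H)$ is by definition a closed subgroup of the compact group $\Isom(X,\omega)$, hence compact; its containment in $\Aut_\nu(X,H)$ is tautological.

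The heart of the argument is stage two, where the extremal hypothesis enters essentially. The extremal equation $\scD_\omega S(\omega) = 0$ says $\nu = \nabla^{1,0}S(\omega)$ is a holomorphic vector field with \emph{real} holomorphy potential, so $\mathrm{Im}(\nu)$ is Killing for $\omega$ and in particular $\nu$ itself lies in the complexified Lie algebra of isometries at $\omega$. I would then establish the \emph{Calabi decomposition}
\[
\Lie(\Aut_\nu(X,H)) \;=\; \mathfrak{k}_\nu \;\oplus\; J\mathfrak{k}_\nu,
\]
where $\mathfrak{k}_\nu = \Lie(\Isom_\nu(X,\omega))$. Given $\xi \in \Lie(\Aut_\nu(X,H))$ with complex holomorphy potential $f = u + iv$, the task is to show $u$ and $v$ are separately holomorphy potentials of Killing vector fields. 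Applying the Lichnerowicz formula (2.1), the biharmonic and Ricci terms respect the real/imaginary splitting automatically; the transport term $\langle \nabla S(\omega), \nabla f \rangle$ does so as well, precisely because the commutation $[\xi,\nu]=0$ translates into a Poisson-bracket identity between $f$ and $S(\omega)$ that couples $u$ and $v$ in a controlled way. Combined with the real self-adjointness of $\scD_\omega^*\scD_\omega$, this forces both $u,v \in \ker \scD_\omega$ separately. This is Calabi's extension of the Matsushima--Lichnerowicz reductivity theorem from the cscK to the extremal setting.

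From the Calabi decomposition, $\Aut_\nu(X,H)$ is a connected reductive complex Lie group whose Cartan involution has fixed locus $\Isom_\nu(X,\omega)$. The standard structure theory of such groups then gives that any maximal compact subgroup is the fixed locus of a Cartan involution, with maximal compacts unique up to conjugation. Hence any compact subgroup of $\Aut_\nu(X,H)$ conjugates into $\Isom_\nu(X,\omega)$, and any such subgroup \emph{containing} $\Isom_\nu(X,\omega)$ must equal it. I expect the main obstacle to be the Calabi decomposition itself: verifying that the transport term in (2.1) respects the real/imaginary splitting of $f$ requires the commutation $[\xi,\nu]=0$ in an essential way, and is precisely why the statement must be phrased within $\Aut_\nu(X,H)$ rather than within the full $\Aut(X,H)$.
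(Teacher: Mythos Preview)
The paper does not prove this theorem; it is stated with a citation to Calabi as a preliminary result in Section~\ref{sec:preliminaries}, so there is no proof in the paper to compare against. Your outline is a correct sketch of Calabi's original argument: the decisive step is indeed the decomposition $\Lie(\Aut_\nu(X,H)) = \mathfrak{k}_\nu \oplus J\mathfrak{k}_\nu$, obtained by observing that the only non-real term in the Lichnerowicz operator is the transport term $iJ\nabla S(\omega)(\cdot)$, which is killed precisely by the commutation hypothesis $[\xi,\nu]=0$. (The paper in fact spells out this mechanism later, in the proof of Lemma~\ref{extremal-real}.)

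One minor point worth flagging: in stage three you assert that $\Aut_\nu(X,H)$ is connected, which is not automatic. The Calabi decomposition is a Lie-algebra statement and yields reductivity of the identity component; the maximal-compact conclusion then follows directly for $\Aut_\nu(X,H)_0$, and either the theorem should be read at the level of identity components or a short extra argument is needed to pass to the full group.
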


Thus, for example, cscK metrics are automatically invariant under a maximal compact subgroup of the automorphism group of the manifold. Denote by $T\subset \Isom_{\nu}(X,\omega)$ a maximal torus, with Lie algebra $\mft$ and corresponding space of holomorphy potentials $\bar\mft$. 

\begin{theorem} Let $\omega \in c_1(H)$ be an extremal metric, and denotes $C^{k,\alpha}(X,\R)^T \subset C^{k,\alpha}(X,\R)$ denote the space of $T$-invariant functions on $X$, with $k \geq 4$. Then $$\scL: C^{k,\alpha}(X,\R)^T \to C^{k-4,\alpha}(X,\R)$$ sends real torus invariant functions to real torus invariant functions. Moreover, the operator \begin{align*}&\scL': C^{k,\alpha}(X,\R)^T\times \bar \mft \to C^{k-4,\alpha}(X,\R)^T, \\ & \scL'(\varphi,h) = \scL(\varphi) + h\end{align*} is surjective.
\end{theorem}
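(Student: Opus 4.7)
The plan is to prove the two statements in turn, reducing everything to the observation that at an extremal metric the operator $\scL$ coincides with $-\scD_\omega^*\scD_\omega$. For the first claim, since $\omega$ is extremal, Theorem \ref{thm:torus-invariance} guarantees that $T \subset \Isom_\nu(X,\omega)$ acts on $X$ by $\omega$-isometries. The Lichnerowicz operator $\scD_\omega^*\scD_\omega$ is built intrinsically from $\omega$ via formula \eqref{eqn:lich}, so for any $\sigma \in T$ and any $\varphi \in C^{k,\alpha}(X,\R)$ one has $\sigma^*(\scD_\omega^*\scD_\omega \varphi) = \scD_\omega^*\scD_\omega(\sigma^*\varphi)$. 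Taking $\varphi$ to be $T$-invariant shows $\scD_\omega^*\scD_\omega \varphi$ is $T$-invariant, so $\scL = -\scD_\omega^*\scD_\omega$ preserves $T$-invariance.

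For surjectivity of $\scL'$, I would argue by Fredholm theory. The operator $\scD_\omega^*\scD_\omega$ is fourth-order elliptic and formally self-adjoint, hence Fredholm as a map $C^{k,\alpha}(X,\R)^T \to C^{k-4,\alpha}(X,\R)^T$. By self-adjointness, the image of $\scD_\omega^*\scD_\omega$ on $T$-invariant functions is the $L^2$-orthogonal complement of $\ker \scD_\omega^*\scD_\omega|_{C^{k,\alpha}(X,\R)^T}$. This kernel, by the identification $\ker \scD_\omega^*\scD_\omega = \bar\mfh$ recalled earlier, consists of the real $T$-invariant holomorphy potentials, i.e.\ potentials for vector fields in $\mfh$ centralising the $T$-action. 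Thus to finish the proof I need to show that this space of invariant holomorphy potentials equals precisely $\bar\mft$; once this is done, the image of $-\scD_\omega^*\scD_\omega$ on $C^{k,\alpha}(X,\R)^T$ is $\bar\mft^{\perp}$, and adding $\bar\mft$ recovers all of $C^{k-4,\alpha}(X,\R)^T$.

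The main obstacle is this last identification $\bar\mfh^T \cap C^{\infty}(X,\R) = \bar\mft$, and this is where one genuinely uses that $\omega$ is extremal. The strategy is as follows. Since the extremal Killing field generating $\nu$ lies in $\mft$, any $\xi \in \mfh$ commuting with $T$ automatically commutes with $\nu$, so $\xi \in \mfh_\nu := \Lie \Aut_\nu(X,H)$. By the extremal version of Calabi's theorem (Theorem \ref{thm:torus-invariance}), $\Isom_\nu(X,\omega)$ is a maximal compact subgroup of $\Aut_\nu(X,H)$, and the latter is the complexification of the former, hence reductive. Standard structure theory for reductive complex Lie groups then implies that the centraliser in $\mfh_\nu$ of the maximal torus $T \subset \Isom_\nu(X,\omega)$ is exactly $\mft \otimes \C$, whose real part together with the constants yields $\bar\mft$. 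Combining with the previous paragraph, $\scL'$ is surjective. The technical subtlety worth emphasising is the reductivity of $\Aut_\nu(X,H)$, which fails for general polarised manifolds but holds precisely because an extremal metric exists in $c_1(H)$.
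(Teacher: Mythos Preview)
Your argument for surjectivity is essentially correct and in fact more detailed than the paper's own treatment, which simply asserts in the Remark following the theorem that surjectivity ``is straightforward from ellipticity, and hence surjectivity orthogonal to the kernel.'' Your identification of the $T$-invariant real kernel with $\bar\mft$ via Calabi's structure theorem is the right idea. One small caveat: Theorem~\ref{thm:torus-invariance} as stated only gives maximal compactness of $\Isom_\nu(X,\omega)$, not that $\Aut_\nu(X,H)$ is its complexification; the centraliser conclusion you need does, however, follow from Calabi's full decomposition of the Lie algebra of holomorphic vector fields on an extremal manifold.

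There is, however, a genuine gap in your treatment of the first claim. You show that $T$-invariance is preserved because $T$ acts by isometries, but you do not address why $\scD_\omega^*\scD_\omega\varphi$ is \emph{real} for real $\varphi$. This is precisely the subtle point the paper singles out in the Remark following the theorem: the Lichnerowicz operator is not a real operator in general. Formula~\eqref{eqn:lich} is deceptive here; as made explicit later in the proof of Lemma~\ref{extremal-real}, the term $\langle\nabla S(\omega),\nabla\phi\rangle$ in complex conventions carries an imaginary part $iJ\nabla S(\omega)(\phi)$. The reason this vanishes on $T$-invariant functions is that $J\nabla S(\omega)$ is the extremal vector field, which by construction lies in $\mft$, so $(J\nabla S(\omega))(\phi)=0$ whenever $\phi$ is $T$-invariant. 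Without this step your first paragraph does not establish that $\scL$ lands in real-valued functions, and consequently your Fredholm argument in the second paragraph, which relies on real self-adjointness on $C^{k,\alpha}(X,\R)^T$, is not yet justified.
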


\begin{remark}The fact that we linearise at an extremal metric and work with torus invariant functions is crucial here. Indeed, it follows for example from \cite[p. 16]{arezzo-pacard-singer} that provided one works with torus invariant functions, the Lichnerowicz operator is a real operator, which is not true more generally. Slightly more explicitly, if $\omega$ is extremal, then $J\nabla S(\omega)$ is a real holomorphic vector field lying in some compact torus $T$ of automorphisms of $(X,H)$. Then the operator $\scL$ is a real operator on $T$-invariant functions (of course this special case also follows from the fact that the scalar curvature is a real function). Thus, for example, if $\omega$ is a cscK metric, then the operator $\scL$ is real valued without requiring any torus invariance.  The surjectivity is straightforward from ellipticity, and hence surjectivity orthogonal to the kernel, of the operator.\end{remark}

\subsection{Moduli and fibrations} Consider a fibration $\pi: X \to B$ of relative dimension $m$ such that each fibre $X_b = \pi^{-1}(b)$ is smooth for $b \in B$, so that $\pi$ is a holomorphic submersion. We let $n$ be the dimension of $B$, so that $X$ has dimension $m+n$. We endow $X$ with a closed $(1,1)$-form $\omega \in c_1(H)$ such that $\omega$ restricts to a K\"ahler metric on each fibre, which is to say that $\omega$ is a \emph{relatively} K\"ahler metric and $H$ is relatively ample.

Given a $(p,p)$-form $\eta$ on $X$, the fibre integral produces a $(p-m,p-m)$-form $\int_{X/B}\eta$ on $B$ as follows. One first reduces to the local case by a partition of unity argument. Next, using the submersion structure one writes locally over $\pi^{-1}(U)$ with $U \subset B$ $$\eta = \zeta \wedge \pi^*\kappa$$ for some $(p-n,p-n)$-form $\kappa$ on $B$, and then defines $$\int_{X/B}\eta|_{U} = \left(\int_{X/B}\zeta\right)\kappa,$$ where $\int_{X/B}\zeta$ is naturally viewed as a function on $X$. This local construction then globalises to give the desired $(p-m,p-m)$-form $\int_{X/B}\eta$.

The relatively K\"ahler metric $\omega_X$ naturally induces a hermitian metric on the vertical tangent bundle $\scV = \ker d\pi$, which is a holomorphic vector bundle of rank $m$ by the hypothesis that $\pi$ is a holomorphic submersion. We denote the curvature of this metric by $\rho \in c_1(K_{X/B}) = c_1(K_X\otimes(\pi^*K_B^{-1})) = c_1( \wedge^m \scV)$ (we shall return to this construction in more detail in Section \ref{sec:optimal}). On each fibre $X_b$ for $b \in B$, $\rho$ restricts to the Ricci curvature $\Ric\omega_b$. 

\begin{definition}\cite{DN,fujiki-schumacher}Suppose $\omega_X\in c_1(H)$ restricts to a cscK metric $\omega_b$ on every fibre over $b \in B$. We define the \emph{Weil-Petersson metric} to be  $$\alpha =  \frac{S(\omega_b)}{m+1}\int_{X/B} \omega^{m+1} - \int_{X/B}\rho \wedge \omega_X^m.$$  \end{definition}

This is a closed, smooth $(1,1)$-form on $B$. Note that $S(\omega_b)$ is actually independent of $b$ as it is a topological constant. One can show using uniqueness of cscK metrics that this is actually independent of fibrewise cscK metric in the class chosen \cite{DN}. The positivity properties of $\alpha$ on $B$ are precisely related to the moduli theory of the fibres. The precise moduli theory we shall require is the following, which extends the classical work of Fujiki-Schumacher \cite{fujiki-schumacher}. 

\begin{theorem}\cite[Theorem 1.1]{DN} There exists a a complex space $\M$ which is a moduli space of polarised manifolds admitting a cscK metric. More precisely, points of $\M$ are in bijection with cscK polarised manifolds, and a family $\pi: X\to B$ induces a map $q: B \to \M$ compatible with this bijection. \end{theorem}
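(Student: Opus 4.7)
The plan is to adapt the classical Fujiki--Schumacher construction to the setting of non-discrete fibrewise automorphism groups. The key new ingredient is that, for any cscK polarised manifold $(X_0,H_0)$, the identity component $\Aut_0(X_0,H_0)$ is reductive by the cscK analogue of Matsushima's theorem. This reductivity is what makes it possible to form a GIT-style quotient of the local deformation space and obtain a complex-analytic local model for $\M$. I would thus build $\M$ by constructing local charts around each cscK polarised manifold $[(X_0,H_0)]$ and then gluing them using versality.

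For the local charts, fix such a cscK $(X_0,H_0)$ together with a cscK metric $\omega_0 \in c_1(H_0)$, and apply Kuranishi theory to deformations of $X_0$ as a polarised complex manifold. This produces a versal family $\mathcal{X} \to T$ over a germ of complex space $(T,0)$ carrying a natural holomorphic action of the reductive group $K = \Aut_0(X_0,H_0)$. The next step is to identify the $K$-invariant sublocus $T^{\textrm{cscK}} \subset T$ consisting of those parameters $t$ whose fibre $(\mathcal{X}_t,H_t)$ admits a cscK metric in its K\"ahler class. An implicit function argument applied to the scalar curvature operator, using the structure of the Lichnerowicz operator $\scD^{*}\scD$ at $\omega_0$ from equation \eqref{eqn:lich}, identifies $T^{\textrm{cscK}}$ as the zero locus of a $K$-equivariant map into a finite-dimensional $K$-representation, essentially the cokernel of the linearised equation. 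Hence $T^{\textrm{cscK}}$ is a closed analytic subset of $T$, and the local model for $\M$ near $[X_0]$ is the quotient $T^{\textrm{cscK}} \git K$, which is a complex space by a Luna-type slice theorem, applicable precisely because $K$ is reductive. The uniqueness of cscK metrics up to automorphism (Theorem \ref{thm:uniqueness}) ensures that points of this quotient are in bijection with nearby isomorphism classes of cscK polarised manifolds.

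For the global construction, versality of the Kuranishi chart means that for any family $\pi: X\to B$ with $(X_{b_0}, H_{b_0})$ cscK there is a unique local moduli map $q_U: U \to T^{\textrm{cscK}} \git K$ classifying the nearby fibres; compatibility of these local moduli maps on overlaps follows again from Theorem \ref{thm:uniqueness}, and patching them yields the global complex space $\M$ together with the moduli map $q: B \to \M$.

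The main obstacle will be verifying that $\M$ is Hausdorff and that the complex-analytic (rather than merely real-analytic) structure is well-defined. Hausdorffness requires a careful $L^2$-continuity argument in families, together with Theorem \ref{thm:uniqueness}, to rule out a sequence of cscK manifolds converging to two non-isomorphic limits. The complex-analytic structure is the most delicate point: it requires upgrading the a priori real-analytic moment-map-type obstruction cutting out $T^{\textrm{cscK}}$ to a holomorphic one. This ultimately rests on the fact that variations of K\"ahler metric within a fixed class are governed by the $\partial\bar\partial$-lemma, which allows the cscK condition on nearby complex structures to be reformulated as a genuinely holomorphic condition on the Kuranishi slice.
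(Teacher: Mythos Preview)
The paper does not prove this theorem; it is quoted without proof from \cite{DN} as a preliminary result and used as a black box throughout. So there is no proof in the present paper to compare your proposal against.

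That said, your sketch is broadly in the spirit of the construction in \cite{DN}: one builds local charts from a Kuranishi-type slice on which the reductive group $K=\Aut_0(X_0,H_0)$ acts, identifies the cscK locus via a finite-dimensional reduction of the scalar curvature equation (this is the deformation theory of Br\"onnle and Sz\'ekelyhidi \cite{TB,GS} also cited in the paper), takes a GIT-type quotient, and glues using uniqueness of cscK metrics. One point to correct: the cscK locus $T^{\mathrm{cscK}}$ is not cut out as the zero set of a holomorphic $K$-equivariant map into a representation. The finite-dimensional reduction instead produces a \emph{moment map} for the action of the maximal compact of $K$ on the slice, and the cscK locus corresponds to the polystable points (orbits meeting the zero of the moment map). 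The complex-analytic structure on the quotient then comes from GIT for the complexified reductive group acting on the slice, not from upgrading the obstruction map itself to be holomorphic as you suggest at the end. Your final paragraph correctly identifies this as the delicate step, but the resolution is via the Kempf--Ness correspondence rather than via the $\partial\bar\partial$-lemma.
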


The link with the Weil-Petersson metric is as follows.

\begin{theorem}\cite[Theorem 1.3]{DN} There is a K\"ahler metric $\Omega_{WP}$ on $\M$ such that if $\pi: (X,H)\to B$ is a family of cscK manifolds with associated Weil-Petersson metric $\alpha$ then  $q^*\Omega_{WP} = \alpha$. \end{theorem}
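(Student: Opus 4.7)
The plan is to construct $\Omega_{WP}$ pointwise on $\M$ from Hodge-theoretic data on the cscK fibres, then verify that it descends to a Kähler form on $\M$, and finally match it against the fibre-integral formula for $\alpha$ via Kodaira--Spencer theory. At a point $p = [(Y,H_Y)] \in \M$, endow $Y$ with the cscK metric $\omega_Y \in c_1(H_Y)$, which is unique up to $\Aut_0(Y,H_Y)$ by Theorem \ref{thm:uniqueness}. The deformation theory for cscK manifolds developed in \cite{DN} identifies $T_p\M$ with the harmonic Kodaira--Spencer classes in $H^1(Y,TY)$ modulo the infinitesimal action of $\Aut_0(Y,H_Y)$; define $\Omega_{WP}(v_1,v_2)$ to be the $L^2$-pairing of such harmonic representatives with respect to $\omega_Y$.

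The first step is to check that this recipe gives a well-defined smooth $(1,1)$-form on $\M$. Independence of the choice of cscK representative follows from the uniqueness statement together with the equivariance of the $L^2$-pairing under pullback by elements of $\Aut_0(Y,H_Y)$. Smoothness is obtained by working on a Kuranishi slice of cscK complex structures near $p$, where the Kodaira--Spencer map and its harmonic projection depend smoothly on the moduli parameter. Closedness and the Kähler property can then be extracted either from a direct variational computation on the slice or, more conveniently, deduced a posteriori from the equality $q^*\Omega_{WP} = \alpha$ applied to local universal families, since the right-hand side is a closed $(1,1)$-form produced as a fibre integral of closed forms.

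The core computation is then $q^*\Omega_{WP} = \alpha$. For a family $\pi:(X,H)\to B$ with relatively cscK $\omega_X$, the horizontal subbundle $\scH$ associated to $\omega_X$ yields for each $v \in T_bB$ a representative $A(v) \in A^{0,1}(X_b,TX_b)$ of the Kodaira--Spencer class, and harmonic projection with respect to $\omega_b$ extracts the piece contributing to $\Omega_{WP}$. The content of the computation is to expand $\int_{X/B}\rho \wedge \omega_X^m$ in the horizontal-vertical decomposition induced by $\omega_X$ and recognise the horizontal $(1,1)$-component of $\rho$ together with the symplectic-curvature contribution from $\omega_X^{m+1}$ as exactly the $L^2$-pairing of $A(v)$ with $A(\bar w)$, after the topological term $\tfrac{S(\omega_b)}{m+1}\int_{X/B}\omega_X^{m+1}$ cancels the non-harmonic parts via the fibrewise cscK condition and Hodge-theoretic orthogonality. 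This is the Fujiki--Schumacher calculation \cite{fujiki-schumacher} in the discrete-automorphism setting, and the extension to the general cscK case is the content of \cite{DN}.

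The main obstacle I expect is descent to $\M$ in the presence of fibrewise automorphisms: the moduli space $\M$ is a priori a complex space with possibly non-trivial stabilisers, so one must show both that the $L^2$-pairing is $\Aut_0(Y,H_Y)$-invariant in a sufficiently strong sense and that the smoothness on the Kuranishi slice is compatible with taking the quotient. This is handled via the equivariant slice theorem for cscK structures in \cite{DN}, which produces a local $\Aut_0$-invariant Kuranishi model at each point and reduces the verification of all the above properties to an equivariant computation on a finite-dimensional slice.
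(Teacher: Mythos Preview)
The theorem you are attempting to prove is not proved in this paper at all: it is quoted verbatim as \cite[Theorem 1.3]{DN} and used as a black box, with no argument given beyond the citation. There is therefore no ``paper's own proof'' to compare your proposal against.

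Your sketch is a reasonable outline of the strategy actually carried out in \cite{DN} (building on Fujiki--Schumacher \cite{fujiki-schumacher}): define the $L^2$-pairing on harmonic Kodaira--Spencer representatives, check well-definedness and descent in the presence of automorphisms via the equivariant slice/deformation theory for cscK manifolds, and match the result against the fibre-integral formula for $\alpha$. That is indeed the shape of the argument in the cited reference. But none of this belongs in the present paper, and if you are being asked to reproduce the paper's proof of this statement, the correct answer is simply that the paper does not supply one.
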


In summary, from $\pi: (X,H) \to B$ we obtain a moduli map $q: B \to \M$ in such a way that the $(1,1)$-form $\alpha$ is the pullback of a K\"ahler metric on $\M$.

\subsection{Twisted extremal metrics} Extremal metrics are the most natural choice of K\"ahler metric on a K\"ahler manifold, and as we have explained there are deep links between the scalar curvature and the automorphisms of the underlying manifold. Suppose now that $$q: (B,L) \to (M,L_M)$$ is a map between K\"ahler manifolds. More generally, we shall allow $M$ to be a non-compact complex space. It is natural to ask if there is a K\"ahler metric on $X$ whose behaviour reflects the geometry of the map itself. This was achieved in our previous work \cite{fibrations}, and we now explain the consequences which we will use.

\begin{definition} Fix a K\"ahler metric $\Omega \in L_M$. We say that $\omega \in c_1(L)$  is a \begin{enumerate}[(i)] \item \emph{twisted cscK metric} if $S(\omega) - \Lambda_{\omega}q^*\Omega$ is constant;
\item \emph{twisted extremal metric} if $\scD_{\omega}(S(\omega) - \Lambda_{\omega}q^*\Omega),$ which is to say that $S(\omega) - \Lambda_{\omega}q^*\alpha$ is a holomorphy potential on $B$. \end{enumerate}
\end{definition}

Note that only the pullback of $\alpha$ to $X$ itself is involved in the definition. Thus the singularities and compactness of $M$ are irrelevant, provided $q^*\Omega$ is smooth on $B$.

\begin{definition}\label{def:map}\cite[Definition 1]{fibrations} Let $q: (B,L) \to (M,L_M)$ be a map between K\"ahler manifolds. We define $\Aut(q) \subset \Aut(B,L)$ to be the Lie subgroup of automorphisms which fix $q$, in the sense that $$\Aut(q) = \{g \in \Aut(B,L)\ | \  q \circ g = q\}.$$ We denote by $\mfp$ the Lie algebra of $\Aut(p)$, with $\bar\mfp\subset C^{\infty}(X,\C)$ the associated vector space of holomorphy potentials.
\end{definition}

We will only be interested in the \emph{geometric} case of twisted extremal metrics, when the associated holomorphic vector field is an element $\nu \in \mfp$. We then further define $\Aut_{\nu}(p)$ to be the automorphisms of $p$ whose flow commutes with $\nu$.

\begin{theorem}\cite[Corollary 4.2]{fibrations}\label{isometriesoftwistedextremal} Suppose $\omega$ is a twisted extremal metric. Then isometry group $\Isom_{\nu}(q,\omega) = \Isom(B,\omega) \cap \Aut_{\nu}(q)$ is a maximal compact subgroup of $\Aut_{\nu}(q)$.
\end{theorem}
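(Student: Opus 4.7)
The plan is to adapt Calabi's classical argument (Theorem \ref{thm:torus-invariance}) to the twisted setting, by combining a uniqueness theorem for twisted extremal metrics with Cartan's fixed point theorem on non-positively curved symmetric spaces.

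First, I would establish a twisted version of Theorem \ref{thm:uniqueness}: any two twisted extremal metrics $\omega, \omega' \in c_1(L)$ (with respect to a fixed twist $q^*\Omega$) are related by an element $g \in \Aut_0(q)$ via $g^*\omega = \omega'$. The key observation making this adaptation possible is that since $\Aut(q)$ is defined by the condition $q \circ g = q$, any $g \in \Aut(q)$ automatically satisfies $g^*(q^*\Omega) = q^*\Omega$, so the twist is compatible with the symmetry being exploited. One can then run the Berman--Berndtsson convexity argument for the twisted Mabuchi functional along weak geodesics: the twist term simply contributes a linear functional to the twisted Mabuchi energy (its derivative involves $\Lambda_\omega q^*\Omega$), and linear terms do not affect convexity. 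Strict convexity modulo the flow of $\Aut_0(q)$ then produces uniqueness.

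Next, let $K \subset \Aut_\nu(q)$ be an arbitrary compact subgroup; I want to show that $K$ is contained in some conjugate of $\Isom_\nu(q,\omega)$. For every $k \in K$, the metric $k^*\omega$ is again a twisted extremal metric with extremal vector field $\nu$: its extremal condition follows from $k^*q^*\Omega = q^*\Omega$, and the extremal vector field is preserved because $k$ commutes with the flow of $\nu$. By the uniqueness statement just described, there is a unique coset $\phi(k) \in \Aut_\nu(q)/\Isom_\nu(q,\omega)$ with $k^*\omega = \phi(k)^*\omega$. This realises $K$ as acting by isometries on the homogeneous space $N = \Aut_\nu(q)/\Isom_\nu(q,\omega)$.

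Equipping $N$ with a bi-invariant Riemannian metric of non-positive curvature (which exists on the connected reductive quotient we care about, as $\Aut_\nu(q)$ is a complex Lie group acting on the space of $\nu$-compatible twisted extremal metrics as a symmetric space), Cartan's fixed point theorem guarantees that the compact group $K$ has a fixed point $[g_0] \in N$. Translating back, $g_0^*\omega =: \omega'$ is a $K$-invariant twisted extremal metric, so $K \subset \Isom(B,\omega') \cap \Aut_\nu(q) = \Isom_\nu(q,\omega')$. Since $\omega' = g_0^*\omega$, the latter group equals $g_0^{-1} \Isom_\nu(q,\omega)\, g_0$, and hence $K$ lies in a conjugate of $\Isom_\nu(q,\omega)$. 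This shows $\Isom_\nu(q,\omega)$ is maximal compact in $\Aut_\nu(q)$.

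The main obstacle I anticipate is the uniqueness step. In the untwisted case this is the highly nontrivial result of Berman--Berndtsson, and although the extension to the twisted case is conceptually straightforward (because $q^*\Omega$ is closed, semi-positive, and $\Aut(q)$-invariant by construction), one must check that all the analytic machinery---existence and regularity of weak geodesics in $c_1(L)$, integration by parts along them, and the identification of critical points of the twisted Mabuchi functional with twisted extremal metrics in the sense of Definition of the twisted extremal condition---carries through when the twist is allowed to degenerate (it is only semi-positive, not Kähler, since $q$ may fail to be an immersion). Once this analytic input is in place, the Cartan fixed-point argument in the last step is essentially formal.
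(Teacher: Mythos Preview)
The paper does not prove this statement; it is quoted from \cite{fibrations}, and the only indication given here is that the proof there rests on the linearisation result of Theorem \ref{fibrations-result}. That approach is Calabi--Lichnerowicz--Matsushima style: at a twisted extremal metric the linearised operator is self-adjoint with kernel exactly $\bar{\mfp}$, and this kernel identification yields directly a reductive decomposition $\mfp = \mfk \oplus J\mfk$ of the Lie algebra of $\Aut_\nu(q)$, where $\mfk$ is the Lie algebra of the isometry group. Maximal compactness is then a formal consequence of reductivity. Your proposal takes the alternative route via uniqueness and a fixed-point argument, which is a legitimate strategy in the untwisted case; what it buys is that one never has to compute the linearisation explicitly, at the cost of importing the much heavier Berman--Berndtsson machinery.

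There is, however, a genuine gap in your Cartan step. You assert that $N = \Aut_\nu(q)/\Isom_\nu(q,\omega)$ carries a bi-invariant metric of non-positive curvature because $\Aut_\nu(q)$ is reductive, but reductivity of $\Aut_\nu(q)$ is precisely equivalent to what you are trying to prove: a complex Lie group is reductive if and only if its identity component is the complexification of a maximal compact subgroup. So as written the argument is circular. The standard repair is to avoid the group-theoretic metric on $N$ altogether and instead work in the infinite-dimensional space of K\"ahler potentials with the Mabuchi metric, which is non-positively curved independently of any reductivity hypothesis; the orbit of $\omega$ then sits inside as a totally geodesic submanifold (this uses that twisted extremal metrics are minima of the twisted Mabuchi functional along geodesics), and one applies the fixed-point theorem there. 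You should make this explicit. Separately, the twisted uniqueness theorem you invoke is not, to my knowledge, stated in the literature in the generality you need (semi-positive twist pulled back via a possibly non-immersive map), so your sketch of its proof would need to be filled in carefully; the paper's route via Theorem \ref{fibrations-result} sidesteps both of these issues.
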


The proof of this relies on the following result regarding the linearisation of the twisted scalar curvature. In the following statement, we denote by $T_q \subset \Isom(q,\omega)$ any compact torus containing the flow of the vector field $\nu$.

\begin{theorem}\cite[Proposition 4.3]{fibrations}\label{fibrations-result} Denote by $\scL_{\Omega}$ the linearisation of the twisted extremal operator \begin{align*} &C^{\infty}(B,\R) \times \bar\mfp \to C^{\infty}(B,\R), \\ &(\varphi,f) \to S(\omega+\ddc\varphi) - \Lambda_{\omega + \ddc \varphi} q^*\Omega - \frac{1}{2}\langle \nabla \varphi,\nabla f  \rangle - f.\end{align*} Suppose that $\omega$ is a twisted extremal metric, with $f = S(\omega) - \Lambda_{\omega}q^*\Omega$. Then $$\scL_{\Omega}(\phi,0) = -\scD_{\omega}^*\scD_{\omega}\phi + \frac{1}{2}\langle\nabla\Lambda_{\omega}q^*\Omega,\nabla \varphi\rangle + \langle \ddc \varphi,q^*\Omega \rangle.$$ Defining $$\scL'_{\Omega}: C^{\infty}(B,\R)^{T} \times \bar\mfp \to  C^{\infty}(B,\R)^{T}$$ by $\scL'_{\Omega}(\varphi,h) = \scL_{\Omega}(\varphi,0) - h$, then $\scL'_{\Omega}$ is well-defined and surjective. 
\end{theorem}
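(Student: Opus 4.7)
The plan is to compute $\scL_{\Omega}(\phi,0)$ directly from the definition, verify $T$-invariance of the resulting operator, and establish surjectivity via an ellipticity-plus-cokernel argument.

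For the explicit formula, I linearise the twisted extremal operator $(\phi,f)\mapsto S(\omega+\ddc\phi)-\Lambda_{\omega+\ddc\phi}q^*\Omega-\tfrac{1}{2}\langle\nabla\phi,\nabla f\rangle - f$ in the direction $(\phi,0)$ at the basepoint $(\omega,f)$ with $f=S(\omega)-\Lambda_\omega q^*\Omega$. The linearisation of $S(\omega+\ddc\phi)$ is $\scG(\phi)=-\scD_\omega^*\scD_\omega\phi+\tfrac{1}{2}\langle\nabla S(\omega),\nabla\phi\rangle$ from the preliminaries. Differentiating the inverse metric gives $\tfrac{d}{dt}\Lambda_{\omega+t\ddc\phi}(q^*\Omega)\big|_{t=0}=-\langle\ddc\phi,q^*\Omega\rangle$, contributing $+\langle\ddc\phi,q^*\Omega\rangle$ from the $-\Lambda_{\omega+\ddc\phi}q^*\Omega$ term. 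The summand $-\tfrac{1}{2}\langle\nabla\phi,\nabla f\rangle$ is already linear in $\phi$ with $f$ frozen. Substituting the twisted extremal value of $f$ collapses the two gradient contributions to $\tfrac{1}{2}\langle\nabla\Lambda_\omega q^*\Omega,\nabla\phi\rangle$, yielding the stated formula.

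For well-definedness of $\scL'_{\Omega}$ on $T$-invariant data, I note that $\omega$ is $T$-invariant by Theorem \ref{isometriesoftwistedextremal} and that $q^*\Omega$ is $T$-invariant since $g^*q^*\Omega=(q\circ g)^*\Omega=q^*\Omega$ for every $g\in T\subset \Aut_\nu(q)$. Each summand of the formula then preserves $T$-invariance. As in the remark above, $\scL_\Omega(\cdot,0)$ is real-valued on $T$-invariant real functions precisely because $J\nabla f$ lies in the Lie algebra of $T$, cancelling the would-be imaginary contribution of the Lichnerowicz operator; the same observation shows that the relevant elements of $\bar\mfp$ can be chosen $T$-invariant and real, so the target $C^{\infty}(B,\R)^T$ is genuinely hit.

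For surjectivity, $\scL_\Omega(\cdot,0)$ is fourth-order elliptic with principal symbol $|\xi|^4$ inherited from $\scD_\omega^*\scD_\omega$, so by standard elliptic theory its image has finite codimension in $C^{k-4,\alpha}(B,\R)^T$. After an integration by parts exploiting closedness of $q^*\Omega$, the second-order pair $\tfrac{1}{2}\langle\nabla\Lambda_\omega q^*\Omega,\nabla\phi\rangle+\langle\ddc\phi,q^*\Omega\rangle$ assembles into a formally self-adjoint expression, so $\scL_\Omega(\cdot,0)$ is self-adjoint on $T$-invariant real functions and its cokernel coincides with its kernel. I then identify this kernel with $\bar\mfp$: pairing an element $\phi$ of the kernel with itself and integrating by parts reduces $\scL_\Omega(\phi,0)=0$ to $\scD_\omega\phi=0$ (so $\phi$ is a holomorphy potential) together with a residual constraint forcing the associated holomorphic vector field to preserve the map $q$, placing it in $\mfp$. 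The $-h$ term in $\scL'_\Omega(\phi,h)$ then exhausts the cokernel, giving surjectivity.

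The main obstacle lies in the self-adjointness of the second-order twisting contribution together with the identification of the kernel with $\bar\mfp$ rather than the larger $\bar\mfh$. Both rely crucially on the precise gradient correction forced by the twisted extremal hypothesis $f=S(\omega)-\Lambda_\omega q^*\Omega$: without this cancellation the cokernel would be strictly larger than $\bar\mfp$ and the finite-dimensional correction $-h$ would be insufficient to achieve surjectivity.
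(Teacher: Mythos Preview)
Your proposal is correct and follows essentially the same approach as the paper's one-sentence sketch (ellipticity plus identification of the kernel with $\bar\mfp$); indeed you supply considerably more detail than the paper itself, which simply cites \cite[Proposition 4.3]{fibrations}. One small sharpening: the step where the ``residual constraint'' forces the holomorphic vector field to lie in $\mfp$ uses not just closedness of $q^*\Omega$ but its semi-positivity (it is the pullback of a genuine K\"ahler metric on $\M$), so that the integration-by-parts identity yields a term of the form $-\int|\partial\phi|^2_{q^*\Omega}\,\omega^n$ with a definite sign; this is what pins the kernel down to $\bar\mfp$ rather than all of $\bar\mfh$.
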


As in the extremal case, the result above is obtained by ellipticity and an identification of the kernel of $\scL_{\Omega}$ with the space of holomorphy potentials for the map to obtain surjectivity, while for $\scL_{\Omega}$ to be a real operator it is again crucial that we linearise at a twisted extremal metric and assume invariance under the flow $\nu$. While \cite[Proposition 4.3]{fibrations} assumes invariance under a maximal compact sbgroup of the automorphism group, from the proof it is clear that it is sufficient to assume invariance under the flow of $\nu$.

We return now to the setting of a fibration $\pi: (X,H) \to (B,L)$ a fibration with all fibres cscK. In this case we obtain a moduli map $q: (B,L)\to (\M, [\Omega_{WP}]).$ 

\begin{lemma}\label{lifting-moduli} Suppose $g\in\Aut(X,H)$ is a lift of an automorphism of $(B,L)$. Then $g$ is the lift of an automorphism of $q$. \end{lemma}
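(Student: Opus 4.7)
The plan is to unpack the definition of the moduli map $q$ and show that the induced base automorphism preserves the isomorphism class of each fibre as a polarised manifold. Since $g \in \Aut(X,H)$ lifts an automorphism of $(B,L)$, there exists $h \in \Aut(B,L)$ with $\pi \circ g = h \circ \pi$. I would begin by fixing an arbitrary $b \in B$ and observing that this relation forces $g$ to restrict to a biholomorphism $g_b \colon X_b \to X_{h(b)}$ between the corresponding fibres.

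The next step is to track the polarisation. Because $g$ lies in $\Aut(X,H)$, it pulls back $H$ to itself (up to isomorphism of line bundles), and restricting to the fibre $X_b$ this gives $g_b^* H_{h(b)} \cong H_b$. Consequently $(X_b,H_b)$ and $(X_{h(b)},H_{h(b)})$ are isomorphic as polarised manifolds. Since $\M$ parametrises polarised manifolds admitting a cscK metric up to isomorphism, the points $q(b)$ and $q(h(b))$ of $\M$ coincide. As $b \in B$ was arbitrary, this gives $q \circ h = q$, which is precisely the condition that $h \in \Aut(q)$ in the sense of Definition \ref{def:map}.

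There is no substantive obstacle here: the result is a direct consequence of the characterisation of $q$ as the map assigning to each point of $B$ the isomorphism class of the fibre with its induced polarisation. The only subtle point worth spelling out is ensuring that $g_b^*H_{h(b)} \cong H_b$ as \emph{polarisations} and not merely as line bundles up to numerical equivalence, but this is immediate from the fact that $\Aut(X,H)$ is by definition the subgroup of $\Aut(X)$ lifting to $H$, so $g^*H \cong H$ globally and hence fibrewise.
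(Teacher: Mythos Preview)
Your proof is correct and follows essentially the same approach as the paper: both argue that the lift $g$ restricts to an isomorphism of polarised manifolds $(X_b,H_b)\cong(X_{h(b)},H_{h(b)})$, whence $q(b)=q(h(b))$ by definition of the moduli map. Your version is in fact more explicit about tracking the polarisation than the paper's own proof, which simply observes that lifting forces the fibres in an $\Aut(B,L)$-orbit to be isomorphic.
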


\begin{proof}

For $g \in \Aut(B,L)$ to lift to $X$, we mean that $g_X: (X,H) \to (X,H)$ is induced by the pullback diagram $$\begin{tikzcd}(X,H) \times_B B \arrow[r,"g_X"]\arrow[d]& (X,H)\arrow[d,] \\ B\arrow[r,"g"]& B\end{tikzcd}.$$ For this to be the case, it is necessary that in $\Aut(B,L)$-orbits the fibres are isomorphic, namely $\pi^{-1}(b) \cong\pi^{-1}(g.b)$ for all such $g$. But this precisely states that $g$ is an automorphism of the moduli map $q$.
\end{proof}

This was implicit in \cite{fibrations}. A converse to this was also proved in \cite[Proposition 3.10]{fibrations} in the case that all fibres have discrete automorphism group using some ideas from moduli theory. However, the converse is no longer true in the case that the fibres have continuous automorphisms. For example, consider a projective bundle $\pr(E) \to B$. Then the moduli map $p: B \to \{pt\}$ is trivial, so all automorphisms of $(B,L)$ are automorphisms of $p$. Not all such automorphisms do actually lift, and the additional requirement is that the vector bundle is a  $\Aut(B,L)$-equivariant bundle. Thus we will typically assume later that all automorphisms of $q$ lift to $(X,H)$ in the above sense.

\section{Optimal symplectic connections}\label{sec:optimal}

\subsection{Function spaces and projections}

Let $\pi: (X,H) \to B$ be a fibration, and let $\omega_X \in c_1(H)$ be a relatively K\"ahler metric whose restriction to each fibre $(X_b,\omega_b)$ is cscK. We wish to define a splitting of the space of smooth functions $C^{\infty}(X,\R)$ on $X$ using this K\"ahler metric. 

On each fibre $X_b$, the K\"ahler metric $\omega_b$ defines a Lichnerowicz operator $$\scD_b^*\scD_b= (\db\nabla_b^{1,0})^*\db\nabla_b^{1,0}: C^{\infty}(X_b,\R) \to C^{\infty}(X_b,\R).$$ As $b\in B$ varies, these operators vary smoothly and hence induce an operator $$\scD_{\scV}^*\scD_{\scV}: C^{\infty}(X,\R) \to C^{\infty}(X,\R),$$ defined by the property that $(\scD_{\scV}^*\scD_{\scV}(f))|_{X_b} = \scD_b^*\scD_b(f).$ 

As in Section \ref{sec:preliminaries}, integration over the fibres using the relatively K\"ahler metric $\omega_X$ defines a projection $C^{\infty}(X,\R) \to C^{\infty}(B,\R)$ via $f \to \int_{X/B}f\omega_X^m$, i.e. $$\left(\int_{X/B}f\omega_X^m\right)(b) = \int_{X_b} f|_{X_b} \omega_b^m.$$ Let $C^{\infty}_0(X,\R)$ denote the kernel of this projection, namely the functions which integrate to zero over each fibre. Then $\ker \scD_{\scV}^*\scD_{\scV}(f) \subset C^{\infty}_0(X,\R)$ defines a vector subspace which we denote $$C^{\infty}_E(X) = \ker \scD_{\scV}^*\scD_{\scV} \subset C^{\infty}_0(X,\R).$$ Thus each function $f \in C^{\infty}_E(X)$ restricts to a holomorphy potential with mean value zero on each fibre. 

The K\"ahler metric $\omega_b$ defines an $L^2$-inner product on functions $\phi,\psi \in C^{\infty}_0(X_b)$ on each fibre $X_b$ by $$\langle\phi,\psi\rangle_{b} = \int_X \phi\psi\omega_b^m.$$ The space $\ker\scD_b^*\scD_b\subset C^{\infty}_0(X_b)$ is a finite dimensional vector space by ellipticity of $\scD_b^*\scD_b$. The $L^2$-inner product defines a splitting $$C^{\infty}_0(X_b) \cong \ker \scD_b^*\scD_b \oplus C^{\infty}_R(X_b,\R),$$ by declaring that $C^{\infty}_R(X_b,\R)$ is the $L^2$-orthogonal complement to $\ker \scD_b^*\scD_b$ with respect to the volume form induced by $\omega_b$. This splitting depends only on the inner product, thus since $\omega_b$ and $\scD_b^*\scD_b$ vary smoothly with $b$, the projection defines a splitting $$C^{\infty}_0(X,\R) \cong  C^{\infty}_E(X) \oplus C^{\infty}_R(X,\R),$$ where $\phi \in C^{\infty}_R(X,\R)$ if and only if $\phi|_{X_b} \in C^{\infty}_R(X_b,\R)$ for all $b \in B$.  

In summary, this defines a splitting of the space of functions on $X$ as \begin{equation}\label{eq:function-decomposition}C^{\infty}(X,\R) \cong C^{\infty}(B,\R) \oplus C^{\infty}_E(X,\R) \oplus C_R^{\infty}(X,\R),\end{equation}  where if $\phi\in C^{\infty}(X,\R)$ decomposes as $\phi = \phi_B + \phi_E + \phi_R$, then $\phi_B$ is equal to the pullback of a function on $B$, $\phi_E$ restricts to a holomorphy potential on each fibre, and $\phi_R$ integrates to zero over each fibre and is fibrewise $L^2$-orthonormal to the fibrewise holomorphy potentials. 

It will be convenient to give $C^{\infty}_E(X,\R)$ the structure of the global sections of a smooth vector bundle $E$. One natural way of doing this is as follows. The fibration $X \to B$ admits a structure of a smooth fibre bundle since the fibres are compact \cite[Lemma 17.2]{michor}. This allows one to view $C^{\infty}(X,\R)$ as the global sections of an infinite dimensional vector bundle over $B$, whose fibre over $b \in B$ equals $C^{\infty}(X_b,\R)$. The space $C^{\infty}_0(X,\R)$ is naturally a vector subbundle, for example as the kernel of the bundle morphism to the trivial bundle $\R \times B \to B$, which on global sections corresponds the map $C^{\infty}(X,\R) \to C^{\infty}(B)$ given by the fibre integral. Similarly, the bundle morphism $$\scD_{\scV}^*\scD_{\scV}: C^{\infty}_0(X,\R) \to C^{\infty}_0(X,\R)$$ has kernel of constant finite rank by hypothesis, hence the sections in the kernel are the global sections of a vector bundle. This kernel is then naturally identified with the global sections of $E$, and hence $E$ admits a natural structure of a smooth vector bundle with fibre $E_b \cong  \ker\scD_b^*\scD_b$, the space of holomorphy potentials on $X_b$ with respect to $\omega_b$. Another approach would be to view $C^{\infty}_0(X,\R)$ as the total space of the adjoint bundle of the Hamiltonian frame bundle \cite[Remark 6.4.11]{symplectic-topology}, viewing $\pi: (X,H) \to (B,L)$ as a symplectic fibration, and proceeding in the same way to produce the bundle $E$.

\subsection{Fibrations and curvature}

The vertical tangent bundle of $X$ is defined to be the kernel $\scV = \ker d\pi: TX \to TB$. As $\pi$ is holomorphic, $\scV$ is a holomorphic vector bundle. The fibration structure then induces a short exact sequence of holomorphic vector bundles \begin{equation}\label{eq-ses}0 \to \scV \to TX \to \pi^*TB \to 0.\end{equation} Since $\omega_X$ restricts to a non-degenerate metric on each fibre, one obtains a horizontal vector bundle $\scH$ and a smooth splitting $TX \cong \scV \oplus \scH$ by $$\scH_x = \{ u \in T_xX\ |\ \omega_0(u,v) = 0 \textrm{ for all } v \in \scV\}.$$ This is a standard procedure in symplectic geometry, and in this context the form $\omega_X$ is said to be a \emph{symplectic connection}. 

The splitting $TX \cong \scV \oplus \scH$ induces splittings in the smooth category of any tensor on $X$. For any tensor $\beta$ we shall refer to its components lying purely in $\scV$ and $\scH$ as the \emph{vertical} and \emph{horizontal} components and denote them by $\beta_{\scV}$ and $\beta_{\scH}$ respectively. 

The relatively K\"ahler metric $\omega_X$, viewed as a symplectic connection, induces an Ehresmann connection on the fibre bundle $\pi: X \to B$. For a vector field $v$ on $B$, we denote by $v^{\#}$ its horizontal lift using the connection, so that for all $x \in X$, $v^{\#}_x \in \scH_x$.

\begin{definition}\cite[p. 277]{symplectic-topology} The \emph{symplectic curvature} $F_{\scH}$ of $\omega_X$ is a $2$-form $F_{\scH} \in \Omega^2(B,Ham(\V))$  with values in Hamiltonian vector fields on the fibres, defined by $$F_{\scH}(v_1,v_2) = [v_1^{\#},v_2^{\#}]^{vert} = [v_1^{\#},v_2^{\#}] - [v_1,v_2]^{\#}.$$
 \end{definition}

The symplectic curvature is simply the curvature of the associated Ehresmann connection.

Let $\mu^*: Ham(\V) \to C^{\infty}_0(X)$ be the fibrewise comoment map, which sends a Hamiltonian vertical vector field on $X$ to its fibrewise mean value zero Hamiltonian using $\omega_b$. The map $\mu^*$ extends to a map on other tensors, and so one can view $\mu^*(F_{\scH})$ as a two-form on $B$ with values in fibrewise Hamiltonian functions. In the same manner, one can further consider $\mu^*(F_{\scH})$ as a two-form on $X$ with values in fibrewise Hamiltonian functions, by pulling back the form from $B$.

\begin{lemma}\cite[Equation (1.12)]{symplectic-fibrations-book}\label{minimal-coupling} There is a two-form $\beta$ on $B$ such that $$(\omega_X)_{\scH} = \mu^*(F_{\scH}) + \pi^*\beta.$$\end{lemma}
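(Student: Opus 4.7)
The plan is to use the closedness of $\omega_X$ (which holds since $\omega_X \in c_1(H)$) to show that the horizontal component of $\omega_X$, when evaluated on horizontal lifts of vector fields on $B$, differs from $\mu^*(F_{\scH})$ by a function that is constant along the fibres, i.e.\ by the pullback of a function from $B$. This yields the desired two-form $\beta$.

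Concretely, I would fix a vertical vector field $u$ and vector fields $v_1,v_2$ on $B$ with horizontal lifts $v_1^\#,v_2^\#$, and expand
\begin{align*}
0 = d\omega_X(u,v_1^\#,v_2^\#) &= u \cdot \omega_X(v_1^\#,v_2^\#) - v_1^\# \cdot \omega_X(u,v_2^\#) + v_2^\# \cdot \omega_X(u,v_1^\#) \\
&\quad - \omega_X([u,v_1^\#],v_2^\#) + \omega_X([u,v_2^\#],v_1^\#) - \omega_X([v_1^\#,v_2^\#],u).
\end{align*}
Three simplifications collapse this drastically. First, $\omega_X(u,v_i^\#)=0$ by the very definition of $\scH$ as the symplectic orthogonal to $\scV$, killing the second and third terms. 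Second, since $d\pi[u,v_i^\#] = [0,v_i] = 0$, the bracket $[u,v_i^\#]$ is vertical, and again $\omega_X(\text{vertical},v_j^\#)=0$, killing the fourth and fifth terms. Third, by definition of the symplectic curvature, $[v_1^\#,v_2^\#] = [v_1,v_2]^\# + F_{\scH}(v_1,v_2)$ with $F_{\scH}(v_1,v_2)$ vertical, so only the $F_{\scH}$ piece contributes to the last term. What remains is
\[
0 = u \cdot (\omega_X)_{\scH}(v_1^\#,v_2^\#) - \omega_b(F_{\scH}(v_1,v_2),u).
\]
By the defining property of the fibrewise comoment map, $\omega_b(F_{\scH}(v_1,v_2),\cdot) = d(\mu^*(F_{\scH}(v_1,v_2)))$ on each fibre, so the second term equals $u \cdot \mu^*(F_{\scH}(v_1,v_2))$. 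Hence
\[
u \cdot \bigl[(\omega_X)_{\scH}(v_1^\#,v_2^\#) - \mu^*(F_{\scH}(v_1,v_2))\bigr] = 0
\]
for every vertical $u$, meaning the bracketed function is constant along each fibre and descends to a function $\beta(v_1,v_2)$ on $B$.

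To finish, I would verify that $\beta$ is a genuine two-form. Skew-symmetry is immediate. For tensoriality, note that for $f\in C^\infty(B)$ the horizontal lift satisfies $(fv_1)^\# = (\pi^*f)\,v_1^\#$, and both $(\omega_X)_{\scH}$ (being the horizontal component of a two-form) and $\mu^*(F_{\scH})$ (being the comoment map of the two-form $F_{\scH}$) are $C^\infty(B)$-bilinear in their entries on horizontal lifts, so the same is true of the difference. The identity $(\omega_X)_{\scH} = \mu^*(F_{\scH}) + \pi^*\beta$ then follows, since horizontal two-forms on $X$ are determined by their values on horizontal lifts. The only real subtlety, and the only place one could trip up, is the sign and normalisation convention for the comoment map: $\mu^*$ picks the fibrewise mean-value-zero Hamiltonian with a fixed sign, and the equality must be traced through with this convention in mind.
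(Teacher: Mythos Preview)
Your argument is correct and is the standard proof of minimal coupling. The paper does not actually supply a proof of this lemma: it is stated with a citation to the symplectic fibrations literature and is simply quoted as a known fact, with the subsequent paragraph only explaining how to identify $\beta$ explicitly via the fibre integral $\beta = \int_{X/B} (\omega_X)_{\scH} \wedge \omega_X^m / V$. So there is nothing to compare against; your direct computation via $d\omega_X(u,v_1^{\#},v_2^{\#})=0$ is exactly the argument one would find in the cited reference, and your caveat about the sign convention in the comoment map is the only place care is genuinely required.
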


This is known as \emph{minimal coupling} in the symplectic geometry literature. To understand the horizontal component $(\omega_X)_{\scH}$ more explicitly, we proceed as follows. Note that  $\mu^*(F_{\scH})$ has direct image zero: $\mu^*(F_{\scH})$ is of the form $\mu^*(F_{\scH}) = f\pi^*\nu$, where $\int_{X/B} f \omega_X^m=0$ by definition of the comoment map, hence $$\int_{X/B} \mu^*(F_{\scH})\wedge \omega_X^m = \int_{X/B} f\pi^*\nu \wedge \omega_X^m = \nu \int_{X/B}f\omega_X^m = 0.$$ Thus in the Lemma above, setting $V =\int_{X_b}\omega_b^m$ to be the volume of any fibre, the form $\beta$ is given as $$\beta =\int_{X/B}(\omega_X)_{\scH}\wedge\frac{\omega_X^m}{V}.$$ 

The final objects involved which we shall make use of are the horizontal and vertical contractions, and vertical and horizontal Laplacians. The horizontal contraction and horizontal Laplacian will involve a K\"ahler metric  on $B$, which we denote $\omega_B \in c_1(L)$.

\begin{definition} Let $\beta$ be a two-form on $X$. We define 
\begin{enumerate}[(i)] \item the \emph{vertical contraction} to be $$\Lambda_{\scV}\beta = m\frac{\beta_{\scV} \wedge \omega_X^{m-1}}{\omega_X^m},$$ where  the quotient is taken in $\det \scV^*$.
\item the \emph{horizontal contraction} to be $$\Lambda_{\omega_B}\beta = n\frac{\beta_{\scH}\wedge \omega_B^{n-1}}{\omega_B^n},$$ where the quotient is taken in $\det \scH^*$.
\end{enumerate}
\end{definition}

\begin{remark} A two-form $\beta$ as above restricts to a form $\beta|_{X_b}$ on each fibre, and the vertical contraction is simply the fibrewise contraction of $\beta$ with respect to the fibre metric $\omega_b$. Similarly for forms pulled back from $B$, the horizontal contraction is just the pullback of the usual contraction computed on $B$. \end{remark}

\begin{definition} Let $f$ be a function on $X$. We define the \emph{vertical Laplacian} to be  $$\Delta_{\scV}f = \Lambda_{\scV} (\ddc f).$$
\end{definition}

The vertical and horizontal contractions, the vertical Laplacian and their variants play an important role in the previous work on extremal metrics on holomorphic submersions \cite{hong1,fine1, bronnle, lu-seyyedali,fibrations}. 

In addition to the symplectic curvature itself, the equation we will be interested in also involves a relative version of the Ricci curvature. As $\omega_X$ is positive on each fibre, it induces a hermitian metric $g$ on $\scV$ and hence a hermitian metric $\wedge^m g$ on $\wedge^m \scV$. Denote by $\rho$ the curvature of this metric, so that $\rho \in c_1(\wedge^m\scV) = c_1(K_{X/B}).$ We emphasise that $\rho$ depends only on $\omega_X$, and hence is independent of $k$. While its construction uses only $(\omega_X)_{\scV}$ and not its horizontal component, in general $\rho$ itself, being the curvature of a hermitian metric on the line bundle $\wedge^m \scV$, will have a horizontal component. Remark also that $\rho$ restricts to the Ricci curvature of the restriction of $\omega_X$ on any fibre, since $\wedge^m\scV \cong -K_{X/B}$ restricts to $-K_{X_b}$ on any fibre and curvature commutes with pullback.

\subsection{The optimal symplectic connection equation} 

We continue following the notation introduced above, which we briefly summarise. We begin with a holomorphic submersion $\pi: (X,H) \to (B,L)$ with $\omega_X \in c_1(H)$ cscK on each fibre and $\omega_B \in c_1(L)$ K\"ahler. The symplectic curvature of $\omega_X$ is denoted $\mu^*(F_{\scH})$. The metric induced by $\omega_X$ on the top exterior power of the vertical tangent bundle $\wedge^m\scV$ has curvature $\rho$, which has a horizontal component $\rho_{\scH}$. $\omega_X$ also defines a fibrewise, vertical Laplacian operator $\Delta_{\scV}$. Finally, the K\"ahler metric $\omega_B$ on $B$ induces a contraction operator $\Lambda_{\omega_B}$ on horizontal forms. 

\begin{definition} We say that $\omega_X$ is an \emph{optimal symplectic connection} if $$p(\Delta_{\scV}(\Lambda_{\omega_B} \mu^*(F_{\scH})) + \Lambda_{\omega_B} \rho_{\scH})=0.$$ \end{definition}

As described in the Introduction, we think of this as a canonical choice of fibrewise cscK metric. For this to be reasonable, the equation should only depend on the behaviour of $\omega_X$ restricted to each fibre. To make this precise, we first show that solutions remain solutions upon the addition of a pullback from $B$.

\begin{lemma} If $\omega_X$ is an optimal symplectic connection, then so is $\omega_X + \pi^*{\nu}$ for any closed $(1,1)$-form $\nu$ on $B$. \end{lemma}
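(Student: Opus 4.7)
The plan is to verify, ingredient-by-ingredient, that every geometric object appearing in the optimal symplectic connection equation
$$p(\Delta_{\scV}(\Lambda_{\omega_B} \mu^*(F_{\scH})) + \Lambda_{\omega_B} \rho_{\scH})$$
is unchanged under the replacement $\omega_X \mapsto \omega_X + \pi^*\nu$. Since $\nu$ is closed, $\omega_X + \pi^*\nu$ remains a relatively K\"ahler representative of $c_1(H)$, and the lemma will follow from this invariance.

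First I would check that the horizontal-vertical splitting of $TX$ is unchanged. For any $u \in T_xX$ and $v \in \scV_x$, we have $d\pi(v) = 0$, and so $(\pi^*\nu)(u,v) = \nu(d\pi(u), d\pi(v)) = 0$. Hence the $(\omega_X + \pi^*\nu)$-orthogonal complement to $\scV$ coincides with the $\omega_X$-orthogonal complement, i.e.\ the horizontal subbundle $\scH$ is the same. Consequently the induced vertical/horizontal decomposition of all tensors on $X$ is unchanged.

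Next I would observe that the restriction to each fibre is preserved: $(\omega_X + \pi^*\nu)|_{X_b} = \omega_X|_{X_b}$, since $\pi^*\nu$ vanishes on any fibre. Several consequences follow immediately. The fibrewise cscK metrics $\omega_b$, the fibrewise Lichnerowicz operators $\scD_b^*\scD_b$, and hence the bundle $E$, the function space decomposition \eqref{eq:function-decomposition}, and the $L^2$-projection $p$ are all unchanged. The vertical Laplacian $\Delta_{\scV}$, being defined fibrewise from $\omega_b$, is unchanged. The hermitian metric on $\scV$ induced by $\omega_X$ only depends on $\omega_X|_{\scV \otimes \bar\scV}$, which is unchanged; hence so is the induced hermitian metric on $\wedge^m\scV$ and its curvature $\rho$, and in particular the horizontal component $\rho_{\scH}$.

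It remains to treat the symplectic curvature. The Ehresmann connection associated to a symplectic connection is determined purely by the horizontal distribution $\scH$, which as shown above is unchanged. Therefore the symplectic curvature $F_{\scH} \in \Omega^2(B, \mathrm{Ham}(\scV))$ is unchanged as a two-form with values in vertical Hamiltonian vector fields. The fibrewise comoment map $\mu^*$ uses only $\omega_b$, which is also unchanged, so $\mu^*(F_{\scH})$ is unchanged. Finally, $\Lambda_{\omega_B}$ depends only on the K\"ahler metric $\omega_B$ on the base. Putting everything together, every constituent of the expression
$$p\bigl(\Delta_{\scV}(\Lambda_{\omega_B}\mu^*(F_{\scH})) + \Lambda_{\omega_B}\rho_{\scH}\bigr)$$
agrees for $\omega_X$ and $\omega_X + \pi^*\nu$, and the conclusion is immediate. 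There is no substantive obstacle here: the content of the lemma is the bookkeeping observation that the definition of an optimal symplectic connection is built entirely from fibrewise data and from the base metric $\omega_B$, together with the horizontal distribution which is unaffected by pullbacks from $B$.
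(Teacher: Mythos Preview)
Your proof is correct and follows essentially the same approach as the paper: both verify ingredient-by-ingredient that every object in the equation is unchanged under adding $\pi^*\nu$. The only minor difference is that the paper cites the minimal coupling identity (Lemma~\ref{minimal-coupling}) to see that $\mu^*(F_{\scH})$ is unchanged, whereas you argue directly that the horizontal distribution $\scH$ (and hence the Ehresmann connection and its curvature) is unaffected because $\pi^*\nu$ vanishes on vertical vectors; this is an equally valid and slightly more elementary route to the same conclusion.
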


\begin{proof} This follows simply because all objects involved are unchanged when one adds a form from $B$. Explicitly, the symplectic curvature $\mu^*(F_{\scH})$ is unchanged by the addition of a form due to the minimal couple equation of Lemma \ref{minimal-coupling}. The construction of $\rho$ only involves the restriction of the $\omega_X$ to the fibres $X_b$ so is also unchanged. The horizontal component $\rho_{\scH}$ is then unchanged since pulling back a form from $B$ does not change the vertical horizontal decomposition induced by $\omega_X$ thought of as a symplectic connection. Finally, since the vertical Laplacian and projection operators also only involve the restriction to a given fibre, they too are unchanged. \end{proof}

A natural question our work raises is, given some initial fibrewise cscK metric $\omega_X$, how does one find an optimal symplectic connection? Here we make some remarks on the steps needed to at least phrase this as a PDE on $E$. If $\omega$ is a cscK metric in some fixed class, then any other cscK metric is of the form $g^* \omega$ for some automorphism $g$, by Theorem \ref{thm:uniqueness}. Moreover, $g$ can be written as the time one flow of a vector field $v_g$ in the Lie algebra of automorphism group of the manifold. In our case, when we change the fibrewise holomorphy potentials this means any other cscK metric on a given fibre is determined from by some $v_g \in \Lie(\Aut(X_b,H_b)).$ Moreover, $v_g$ has holomorphy potential $h_b\in C^{\infty}_0(X_b)$, and so we can determine any other smooth fibrewise metric from some function in $h \in C^{\infty}_E (X)$, i.e. from some global section of $E$. 

By the $\bar \partial \partial $-Lemma, if we have another fibrewise metric, it can be written in the form $\omega_{\phi} = \omega_X + \ddc \phi$, for some $\phi : X \to \R.$ From what we have said above, if this metric is also fibrewise cscK, then $\phi$ is uniquely determined by a function $h \in C^{\infty}_E (X)$. However, it is not true that $\phi = h.$ Explicitly, $\phi_{|X_b} = \int_0^1 f_t^*( h_b) dt,$ where $f_t$ is the time $t$ flow on $X_b$ produced from $h_b$ \cite[Example 4.26]{szekelyhidi-book}. Now, if we are looking to solve the optimal symplectic connection, we wish to solve this \textit{with respect to $\omega_{\phi}$}, not $\omega_X$. A crucial point is that we are thinking of $E$ is a fixed bundle of fibrewise vector fields, but its realisation in terms of functions on $X$ depends on the choice of fibrewise cscK metric. If we denote by $E_{\phi}$ the new realisation of this bundle in terms of $\omega_{\phi}$ fibrewise holomorphy potentials, we have an isomorphism $P : E \to E_{\phi}$, given by the change in holomorphy potentials. To get a PDE on our initial bundle $E$, we then need to apply the inverse of $P$. This seems like the natural strategy to try to setup and solve the optimal symplectic connection in general, starting from some arbitrary fibrewise cscK metric. 

Of course, for this strategy to be sensible, one needs to first be able to pick a reference fibrewise cscK metric. It is a non-trivial consequence of the deformation theory of cscK manifolds, due to Br\"onnle and Sz\"ekelyhidi independently \cite{TB,GS}, that fibrewise cscK metrics exist, as we now establish. This was already noted by Li-Wang-Xu  in the setting of K\"ahler-Einstein fibrations \cite[Section 4]{lwx2} .

\begin{lemma} Let $(X,H) \to (B,L)$ be a fibration such that $(X_b,H_b)$ admits a cscK metric for all $b \in B$. Then there exists an $\omega_X \in c_1(H)$  which is relatively cscK. \end{lemma}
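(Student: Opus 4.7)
The plan is to start from a reference relatively K\"ahler metric and to deform it fibrewise to a cscK metric, using the deformation theory of cscK manifolds cited in the statement. Since $H$ is relatively ample, there exists a reference $\omega_0\in c_1(H)$ whose restriction to each fibre is K\"ahler, for example as the Chern curvature of a suitable Hermitian metric on $H$. The topological constant $\hat S_b$ forced on any cscK metric on $(X_b,H_b)$ depends only on $c_1(H_b)$ and the Chern classes of $X_b$, hence is locally constant on $B$. The task is therefore to find a smooth $\varphi:X\to\R$ such that $S(\omega_0|_{X_b}+\ddc\varphi|_{X_b})=\hat S_b$ for all $b\in B$.

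The next step is to apply the Br\"onnle--Sz\'ekelyhidi deformation theorem locally on $B$. This theorem says that if $(X_{b_0},\omega_{b_0})$ is cscK then in any sufficiently small neighborhood of $b_0$ in a smooth family of complex structures, the fibres that admit a cscK metric do so in a smoothly varying fashion. The proof goes by constructing a finite-dimensional Kuranishi-type slice for the cscK moduli problem, transverse to the action of the reductive group $\Aut(X_{b_0},H_{b_0})$, and solving the transverse part of the equation via an implicit function theorem based on the ellipticity of the Lichnerowicz operator; the obstruction lives only on the finite-dimensional slice, and under the hypothesis that every fibre admits a cscK metric it must vanish identically. Applied to $\pi:(X,H)\to(B,L)$ starting from $\omega_0$, this yields for each $b_0\in B$ an open neighborhood $U\ni b_0$ and a smooth function $\varphi_U:\pi^{-1}(U)\to\R$ such that $\omega_0+\ddc\varphi_U$ is fibrewise cscK on $\pi^{-1}(U)$.

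Finally, one must globalise. Covering $B$ by such neighborhoods $\{U_\alpha\}$ with associated local potentials $\varphi_\alpha$, on overlaps the metrics $\omega_0+\ddc\varphi_\alpha$ and $\omega_0+\ddc\varphi_\beta$ are two fibrewise cscK metrics in the same relative class, and so by fibrewise uniqueness (Theorem \ref{thm:uniqueness}) they are related by a smoothly varying family of fibrewise automorphisms. The main obstacle is patching the $\varphi_\alpha$ across overlaps into a single global smooth $\varphi$, which is delicate because $\dim\Aut(X_b,H_b)$ may jump with $b$ and so the cscK moduli on each fibre has non-uniform structure. The resolution is that the Br\"onnle--Sz\'ekelyhidi slice construction can be made compatibly with inclusions of automorphism groups, so choosing the local solutions to lie in a common slice transverse to the fibrewise automorphism orbits makes the overlap discrepancies controllable; combining this with a partition-of-unity argument, together with the observation that adding a pullback of a form from $B$ does not affect the fibrewise cscK condition and so provides additional flexibility, yields the required global $\omega_X\in c_1(H)$.
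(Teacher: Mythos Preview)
Your local argument (steps one and two) matches the paper's proof: both start from a reference relatively K\"ahler form, invoke the Br\"onnle--Sz\'ekelyhidi deformation theory to obtain, over small open sets $U\subset B$, smooth potentials $\varphi_U$ making $\omega_0+\ddc\varphi_U$ fibrewise cscK, and then appeal to uniqueness of cscK metrics on overlaps.

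The gap is in your globalisation step. You correctly flag that a naive partition-of-unity average $\sum_\alpha\rho_\alpha\varphi_\alpha$ fails, since the cscK equation is nonlinear, but your proposed fix---making the local solutions lie in ``a common slice transverse to the fibrewise automorphism orbits''---is not a concrete argument, and it is not clear how to carry this out globally over $B$ (your own remark about the jump in $\dim\Aut(X_b,H_b)$ shows the difficulty). The paper's gluing mechanism is different and worth internalising: on an overlap $U_j\cap U_{j'}$, uniqueness of cscK metrics says that on each fibre $X_b$ the metric $\omega_{j'}|_{X_b}$ is the pullback of $\omega_j|_{X_b}$ under the time-one flow of some real holomorphic vector field. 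Pulling back along the time-$t$ flow gives a \emph{smooth path} $\phi_{j,t}$ of fibrewise K\"ahler potentials, with $\omega_j+\ddc\phi_{j,t}$ fibrewise cscK for every $t\in[0,1]$, interpolating between the two local solutions. One then chooses a cutoff $\rho_{jj'}:U_j\cap U_{j'}\to[0,1]$ on the base and replaces $\omega_j$ by $\omega_j+\ddc\phi_{j,\rho_{jj'}(b)}$. Because $\rho_{jj'}$ depends only on $b$, the restriction to each fibre is exactly one of the cscK metrics along the path, so the glued form is still fibrewise cscK. Iterating over a finite cover completes the proof. The point is not a partition of unity in the space of potentials but an interpolation along a path that stays inside the cscK locus on each fibre; this is the missing idea in your write-up.
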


\begin{proof} Fix a distinguished point $b_0 \in B$. Then over a sufficiently small neighbourhood $U_0$ of $b_0$ in $B$, there is a relatively K\"ahler metric $\omega$ such that $\omega|_{\pi^{-1}(U_0)}$ is cscK, by Br\"onnle and Sz\"ekelyhidi's deformation theory of cscK manifolds \cite{TB,GS}. Slightly shrinking $U$, by the same deformation theory, there is a $\phi \in C^{\infty}(U,\R)$ such that $\omega+\ddb\phi$ is cscK on \emph{each} fibre over $U$. Indeed, Br\"onnle and Sz\"ekelyhidi's work implies that the cscK metric can be taken to vary smoothly in families locally, which essentially just uses that the zero set of the moment map they consider is connected. Cover $B$ by finitely many such charts $U_j$ each with relatively cscK metric $\omega_j$. While it is not true that $\omega_j = \omega_{j'}$ on $\pi^{-1}(U_j \cap U_{j'})$, if we let $$\omega_j - \omega_{j'} = \ddb \phi_j,$$ then there is a smooth function $\phi_{j,t}$ such that $\omega_j - \ddb \phi_{j,t}$ is a cscK metric for all $t \in [0,1]$ with $\phi_{j,0}=0$ and $\phi_{j,1}=\phi_j$. Indeed, on each fibre, by uniqueness of cscK metrics $\omega_{j'}$ is the pullback of $\omega_{j'}$ through the time one flow of a real holomorphic vector field, so pulling back $\omega_{j'}$ through the time $t$ flow produces such a smooth family of fibrewise K\"ahler potentials. One can then use a cutoff function $\rho_{jj'}$ on $U_j \cap U_{j'}$ and modify $\omega_j$ by adding $\ddb\pi^*(\rho_{jj'})\phi_j$ to glue the metrics. Since $\pi^*\rho_{jj'}$ on $X$ is the pullback of a function on $U_j \cap U_{j'}$, the resulting form will still be relatively K\"ahler. Iterating this procedure over all intersections will produce a relatively cscK metric, as required.\end{proof}

Just as the cscK equation reduces to the K\"ahler-Einstein equation under the appropriate topological hypothesis, the optimal symplectic connection equation simplifies under an appropriate topological condition. We begin with the following elementary Lemma.

\begin{lemma}\label{restriction-of-forms} Suppose $\omega_X, \tilde \omega_X$ are arbitrary closed $(1,1)$-forms on $X$ such that for all $b \in B$ $$\omega_X|_{X_b} = \tilde\omega_X|_{X_b}.$$ Then $$\omega_X = \tilde \omega_X + \pi^*{\nu}$$ for some closed $(1,1)$-form $\nu$ on $B$.
\end{lemma}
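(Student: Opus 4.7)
Set $\eta := \omega_X - \tilde\omega_X$, a closed $(1,1)$-form on $X$ with $\eta|_{X_b}=0$ for every $b\in B$; the goal is to produce a closed $(1,1)$-form $\nu$ on $B$ with $\eta = \pi^*\nu$. The plan is to work in coordinates adapted to $\pi$, read off algebraic consequences of $d\eta=0$ combined with the fibrewise vanishing, and then globalise using compactness and connectedness of the fibres.

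In local coordinates $(z_\alpha,w_i)$ with $\pi(z,w)=z$, decompose
\[
\eta = \eta_{i\bar j}\,dw_i\wedge d\bar w_j + \eta_{i\bar\beta}\,dw_i\wedge d\bar z_\beta + \eta_{\alpha\bar j}\,dz_\alpha\wedge d\bar w_j + \eta_{\alpha\bar\beta}\,dz_\alpha\wedge d\bar z_\beta.
\]
The hypothesis $\eta|_{X_b}=0$ forces $\eta_{i\bar j}=0$. Expanding $d\eta = \partial\eta+\bar\partial\eta=0$ by bidegree and collecting basis elements yields: $\partial_{\bar k}\eta_{i\bar\beta}=0$ and $\partial_k\eta_{\alpha\bar j}=0$ (so the mixed coefficients are respectively fibrewise holomorphic and antiholomorphic); the symmetries $\partial_k\eta_{i\bar\beta}=\partial_i\eta_{k\bar\beta}$ and its complex conjugate; and the mixed identity $\partial_k\eta_{\alpha\bar\beta}=\partial_\alpha\eta_{k\bar\beta}$ together with its conjugate, which will govern how the horizontal coefficients vary along the fibres.

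The core of the argument is to show that the mixed coefficients vanish. The datum $\eta_{i\bar\beta}\,dw_i$ (for fixed $\beta$) restricts on each fibre to a holomorphic $(1,0)$-form which, by the symmetry above, is locally $\partial_w$-exact, say $\eta_{\bar\beta}=\partial_w h_\beta$; the condition $\partial_i\partial_{\bar k}h_\beta=0$ forces the fibrewise restriction of $h_\beta$ to be pluriharmonic, hence constant on the compact connected fibre $X_b$. This gives $\eta_{i\bar\beta}\equiv 0$, and the conjugate argument gives $\eta_{\alpha\bar j}=0$. With the mixed part eliminated, the identities $\partial_k\eta_{\alpha\bar\beta}=\partial_\alpha\eta_{k\bar\beta}=0$ and their conjugates show that the horizontal coefficients $\eta_{\alpha\bar\beta}$ are constant along each fibre, so they descend to smooth functions on $B$. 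Assembling over charts yields a smooth $(1,1)$-form $\nu$ on $B$ with $\eta=\pi^*\nu$, and closedness of $\nu$ then follows from $d\eta=0$ together with the injectivity of $\pi^*$ on forms induced by the surjectivity of $d\pi$.

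The main obstacle I anticipate is globalising the potentials $h_\beta$ along a compact fibre from their purely local descriptions: Poincar\'e's lemma only produces $h_\beta$ on coordinate patches, and reconciling the local data into a well-defined fibrewise function, so that the pluriharmonicity argument can be applied globally on $X_b$, is the delicate step. This is where the compactness and connectedness of the fibres, and the fact that the various coordinate expressions are restrictions of a single global $\eta$ on $X$, have to be exploited simultaneously.
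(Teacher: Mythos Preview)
Your approach is genuinely different from the paper's, which first argues that $[\omega_X-\tilde\omega_X]\in\pi^*H^{1,1}(B,\R)$, then applies the global $\partial\bar\partial$-Lemma on the compact K\"ahler manifold $X$ to write $\omega_X-\tilde\omega_X=i\partial\bar\partial\phi$ and observes that $\phi$ is fibrewise constant. Your coordinate computation is more elementary in spirit, but the gap you flag at the end is real and cannot be closed using only compactness of the fibres. The fibrewise restriction of $\eta_{i\bar\beta}\,dw_i$ is a \emph{closed holomorphic} $(1,0)$-form on $X_b$, and your pluriharmonic-potential argument amounts to trying to show it is globally $\partial$-exact on $X_b$; but on a compact K\"ahler manifold a holomorphic $1$-form is harmonic, hence $\partial$-exact only if it vanishes. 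So what you really need is $H^{1,0}(X_b)=0$, and no amount of patching the local potentials $h_\beta$ will get around this.

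In fact, without that hypothesis the lemma as stated is false. Take $X=E\times E'$ a product of elliptic curves with $\pi$ the second projection, and set $\eta=\alpha\wedge\bar\beta+\bar\alpha\wedge\beta$ for nonzero $\alpha\in H^{1,0}(E)$, $\beta\in H^{1,0}(E')$. This is a real closed $(1,1)$-form with $\eta|_{X_b}=0$ for every $b$, yet it is not pulled back from $E'$. The paper's own proof has a parallel gap: its opening assertion that $[\omega_X-\tilde\omega_X]$ lies in $\pi^*H^{1,1}(B,\R)$ does not follow from fibrewise vanishing alone, as the same example shows. In every application the paper makes of the lemma the fibres are Fano (or projective space), so $H^{1,0}(X_b)=0$ by Kodaira vanishing; under that extra hypothesis both your argument and the paper's go through cleanly.
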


\begin{proof} 

It follows from the hypotheses that $$[\omega_X] = [\tilde \omega_X] + \pi^*\beta$$ for some $\beta \in H^{1,1}(X,\R)$. Picking an arbitrary form $\nu_{\beta} \in \beta$ and replacing $\tilde \omega_X$ with $\tilde\omega_X + \nu_{\beta}$, it is enough to prove the Lemma in the case  $[\omega_X] = [\tilde \omega_X]$. Then by the $\partial\bar\partial$-Lemma, we have $\omega_X  = \tilde \omega_X + \ddb \phi$ for some $\phi \in C^{\infty}(X)$. Write $\phi = \phi_B + \phi_X$ with $\phi_B \in C^{\infty}(B)$ and $\phi_X \in C^{\infty}_0(X)$, where the decomposition is induced by any reference relatively K\"ahler metric. Then the result follows immediately since $\phi_X = 0$, as $\omega_X|_{X_b} = \tilde\omega_X|_{X_b}$ and $\phi_X$ integrates to zero over the fibres.\end{proof}

\begin{proposition}

Suppose $\omega_X \in c_1(H)$ is a relatively cscK metric such that $$\lambda[\omega_b] =  c_1(X_b)$$ for all $b \in B$ and for some $\lambda \in \R$ independent of $b$. Then $\omega_X$ is an optimal symplectic connection if and only if $$p(\Delta_{\scV}(\Lambda_{\omega_B} \mu^*(F_{\scH})) + \Lambda_{\omega_B} \mu^*(F_{\scH}))=0.$$
\end{proposition}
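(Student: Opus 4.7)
The plan is to substitute for $\rho_{\scH}$ in the optimal symplectic connection equation using the assumed fibrewise Kähler-Einstein condition, reducing the $\rho$-term to a scalar multiple of $\Lambda_{\omega_B}\mu^*(F_{\scH})$ modulo a pullback from the base, and then to verify that the resulting equation matches the stated Fano form using the spectral behaviour of $\Delta_{\scV}$ on fibrewise holomorphy potentials.

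First I would observe that, since $\omega_X$ is fibrewise cscK and $\lambda[\omega_b] = c_1(X_b)$, Remark \ref{KEorcscK} upgrades each fibre to Kähler-Einstein with $\Ric\omega_b = \lambda\omega_b$, so $\rho|_{X_b} = \lambda\omega_b = (\lambda\omega_X)|_{X_b}$ for every $b\in B$. Lemma \ref{restriction-of-forms} then supplies a closed $(1,1)$-form $\gamma$ on $B$ with $\rho = \lambda\omega_X + \pi^*\gamma$. Taking the horizontal component (noting $\pi^*\gamma$ is purely horizontal) and substituting $(\omega_X)_{\scH} = \mu^*(F_{\scH}) + \pi^*\beta$ from the minimal coupling Lemma \ref{minimal-coupling}, I get
\[
\rho_{\scH} = \lambda\mu^*(F_{\scH}) + \pi^*(\lambda\beta + \gamma).
\]
Contracting with $\omega_B$ and applying $p$, the pullback term is annihilated (pullbacks from $B$ lie in the $C^\infty(B)$-summand of the splitting \eqref{eq:function-decomposition}, which is complementary to the image of $p$), so writing $g = \Lambda_{\omega_B}\mu^*(F_{\scH})$ the OSC equation reduces to $p(\Delta_{\scV} g + \lambda g) = 0$.

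The final step, which I expect to be the technical core, is identifying this with $p(\Delta_{\scV} g + g) = 0$. My approach is spectral: $\Delta_{\scV}$ respects the splitting $C^\infty(X) = C^\infty(B)\oplus C^\infty_E(X)\oplus C^\infty_R(X)$, annihilating pullbacks, preserving $C^\infty_R(X)$ by self-adjointness of $\Delta_b$ (and Stokes for the mean-zero condition), and acting on $C^\infty_E(X)$ as multiplication by a fixed scalar $c$ by the Matsushima-Lichnerowicz identity on KE fibres — mean-zero fibrewise holomorphy potentials are eigenfunctions of $\Delta_b$ with a single eigenvalue determined uniformly by $\lambda$. Consequently $p(\Delta_{\scV} g) = c\,p(g)$, and both the substituted OSC equation and the Fano form become scalar multiples of $p(g)=0$.

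The main obstacle will be the eigenvalue bookkeeping in this last step: one must confirm that the scalar coefficients $c+\lambda$ and $c+1$ are both nonzero so that the two projected equations really define the same zero locus $\{p(g) = 0\}$, rather than one being vacuous. In the genuinely Fano range $\lambda > 0$ this is a direct check from the Lichnerowicz eigenvalue formula, while for $\lambda \leq 0$ the bundle $E$ is itself zero — the fibres admit no nontrivial holomorphic vector fields vanishing somewhere — so $p$ is identically zero and both equations are vacuously equivalent.
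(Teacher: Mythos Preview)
Your approach is essentially the same as the paper's: use Remark \ref{KEorcscK} to upgrade fibrewise cscK to fibrewise K\"ahler--Einstein, then invoke Lemma \ref{restriction-of-forms} to conclude that $\rho$ and $\lambda\omega_X$ differ by a pullback from $B$, so that $\rho_{\scH}$ and $\lambda\mu^*(F_{\scH})$ agree modulo a pullback and the projection $p$ kills the discrepancy. The paper's proof is terser and simply writes $\mu^*(F_{\scH}) = \rho_{\scH}$, apparently suppressing the factor $\lambda$; your additional spectral step---observing that $p\circ\Delta_{\scV} = c\cdot p$ on $C^\infty_E(X)$ via Matsushima--Lichnerowicz, so that both projected equations reduce to $p(g)=0$ for $\lambda>0$ and are vacuous for $\lambda\leq 0$---is precisely what is needed to reconcile the coefficient $\lambda$ with the coefficient $1$ in the stated Fano form, and makes the argument more complete than the paper's own version.
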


\begin{proof} This is an immediate consequence of what we have already proved. Firstly, since $\omega_b$ is cscK, it is automatically K\"ahler-Einstein by Remark \ref{KEorcscK} and hence $\lambda\omega_b = \Ric \omega_b$ by the topological hypothesis of the Theorem. It follows from Lemma \ref{restriction-of-forms} that $\omega_X - \rho = \pi^*\nu$ for some closed $(1,1)$-form $\nu$ on $\beta$, so $\mu^*(F_{\scH}) = \rho_{\scH}$, as required.
 \end{proof}
 
The only interesting case of the above is when $\lambda>0$, so that the fibres $X_b$ are all Fano manifolds. Indeed, in the other two cases the fibres are either Calabi-Yau or of general type, and hence $\Aut(X_b,H_b)$ is discrete for all $b \in B$. In these cases, a fibrewise cscK metric is uniquely determined, and the projection operator $p$ is the zero map, so the optimal symplectic connection condition is trivial.

\begin{definition} The \emph{relative automorphism group} $\Aut(X/B,H)$ is defined to be $$\{g \in \Aut(X,H): \ \pi\circ g = \pi \}.$$\end{definition}

\begin{proposition}\label{prop:invariance}

Suppose $g \in \Aut(X/B,H)$, and let $\omega_X$ be an optimal symplectic connection. Then $g^*\omega_X$ is also an optimal symplectic connection.

\end{proposition}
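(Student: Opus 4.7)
The plan is to verify that the optimal symplectic connection operator
$$\Psi(\omega_X) := p\bigl(\Delta_{\scV}(\Lambda_{\omega_B}\mu^*(F_{\scH})) + \Lambda_{\omega_B}\rho_{\scH}\bigr)$$
is equivariant for the action of $\Aut(X/B,H)$, in the sense that $\Psi(g^*\omega_X) = g^*\Psi(\omega_X)$ for every $g\in\Aut(X/B,H)$. Granting this, the conclusion is immediate: $\Psi(\omega_X)=0$ forces $\Psi(g^*\omega_X)=g^*0=0$. So the task reduces to checking that each ingredient of $\Psi$ transforms by $g^*$ under the simultaneous replacement $\omega_X \mapsto g^*\omega_X$.

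The first step is to exploit the hypothesis $\pi\circ g = \pi$, which says that $g$ covers the identity on $B$ and hence preserves each fibre: $g$ restricts to a biholomorphism $g_b:X_b\to X_b$ lying in $\Aut(X_b,H_b)$. Consequently $dg$ preserves the vertical bundle $\scV=\ker d\pi$, and since the horizontal distribution is characterised intrinsically by the symplectic connection, $dg$ sends the $g^*\omega_X$-horizontal distribution to the $\omega_X$-horizontal distribution. From this one reads off the equivariance of the symplectic curvature: $F_{\scH}^{g^*\omega_X} = g^*F_{\scH}^{\omega_X}$ as a two-form on $B$ with values in Hamiltonian vertical vector fields (here using that $g$ is the identity on $B$, so the identification of vector fields on $B$ with their horizontal lifts is compatible). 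The fibrewise comoment map uses only the data of $\omega_X|_{X_b}$ and the fibrewise mean, both of which transform via $g_b^*$, so $\mu^*(F_{\scH}^{g^*\omega_X}) = g^*\mu^*(F_{\scH}^{\omega_X})$. For the Ricci-like form $\rho$, note that $\rho$ is the curvature of the metric induced by $\omega_X$ on $\wedge^m\scV$; since $g$ is a biholomorphism preserving $\scV$ and the induced hermitian metric transforms by $g^*$, the curvature does too, and its horizontal component $\rho_{\scH}$ transforms equivariantly because the horizontal/vertical splittings correspond under $g$.

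What remains are the operators $\Delta_{\scV}$, $\Lambda_{\omega_B}$, and $p$. The vertical Laplacian only depends on the fibrewise metric, so on each fibre $\Delta_{\scV}^{g^*\omega_X}\circ g^* = g^*\circ\Delta_{\scV}^{\omega_X}$ by the standard isometry-equivariance of Laplacians. The horizontal contraction $\Lambda_{\omega_B}$ requires only the fixed metric $\omega_B$ on $B$ together with the decomposition into horizontal and vertical pieces; since $g$ is the identity on $B$ and sends horizontal to horizontal as above, $\Lambda_{\omega_B}$ commutes with $g^*$ on horizontal forms. Finally, the projection $p$ onto $C^\infty_E$ is determined by the fibrewise Lichnerowicz kernel and the fibrewise $L^2$-inner product, both of which transform equivariantly under $g_b^*$; explicitly, $f$ lies in $C^\infty_E$ for $\omega_X$ if and only if $g^*f$ lies in $C^\infty_E$ for $g^*\omega_X$, and the $L^2$-orthogonal complements correspond, so $p_{g^*\omega_X}\circ g^* = g^*\circ p_{\omega_X}$. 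Assembling these equivariances gives $\Psi(g^*\omega_X) = g^*\Psi(\omega_X)$, as required.

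The proof is essentially a bookkeeping exercise: there is no analytic difficulty, only the (mild) conceptual point that every geometric object built into $\Psi$ is canonically associated to the relatively Kähler metric and the fixed base metric $\omega_B$, and hence is natural under fibre-preserving biholomorphisms of $X$. The step most worth stating carefully is the compatibility of the projection $p$ with $g^*$, since the splitting \eqref{eq:function-decomposition} depends on the chosen fibrewise metric and one must check that the $L^2$-orthogonal decomposition really does commute with pullback by an isometry; this is the only place where the argument requires more than the naturality of differential-geometric constructions.
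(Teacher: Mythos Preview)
Your proof is correct and follows essentially the same approach as the paper: both arguments establish the equivariance $\Psi(g^*\omega_X)=g^*\Psi(\omega_X)$ by checking that each constituent of the operator---the symplectic curvature, the form $\rho$ and its horizontal part, the vertical Laplacian, the horizontal contraction $\Lambda_{\omega_B}$, and the projection $p$---transforms by pullback under a fibre-preserving biholomorphism. Your treatment is in fact slightly more detailed than the paper's on the behaviour of the horizontal distribution and the $L^2$-orthogonal decomposition defining $p$, but the underlying strategy is identical.
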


\begin{proof}

We show directly that that each of the quantities in the optimal symplectic connection equation are a pullback of the quantities using the pullback metric. To simplify notation, we shall add a $g$ subscript to all quantities measured using $g$, so that $F_{\scH_g}$ is the symplectic curvature of the symplectic connection $g^*\omega_X$. Thus what we wish to prove is that $$g^*(p(\Delta_{\scV}(\Lambda_{\omega_B} \mu^*(F_{\scH})) + \Lambda_{\omega_B} \rho_{\scH})) = p_g(\Delta_{\scV_g}(\Lambda_{\omega_B} \mu^*(F_{\scH_g})) + \Lambda_{\omega_B} \rho_{\scH_g}),$$ from which it would follow that the right hand side vanishes if $\omega_X$ is an optimal symplectic connection.

First of all note that, for a function $f\in C^{\infty}(X)$, we have $g^*(p(f)) = p_g(g^*f)$. Indeed, fibrewise holomorphy potentials with respect to $g^*\omega_X$ are just the pullback via $g$ of the holomorphy potentials for $\omega_X$. Similarly, the Laplacian operator satisfies $\Delta_{\scV_g} (g^*f) = g^*(\Delta_{\scV}(f)).$ Since $g \in \Aut(X/B,H)$, the horizontal contraction is unchanged by $g$. The symplectic curvature is again just the pullback, since the curvature of a pullback connection is the pullback of the curvature (this can also be seen through Lemma \ref{minimal-coupling}). The fact that $\rho_{\scH_g}$ is the pullback of $\rho_{\scH}$ follows from a similar argument, and since $\rho$ is defined via taking a curvature, also uses that $g$ is holomorphic.
\end{proof}

The invariance of the equation under the action of $\Aut(X/B,H)$ immediately produces the following.

\begin{lemma} Suppose $\phi$ is a fibrewise holomorphy potential whose flow lies in $\Aut(X/B,H)$. Then $\phi $ is in the kernel of the linearisation of the optimal symplectic curvature operator at a solution. \end{lemma}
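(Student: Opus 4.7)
The plan is to exploit the $\Aut(X/B,H)$-invariance of the optimal symplectic connection equation proved in Proposition \ref{prop:invariance}, by differentiating along the one-parameter subgroup of automorphisms generated by $\phi$. This mirrors the classical argument that the kernel of the Lichnerowicz operator on a cscK manifold is precisely the space of holomorphy potentials, now relativised over $B$.

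Concretely, let $V$ denote the real holomorphic vector field on $X$ whose fibrewise holomorphy potentials with respect to $\omega_X$ are the functions $\phi|_{X_b}$. By hypothesis $V$ lies in the Lie algebra of $\Aut(X/B,H)$, so its time-$t$ flow $g_t$ is a smooth path in $\Aut(X/B,H)$ with $g_0 = \Id$. On each fibre, $(g_t)|_{X_b}$ is the flow on $(X_b,\omega_b)$ produced from $\phi|_{X_b}$, so the standard $\partial\bar\partial$-calculation (as in \cite[Example 4.26]{szekelyhidi-book}, already invoked earlier in this section) produces a smooth family of real-valued functions $\psi_t \in C^{\infty}(X,\R)$ with $g_t^*\omega_X = \omega_X + \ddc\psi_t$ and $\psi_0 = 0$. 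Since $\psi_t$ is determined by $g_t$ only up to the pullback of a function on $B$, we impose the normalisation $\int_{X/B}\psi_t\,\omega_X^m = 0$ for every $t$. With this choice, $\dot\psi_0$ restricts to $\phi|_{X_b}$ on each fibre and has vanishing fibrewise mean, so $\dot\psi_0$ agrees with $\phi$ as a section of $E$.

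Let $\mathcal{P}$ denote the optimal symplectic connection operator, viewed as a nonlinear map from K\"ahler potentials to sections of $E$, and let $L$ denote its linearisation at $\omega_X$. Proposition \ref{prop:invariance} gives $\mathcal{P}(\psi_t) = 0$ for all $t$, since $\omega_X$ is a solution and each $g_t^*\omega_X = \omega_X + \ddc\psi_t$ is therefore also a solution. Differentiating this identity at $t=0$ yields $L(\dot\psi_0) = L(\phi) = 0$, which is exactly the conclusion.

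The only potentially delicate step, and the main thing to check, is the identification $\dot\psi_0 = \phi$: a priori $\dot\psi_0$ agrees with $\phi$ on each fibre but may differ globally by the pullback of a function on $B$. This ambiguity is removed by the fibrewise mean-zero normalisation, because any pullback from $B$ with vanishing fibrewise mean is identically zero, while $\phi$ itself already has vanishing fibrewise mean by virtue of lying in $C^{\infty}_E(X,\R)$. Aside from this bookkeeping, the argument is a formal consequence of invariance plus differentiation.
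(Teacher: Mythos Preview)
Your proof is correct and takes essentially the same approach as the paper: both arguments use the $\Aut(X/B,H)$-invariance established in Proposition \ref{prop:invariance} and differentiate along the flow generated by $\phi$. The paper's proof is a single sentence (``This is an immediate consequence of the invariance of the equation, which means that the derivative along the direction induced by $\phi$ of the equation is zero''), whereas you have spelled out the construction of $\psi_t$ and the identification $\dot\psi_0 = \phi$ in detail; the extra care you take with the fibrewise mean-zero normalisation is a reasonable way to make the one delicate point explicit.
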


\begin{proof} This is an immediate consequence of the invariance of the equation, which means that the derivative along the direction induced by $\phi$ of the equation is zero. \end{proof}

We return to the linearisation of the optimal symplectic curvature equation in Section \ref{sec:approx}.

We will later use the following result regarding the group $\Aut(X/B,H)$, which generalises part of \cite[Proposition 3.10]{fibrations}. We use the notation $q: B \to \M$ for the moduli map, with $\Aut(q)$ defined in Definition \ref{def:map}.

\begin{proposition}\label{automsofX}Suppose that all automorphisms of $q$ lift to $X$.  Then there is a short exact sequence $$0 \to \Lie(\Aut(X/B,H) \to \Lie(\Aut(X,H)) \to \Lie(\Aut(q)) \to 0.$$ \end{proposition}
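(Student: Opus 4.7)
I would begin by constructing a Lie algebra descent map $\Phi: \Lie(\Aut(X,H)) \to \Lie(\Aut(B,L))$. For $v \in \mfh = \Lie(\Aut(X,H))$, the differential $d\pi(v)$ is a holomorphic section of $\pi^*TB$, and its restriction to any fibre $X_b$ lies in $H^0(X_b, T_bB \otimes_{\C} \scO_{X_b})$, which equals $T_bB$ because $X_b$ is compact and connected. The resulting assignment $b \mapsto d\pi(v)|_{X_b} \in T_bB$ defines a holomorphic vector field $\bar v$ on $B$. By construction, if $g_t$ is the flow of $v$ and $\bar g_t$ the flow of $\bar v$, then $\pi \circ g_t = \bar g_t \circ \pi$, so $g_t \in \Aut(X,H)$ is a lift of $\bar g_t$ in the pullback sense of Lemma \ref{lifting-moduli}. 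That lemma then forces $\bar g_t \in \Aut(q)$, so $\Phi$ takes values in $\Lie(\Aut(q))$.

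\textbf{Exactness on the left.} This should be immediate from the construction: $\Phi(v)=0$ if and only if $d\pi(v)=0$, i.e.\ $v$ is a section of $\scV$, and such holomorphic vector fields are precisely those integrating into $\Aut(X/B,H)$. Hence $\ker \Phi = \Lie(\Aut(X/B,H))$, which gives exactness of $0 \to \Lie(\Aut(X/B,H)) \to \Lie(\Aut(X,H)) \xrightarrow{\Phi} \Lie(\Aut(q))$.

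\textbf{Surjectivity is the main obstacle.} Here I plan to use the lifting hypothesis together with standard Lie group theory. Let $\Aut_{\pi}(X,H) \subset \Aut(X,H)$ denote the subgroup of automorphisms that descend to $B$. The Lie algebra construction above shows that every one-parameter subgroup of $\Aut(X,H)$ lies in $\Aut_{\pi}(X,H)$, so $\Aut_0(X,H) \subset \Aut_{\pi}(X,H)$ and the two groups share a common Lie algebra. Descent gives a smooth group homomorphism $\Aut_{\pi}(X,H) \to \Aut(B,L)$ whose kernel is $\Aut(X/B,H)$ and whose image lies in $\Aut(q)$ by Lemma \ref{lifting-moduli}; by the lifting hypothesis, this image is all of $\Aut(q)$. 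A surjective smooth morphism of Lie groups induces a surjection on Lie algebras, since the image is a Lie subgroup whose Lie algebra coincides with the image of the derivative at the identity, and in the surjective case this must equal the target Lie algebra. Applying this yields surjectivity of $\Phi$ onto $\Lie(\Aut(q))$, completing the exactness.
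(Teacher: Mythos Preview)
Your argument is correct, and it takes a genuinely different route from the paper's proof. The paper works directly with the long exact sequence in cohomology arising from $0 \to \scV \to TX \to \pi^*TB \to 0$, first decomposing an arbitrary $u \in H^0(X,TX^{1,0})$ as $\alpha(v) + u_q$, and then running a rational approximation argument (approximating $u$ by vector fields generating $\C^*$-actions) to check that when $u$ has a zero, both $v$ and $u_q$ can be chosen with the required zeros. Your approach bypasses this entirely: you observe that the descent map is already defined on all of $\Lie(\Aut(X,H))$, identify its kernel with $\Lie(\Aut(X/B,H))$ directly, and obtain surjectivity from the group-level lifting hypothesis via the general principle that a surjective smooth homomorphism of (second countable) Lie groups is automatically a submersion, hence surjective on Lie algebras.

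Two small points worth tightening. First, your justification of that last principle is slightly circular as written; the cleanest way to see it is Sard's theorem (a surjective smooth map has a regular value, and equivariance under translation then forces every point to be regular), or equivalently a Baire category argument showing the image cannot be a countable union of lower-dimensional immersed submanifolds. Second, when you invoke Lemma~\ref{lifting-moduli} you implicitly use that $\bar g_t \in \Aut(B,L)$ rather than merely $\Aut(B)$; this holds because $\bar v$ inherits a zero at $\pi(x_0)$ from the zero $x_0$ of $v$, so $\bar v \in \mfh_B$ and its flow lies in $\Aut_0(B,L)$. With these clarifications your proof is complete, and it is more economical than the paper's: the rational approximation step there is doing by hand what your Lie-theoretic argument gives for free. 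The paper's approach does make the decomposition $u = \alpha(v) + u_q$ more explicit, but since the sequence does not split canonically this is not needed for the proposition itself.
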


\begin{proof} We first prove an analogous statement for $H^0(X,TX^{1,0})$, which consists of global holomorphic vector fields which may not have a zero. We use the long exact sequence $$0 \to H^0(X,\scV)  \overset{\alpha}\to H^0(X,TX^{1,0})  \overset{\beta}\to H^0(X,\pi^*TB) \to\hdots $$ associated to the short exact sequence of equation \eqref{eq-ses}. 

Let $u \in H^0(X,TX^{1,0})$. By Lemma \ref{lifting-moduli}, $\beta(u)$ must be a vector field whose flow fixes the moduli map $q$. If we assume  all automorphisms of $q$ lift to $X$, then there is a lift of $\beta(u)$, say $u_q$, which satisfies $\beta(u_q) = \beta(u)$. As $\beta(u_q - u) = 0$, it follows that there is a $v \in H^0(X,\scV)$ with $\alpha(v) =u- u_q$. Hence we can write $u = \alpha(v + v') + u_q$, where $u_q$ is a lift of a holomorphic vector field on $B$ whose flow lifts to $X$. 

We now prove the refined result we are interested in, namely assuming that the vector field $u \in H^0(X,TX^{1,0})$ has a zero, we wish to write it as a sum of elements of $\Lie(\Aut(q))$ (or rather a lift of such an element) and $\Lie(\Aut(X/B),H)$. We first prove this for rational elements of $\Lie\Aut(X,H)$. Then the flow of $u$ generates a $\C^*$-action, since rational vector fields generate $\C^*$-actions. Following the proof above, if $u$ generates a $\C^*$-action, so must $\beta(u)$, and hence so must the lift $u_q$ of $\beta(u)$. As above, since $\beta(u_q - u) = 0$, it follows that there is a $v \in H^0(X,\scV)$ with $\alpha(v) =u- u_q$. Then since $u_q$ is rational, so is $u- u_q$, and hence so is $v$. Thus the flow of $v$ generates a $\C^*$-action. It is then clear that the flow of $v$ has a fixed point on \emph{each} fibre $X_b$, which means that it is an element of $\Lie(\Aut(X/B),H)$, while for the same reason $u_q$ is an element of $ \Lie(\Aut(q))$ (or again rather a lift of such an element). 

To obtain the conclusion for irrational vector fields, write $u = \alpha(v) + u_q$ where wish to show that $v$ has a zero on every fibre and $u_q$ has a zero. Certainly $u_q$ must have a zero just as above. Approximate $u$ by rational elements of the Lie algebra $u_j = \alpha(v_j) + u_{q,j}$, where by the previous paragraph we know that the desired conclusion holds for the $u_j$. We may assume that the $v_j$ converge to $v$, for example by first approximating $v$.. Since the $v_j$ generate a $\C^*$-action, they have a fixed point on \emph{each} fibre, which must vary continuously in $j$. Thus $v$ itself has a zero on each fibre, as required.

Slightly more explicitly, what we have shown is that the map $$\alpha: \Lie(\Aut(X/B,H) \to \Lie(\Aut(X,H))$$ is injective, via the hypothesis that all automorphisms of $q$ lift to $(X,H)$ we have shown that $\Lie(\Aut(X,H)) \to \Lie(\Aut(q))$ is surjective, and we finally we have shown that any element of $\Lie(\Aut(X,H))$ can be written as a sum of elements of  $\Lie(\Aut(q))$ and $\Lie(\Aut(X/B,H)$. This precisely says that the sequence $$0 \to \Lie(\Aut(X/B,H) \to \Lie(\Aut(X,H)) \to \Lie(\Aut(q)) \to 0$$  is exact. 
\end{proof}

There is no canonical way to split the produced short exact sequence in general.

\subsection{Extremal symplectic connections}\label{sec:extremal-symplectic} Just as extremal K\"ahler metrics generalise constant scalar curvature K\"ahler metrics by asking that the scalar curvature is a holomorphy potential, there is an analogous generalisation of optimal symplectic connections to what we call extremal symplectic connections. We will return to these connections in more detail in Section \ref{sec:general-case}; here we briefly give the relevant definition. As before, we assume that $\omega_X$ is cscK on each fibre.

\begin{definition}\label{def:extremalsymplectic} We say that $\omega_X$ is an \emph{extremal symplectic connection} if the function $$p(\Delta_{\scV}(\Lambda_{\omega_B} \mu^*(F_{\scH})) + \Lambda_{\omega_B} \rho_{\scH})$$  is a global holomorphy potential on $X$ with respect to $k{\omega_B} + \omega_X$ for all $k$. \end{definition} 

In particular, an extremal symplectic connection has an associated holomorphic vector field $\nu_E$, whose flow lies in $\Aut((X/B),H)$. 

The definition does not depend on choice of $k$, which we have implicitly taken large enough so that the form $k{\omega_B} + \omega_X$ is K\"ahler, as we show in Section \ref{sec:general-case}. There we will also provide an equivalent definition, which states that $\omega_X$ is an extremal symplectic connection if and only if $$\scR\circ p(\Delta_{\scV}(\Lambda_{\omega_B} \mu^*(F_{\scH})) + \Lambda_{\omega_B} \rho_{\scH})=0,$$ where $\scR$ is a linear differential operator which we will later define, and which plays the analogous role to that which the operator $\scD$ plays for extremal metrics.

\subsection{Projective bundles and coadjoint orbits}\label{coadjoint}

We now explain how the optimal symplectic equation reduces to the Hermite-Einstein condition on projective bundles and certain other fibrations constructed via principal bundles. We begin with the case of projective bundles.

\begin{lemma}\cite[Remark 6.13]{tian-book} Suppose $(X,\omega)$ is a K\"ahler-Einstein Fano manifold with $[\omega] = c_1(X)$. Then the eigenspace of the Laplacian with eigenvalue one consists precisely of the holomorphy potentials of average zero.\end{lemma}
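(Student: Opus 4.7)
The plan is to exploit the Lichnerowicz formula \eqref{eqn:lich}, which simplifies dramatically under the K\"ahler-Einstein hypothesis so that the operator $\scD_\omega^*\scD_\omega$ factors through the Laplacian. Since $(X,\omega)$ is K\"ahler-Einstein with $[\omega] = c_1(X)$, a comparison of cohomology classes forces $\Ric(\omega) = \omega$, and every K\"ahler-Einstein manifold is cscK, so $\nabla S(\omega) = 0$. Substituting these into \eqref{eqn:lich}, and using the standard identity $\langle \omega, \ddc\phi\rangle = -\Delta_\omega\phi$ in the convention under which $\Delta_\omega$ has non-negative spectrum, we obtain the key factorisation
$$
\scD_\omega^*\scD_\omega(\phi) \;=\; \Delta_\omega^2\phi - \Delta_\omega\phi \;=\; \Delta_\omega(\Delta_\omega - 1)\phi.
$$

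Having established this factorisation, recall that a function is a holomorphy potential precisely when it lies in $\ker\scD_\omega^*\scD_\omega$. Thus I would argue the two inclusions directly. If $\Delta_\omega\phi = \phi$, then integration gives $\int_X\phi\,\omega^n = \int_X \Delta_\omega\phi\,\omega^n = 0$, so $\phi$ has average zero, and the factorisation immediately yields $\scD_\omega^*\scD_\omega\phi = \Delta_\omega(\phi - \phi) = 0$, so $\phi$ is a holomorphy potential. Conversely, if $\phi$ is an average-zero holomorphy potential, then $\Delta_\omega(\Delta_\omega - 1)\phi = 0$, so $(\Delta_\omega - 1)\phi$ is harmonic; on a compact connected manifold the harmonic functions are the constants, so $\Delta_\omega\phi = \phi + c$ for some $c \in \R$. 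Integrating this relation against $\omega^n$ and using that both $\phi$ and $\Delta_\omega\phi$ have mean value zero forces $c = 0$, hence $\Delta_\omega\phi = \phi$.

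I expect the only real (and minor) obstacle to be bookkeeping: verifying that with the paper's sign conventions for $\Ric$ and $\ddc$ the K\"ahler-Einstein constant really is $+1$ (as opposed to some other normalisation involving $2\pi$), and that the identity $\langle \omega, \ddc\phi\rangle = -\Delta_\omega\phi$ holds with the stated sign. Once these normalisations are pinned down, no further analytic input is needed beyond elliptic regularity of $\scD_\omega^*\scD_\omega$, which ensures the kernel consists of smooth functions, and the compactness of $X$, which we invoke to identify harmonic functions with constants.
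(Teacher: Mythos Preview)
Your proof is correct and follows essentially the same approach as the paper: both use the Lichnerowicz formula \eqref{eqn:lich} to obtain the factorisation $\scD_\omega^*\scD_\omega\phi = \Delta_\omega(\Delta_\omega - 1)\phi$ under the K\"ahler-Einstein hypothesis, and then conclude via the fact that harmonic functions on a compact manifold are constant. Your version is slightly more explicit in checking the mean-zero condition and in flagging the sign conventions, but the argument is the same.
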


\begin{proof} This is well known, and follows from a direct computation. A quick proof from the perspective we are taking in the present work is to use that holomorphy potentials are precisely the zeros of the Lichnerowicz operator, which from Equation \eqref{eqn:lich} on Fano K\"ahler-Einstein manifolds takes the form $$\D^*\D\phi = \Delta^2\phi - \Delta\phi.$$ Thus $ \D^*\D\phi = 0$ if and only if $\Delta((\Delta - 1)(\phi)) = 0$, giving the result since there are no non-constant harmonic functions on a compact manifold. 
\end{proof}

The proof also demonstrates that it is rather unlikely that holomorphy potentials give eigenfunctions of the Laplacian away from the Fano K\"ahler-Einstein situation. 

\begin{proposition} Suppose $(\pr(E),\scO(1)) \to (B,L)$ is a projective bundle. Then $\omega_X \in \scO(1)$ is an optimal symplectic connection if and only if $\omega_X$ is induced from a Hermite-Einstein metric on $E$.\end{proposition}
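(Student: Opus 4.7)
The plan is to reduce the optimal symplectic connection equation on $\pr(E)$ to the Hermite-Einstein equation in three stages: first match relatively cscK metrics with hermitian metrics on $E$; second apply the Fano simplification together with the K\"ahler-Einstein eigenvalue description to collapse the differential operator; third identify the symplectic curvature with the traceless part of the Chern curvature.

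First, since $\omega_X$ restricts to a cscK metric in $c_1(\scO(1)|_b)$ on each fibre $\pr(E_b) \cong \pr^m$, uniqueness of cscK metrics on projective space forces this restriction to be a Fubini-Study metric, and hence to arise from a hermitian inner product on $E_b$. After a suitable normalisation (for example fixing the induced metric on $\det E$), these patch to a global hermitian metric $h$ on $E$ whose induced curvature form $\omega_h \in c_1(\scO(1))$ has the same fibrewise restriction as $\omega_X$. Lemma \ref{restriction-of-forms} then gives $\omega_X = \omega_h + \pi^*\beta$ for some closed $(1,1)$-form $\beta$ on $B$, and since the optimal symplectic connection equation is invariant under addition of pullbacks from $B$, we may replace $\omega_X$ by $\omega_h$ throughout.

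Second, as $c_1(\pr^m) = (m+1)c_1(\scO(1))$ the fibres are Fano K\"ahler-Einstein, so the Fano simplification proved above reduces the equation to
$$p(\Delta_{\scV}(\Lambda_{\omega_B}\mu^*(F_{\scH})) + \Lambda_{\omega_B}\mu^*(F_{\scH})) = 0.$$
By construction the comoment map $\mu^*$ takes values in fibrewise mean-zero Hamiltonians, which on $\pr^m$ are precisely the fibrewise holomorphy potentials, so $\Lambda_{\omega_B}\mu^*(F_{\scH})$ already defines a smooth section of $E$. The lemma above identifies the space of fibrewise mean-zero holomorphy potentials with an eigenspace of $\Delta_{\scV}$ for a positive eigenvalue $\lambda_m$ depending only on $m$; consequently $p$ acts as the identity and $\Delta_{\scV}$ as $\lambda_m\,\Id$ on this subspace, and the equation collapses to $\Lambda_{\omega_B}\mu^*(F_{\scH}) = 0$. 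Third, one invokes the standard correspondence on projective bundles: under the comoment map, traceless hermitian endomorphisms of $(E_b,h_b)$ correspond to mean-zero Fubini-Study holomorphy potentials on $\pr(E_b)$, and the Ehresmann horizontal distribution determined by $\omega_h$ is the projectivisation of the Chern connection of $h$. It follows that $\mu^*(F_{\scH})$ corresponds to the traceless part $F_h - \frac{\tr F_h}{m+1}\Id_E$ of the Chern curvature, so $\Lambda_{\omega_B}\mu^*(F_{\scH}) = 0$ is equivalent to $\Lambda_{\omega_B}F_h = c\,\Id_E$ for a topologically determined constant $c$, which is exactly the Hermite-Einstein equation \eqref{intro-HE}.

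The main obstacle lies in the third stage: verifying explicitly that the horizontal distribution of $\omega_h$ coincides with the pushdown of the Chern connection of $h$, and that under a consistent moment map normalisation the resulting symplectic curvature matches the traceless part of $F_h$. Although this identification is classical, it requires a careful bookkeeping of the identification between mean-zero Hamiltonians and traceless endomorphisms, and a direct comparison of the two distinguished horizontal distributions on $\pr(E)$.
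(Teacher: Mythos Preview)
Your overall strategy matches the paper's, but Stage~2 contains a false assertion that breaks the logical flow. You write that $\mu^*$ ``takes values in fibrewise mean-zero Hamiltonians, which on $\pr^m$ are precisely the fibrewise holomorphy potentials.'' This is not true: the mean-zero smooth functions on $\pr^m$ form an infinite-dimensional space, while the holomorphy potentials are the finite-dimensional subspace corresponding to $\mathfrak{su}(m+1)$. For an arbitrary symplectic connection the curvature $F_{\scH}$ is only a two-form with values in Hamiltonian vector fields, so $\Lambda_{\omega_B}\mu^*(F_{\scH})$ has no reason a priori to lie in $C^{\infty}_E(X)$, and your subsequent application of the eigenvalue lemma and of $p=\Id$ is unjustified at that point.

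The reason $\mu^*(F_{\scH})$ does land in fibrewise holomorphy potentials is exactly the content of your Stage~3: once $\omega_X$ has been replaced by $\omega_h$, the horizontal distribution is induced by the Chern connection of $h$, whose curvature takes values in $\mathfrak{u}(E_b)$, and this Lie algebra acts holomorphically on $\pr(E_b)$. So Stage~3 must precede Stage~2, not follow it. This is precisely what the paper does: it first notes, ``from the construction via the hermitian metric,'' that $\mu^*(F_{\scH})$ is a two-form with values in fibrewise holomorphy potentials, and only then applies the eigenvalue lemma to collapse $\Delta_{\scV}$. With that reordering your argument becomes correct and essentially coincides with the paper's; the detour through the Fano simplification is harmless but unnecessary, since knowing $\mu^*(F_{\scH})$ is holomorphy-potential-valued already handles the $\rho_{\scH}$ term directly.
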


\begin{proof}On each, the choice $\omega_b =( \omega_X)|_{X_b}$ of Fubini-Study metric corresponds to a choice of basis of basis of $E_b$, so induces a hermitian metric $h$. If $\omega_h \in c_1(\scO(1))$ is the corresponding form, induced by considering $\scO(1)$ as a subbundle of $\pi^*E \to \pr(E)$ and taking the curvature of the restriction of the pullback metric, then $(\omega_h)|_{X_b} = ( \omega_X)|_{X_b}$ for all $b \in B$ and so by Lemma \ref{restriction-of-forms} the two forms must be equal. If $\mu^*(F_{\scH})$ denotes the curvature, then from the construction via the hermitian metric one sees that $ \mu^*(F_{\scH})$ is actually (the pullback via $\pi$ of) a two-form on $B$ with values in fibrewise \emph{holomorphy potentials} (rather than simply Hamiltonians). Thus $\Lambda_{\omega_B} \mu^*(F_{\scH})$ restricts to a holomorphy potential on each fibre $\pr(E)_b$, so $$\Delta_{\scV} \Lambda_{\omega_B} \mu^*(F_{\scH}) = \Lambda_{\omega_B} \mu^*(F_{\scH}).$$ The projection operator $p$ is the identity on holomorphy potentials (with fibrewise mean value zero as usual), hence $\omega_X$ is an optimal symplectic connection if and only if $$p(\Delta_{\scV}(\Lambda_{\omega_B} \mu^*(F_{\scH})) + \Lambda_{\omega_B} \mu^*(F_{\scH}))= \mu^*(\Lambda_{\omega_B}F_{\scH})=0.$$ Since $\mu^*$ is the fibrewise comoment map with mean value zero, this simply asks that $$\Lambda_{\omega_B}F_{\scH} = \lambda \Id,$$ where $\lambda$ is the appropriate topological constant. But this is simply the Hermite-Einstein equation for the hermitian metric $h$. 
\end{proof}

\begin{remark} \begin{enumerate}[(i)] \item  Of course, the proof also implies that for any line bundle $H$ on $\pr(E)$ such that $H|_{\pr(E)_b} = \scO(l)$ for all $b \in B$ and for some fixed integer $l$, the existence of an optimal symplectic connection is equivalent to the existence of a Hermite-Einstein metric. \item If $E_1\oplus\hdots\oplus E_l$ is a direct sum of stable vector bundles, then the Hermite-Einstein metrics on each factor induce a hermitian metric on the bundle itself. If the slopes of the bundles are distinct, this will not be a Hermite-Einstein metric, but rather will induce an extremal symplectic connection. In this way, Br\"onnle's work concerning extremal metrics on the projectivisation of such bundles is a special case of our results on extremal symplectic connections \cite{bronnle}.
\end{enumerate}
\end{remark}

The situation is similar when the fibres of $(X,H) \to (B,L)$ are integral coadjoint orbits. Br\"onnle speculated that there may be construction of extremal metrics in this situation  \cite[Section 2.3]{bronnle}, here we explain how this follows from our main result. We briefly recall the basic setup, referring to \cite[Section 2.2]{donaldson-notes}, \cite[Section 2.3]{bronnle} and \cite[Chapter 8]{besse} for further details.

Let $G$ be a connected compact Lie group with complexification $G^{\C}$. The group $G$ acts naturally on the dual of its Lie algebra $\mfg^*$ via the coadjoint action. If $\xi \in \mfg^*$, denoting by $K = \Stab(\xi)$ the stabiliser of $F$, $G/K$ can be considered as a submanifold of $\mfg^*$. Then $G/K$ is actually a compact Fano manifold, and the $G$-action on $G/K$ is transitive and holomorphic. The Fano manifold $G/K$ admits a natural symplectic form which is in fact K\"ahler-Einstein. The element $\xi \in \mfg^*$ induces a Lie algebra morphism $\mfg \to \R$, which restricts to a Lie algebra homomorphism $\mfk \to \R$, where we have denoted by $\mfk$ the Lie algebra of $K$. If $\xi: \mfk \to \R$ is induced from an action of $H$ on $S^1$, then $G/K$ is said to be an \emph{integral coadjoint orbit}.  In this case, one naturally obtains holomorphic line bundle over $G/K$ which admits a K\"ahler-Einstein metric and an action of $G$, hence a natural $G$-invariant hermitian metric.

Now suppose $P \to (B,L)$ is a principal $G$-bundle, and let $P^{\C}$ be the associated principal $G^{\C}$-bundle. Since the $G$-action on $G/K$ is holomorphic, one obtains a holomorphic fibre bundle $X \to (B,L)$ associated to $P \to (B,L)$. One similarly obtains a holomorphic line bundle $H$ on $X$ as the associated bundle to the $G$-action on the holomorphic line bundle on $G/K$. 

Let $\nabla$ be a connection on $P \to (B,L)$. Then $\nabla$ uniquely induces a connection on $P^{\C}$ which is unitary and compatible with the complex  tructure in the usual sense of principal bundles  \cite[p. 220]{biswas}. This, together with the hermitian metric inducing the K\"ahler-Einstein metric on $G/K$, induces a connection on $H$ through the $G$-action \cite[Section 2.3]{bronnle}\cite[Remark 2.3]{fine-panov}. A result of Fine-Panov implies that, if $\omega_X$ is the curvature of this metric, then $(\omega_X)_{\scH} = \mu^*(F^{\nabla}),$ while by construction $\omega_X$ restricts to a K\"ahler-Einstein metric on each fibre \cite[Remark 2.3]{fine-panov}. 

The crucial point is that, since the action of $G$ on $G/K$ is holomorphic, the symplectic curvature $F_{\scH}$ of this connection is automatically a two-form on $B$ with values in fibrewise \emph{holomorphic} vector fields \cite[Section 11.9]{michor}. Thus $\mu^*(F^{\nabla})$ is automatically a fibrewise holomorphy potential, and since $G/H$ is Fano, just as for projective bundles we have $$p(\Delta_{\scV}(\Lambda_{\omega_B} \mu^*(F^{\nabla})) + \Lambda_{\omega_B} \mu^*(F^{\nabla}))= \mu^*(\Lambda_{\omega_B}F^{\nabla}),$$ which is zero if and only if the connection $\nabla$ defines a Hermite-Einstein connection, again in the usual sense of principal bundles \cite[Definition 3.2]{biswas}. In summary, the same argument as in the case of projective bundles gives the following:

\begin{corollary} Suppose $(X,H) \to (B,L)$ has fibres which are integral coadjoint orbits, induced from a principal $G^{\C}$-bundle $P^{\C} \to B$ in the manner described above. If $P^{\C}$ admits a Hermite-Einstein connection, then $(X,H)$ admits an optimal symplectic connection.\end{corollary}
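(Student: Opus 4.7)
The plan is to take $\omega_X \in c_1(H)$ to be precisely the relatively K\"ahler metric built from the Hermite-Einstein connection $\nabla$ on $P^{\C}$ by the associated bundle construction described immediately before the corollary: the $G$-invariant hermitian metric on the line bundle over $G/K$ whose curvature is the K\"ahler-Einstein metric combines with $\nabla$ to induce a hermitian metric on $H$, and $\omega_X$ is its curvature. One then verifies the optimal symplectic connection equation directly, essentially copying the projective bundle proof with $E$ replaced by $P^{\C}$ and the fibre $\pr^m$ replaced by the coadjoint orbit $G/K$.

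The first input is the Fine--Panov computation already recalled in the excerpt, which says that $\omega_X$ restricts to the fixed K\"ahler-Einstein metric on each fibre, and that its horizontal component satisfies $(\omega_X)_{\scH} = \mu^*(F^{\nabla})$. In particular $\omega_X$ is relatively cscK, so the optimal symplectic connection equation makes sense. The second input is that the $G$-action on $G/K$ is \emph{holomorphic}. By the general principle quoted from Michor, this forces the symplectic curvature $F_{\scH}$ to take values in fibrewise holomorphic vector fields, not merely Hamiltonian ones. Consequently $\Lambda_{\omega_B}\mu^*(F^{\nabla})$ restricts on each fibre $X_b$ to an element of $\ker\scD_b^*\scD_b$ of mean value zero, that is, a genuine holomorphy potential.

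The third input is the eigenfunction Lemma: since each fibre is Fano K\"ahler-Einstein with $[\omega_b]$ proportional to $c_1(X_b)$, holomorphy potentials of mean zero are precisely the eigenfunctions of the fibrewise Laplacian of eigenvalue $1$. Applied to $\Lambda_{\omega_B}\mu^*(F^{\nabla})$ this gives the key identity
\[
\Delta_{\scV}\bigl(\Lambda_{\omega_B}\mu^*(F^{\nabla})\bigr) \;=\; \Lambda_{\omega_B}\mu^*(F^{\nabla}).
\]
Since the fibres are Fano, we also have $\rho_{\scH} = \mu^*(F_{\scH}) = \mu^*(F^{\nabla})$ after subtracting a pullback from $B$, as in the Fano-case simplification already proved in the excerpt. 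Combining these and using that the projection $p$ acts as the identity on fibrewise holomorphy potentials of mean zero, the optimal symplectic connection equation collapses to
\[
\mu^*\bigl(\Lambda_{\omega_B} F^{\nabla}\bigr) \;=\; 0,
\]
which is exactly the Hermite-Einstein equation for $\nabla$ on $P^{\C}$ in the principal bundle sense (the constant $\lambda\,\Id$ is absorbed by the mean-zero comoment map).

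The only delicate point is bookkeeping of constants: one has to match the normalisation implicit in the eigenvalue-one statement of the Lemma with the proportionality constant between $[\omega_b]$ and $c_1(X_b)$ arising from the specific $G$-equivariant construction of $H$. This is not a real obstacle because the construction of $\omega_X$ via the curvature of the $G$-equivariant hermitian metric on the line bundle over $G/K$ fixes precisely the K\"ahler-Einstein normalisation used in the Lemma. Once these constants are aligned, the argument is strictly parallel to the projective bundle case proved just above, and no additional PDE or analytical input is required.
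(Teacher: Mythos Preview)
Your proposal is correct and follows exactly the route the paper takes: the paper's proof consists of the single sentence ``the same argument as in the case of projective bundles gives the following,'' and the discussion preceding the corollary already records the three ingredients you invoke (Fine--Panov for $(\omega_X)_{\scH}=\mu^*(F^\nabla)$ and fibrewise K\"ahler--Einstein, holomorphicity of the $G$-action forcing the curvature to take values in fibrewise holomorphy potentials, and the Fano eigenvalue lemma collapsing the equation to $\mu^*(\Lambda_{\omega_B}F^\nabla)=0$). Your explicit acknowledgement of the normalisation bookkeeping is a point the paper leaves implicit.
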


In particular, from our main results one can construct extremal metrics on the total space of such fibrations. This answers a question of Br\"onnle  \cite[Section 2.3]{bronnle}.

\begin{remark}If one fixes a polarised manifold $(Y,L_Y)$ with automorphism group $G^{\C} = \Aut(Y,L_Y) $, then holomorphic principal $G^{\C}$-bundles are naturally in correspondence with fibrations with fibre $(Y,L_Y)$. However, even for general Fano fibrations, the optimal symplectic curvature equation does not reduce to a Hermite-Einstein type condition. The issue is that, while the projection operator $p$ allows one to induce a connection on such a principal bundle $P^{\C}$ from a symplectic connection $\omega_X$, it is not true that the projection of the symplectic curvature of $\omega_X$ is equal to the curvature of the induced connection on  $P^{\C}$. 

\end{remark}

\section{The approximate solutions}\label{sec:approx}

We will now construct approximate solutions to any desired order to the extremal equation on the total space of the fibration. 

\subsection{Expansion of the scalar curvature}

Let $\omega_X\in c_1(H)$ be a relatively K\"ahler metric, and let $\omega_B \in c_1(L)$ be K\"ahler. We denote $\omega_k =k\omega_B  + \omega_X $. The goal of this section is to calculate $S(\omega_k)$, as a function of $k$, to leading two orders in $k\gg 0$. This will involve an expansion of the Ricci curvature, and also firstly an expansion of the Laplcian operator.

\begin{remark}\label{norminuse} In the various expansions established in present section, the estimates are \textit{pointwise}. Thus for example $$S(\omega_k) = f_0 + f_1k^{-1} + O(k^{-2})$$ means that for each $x \in X$, there is a $c\geq 0$ such that $$|S(\omega_k)(x) - f_0(x) - k^{-1}f_1(x)| \leq ck^{-2}$$ for all $k \gg 0$. In Section \ref{sec:implicit}, we will improve our estimates to global estimates in the $C^l$-norm, using a patching argument of Fine \cite[Section 5]{fine1}. \end{remark}

\begin{lemma}The contraction operator $\Lambda_{\omega_k}$ satisfies $$\Lambda_{\omega_k}\beta = \Lambda_{\scV}\beta + k^{-1}\Lambda_{\omega_B}\beta + O(k^{-2}).$$
\end{lemma}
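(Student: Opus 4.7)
The plan is to expand the expression $\Lambda_{\omega_k}\beta = (m+n)\,\beta\wedge\omega_k^{m+n-1}/\omega_k^{m+n}$ directly, exploiting the horizontal/vertical splitting induced by $\omega_X$. Since $\omega_B$ is pulled back from $B$, it is purely horizontal, and by the very definition of $\scH$ as the $\omega_X$-orthogonal complement to $\scV$, the form $\omega_X$ has no mixed component and splits as $\omega_X = (\omega_X)_{\scV} + (\omega_X)_{\scH}$. Thus
\[
\omega_k = (\omega_X)_{\scV} + \bigl(k\omega_B + (\omega_X)_{\scH}\bigr),
\]
a sum of a purely vertical and a purely horizontal $(1,1)$-form. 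Decompose $\beta = \beta_{\scV} + \beta_{\scV\scH} + \beta_{\scH}$ in the same way.

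Next I would expand $\omega_k^{m+n}$ and $\beta\wedge\omega_k^{m+n-1}$ by the binomial formula, keeping only those terms of vertical degree exactly $2m$ and horizontal degree exactly $2n$, since all other contributions vanish by dimension. For $\omega_k^{m+n}$ only the term $\binom{m+n}{m}(\omega_X)_{\scV}^{m}\wedge (k\omega_B+(\omega_X)_{\scH})^{n}$ survives. For $\beta\wedge\omega_k^{m+n-1}$, the mixed part $\beta_{\scV\scH}$ contributes nothing (it would force odd vertical degree), and one is left with exactly two surviving terms: $\beta_{\scV}$ wedged with $\binom{m+n-1}{m-1}(\omega_X)_{\scV}^{m-1}\wedge(k\omega_B+(\omega_X)_{\scH})^{n}$, and $\beta_{\scH}$ wedged with $\binom{m+n-1}{m}(\omega_X)_{\scV}^{m}\wedge(k\omega_B+(\omega_X)_{\scH})^{n-1}$.

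The vertical contribution is particularly clean: the horizontal factor $(k\omega_B+(\omega_X)_{\scH})^{n}$ appears identically in numerator and denominator and cancels exactly, leaving
\[
(m+n)\,\frac{\binom{m+n-1}{m-1}}{\binom{m+n}{m}}\cdot\frac{\beta_{\scV}\wedge(\omega_X)_{\scV}^{m-1}}{(\omega_X)_{\scV}^{m}} = m\,\frac{\beta_{\scV}\wedge(\omega_X)_{\scV}^{m-1}}{(\omega_X)_{\scV}^{m}} = \Lambda_{\scV}\beta,
\]
with no $k$-dependent correction at all. For the horizontal contribution, one writes every top horizontal form as a multiple of $\omega_B^{n}$ (so $\beta_{\scH}\wedge\omega_B^{n-1}=\tfrac{1}{n}(\Lambda_{\omega_B}\beta)\,\omega_B^{n}$, etc.), factors $k^{n}$ out of the denominator and $k^{n-1}$ out of the numerator, and then uses the geometric-series expansion $(1+O(k^{-1}))^{-1}=1+O(k^{-1})$ to extract the leading order. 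The combinatorial factor $(m+n)\binom{m+n-1}{m}/\binom{m+n}{m} = n$ recovers exactly $k^{-1}\Lambda_{\omega_B}\beta$, with the next terms of order $k^{-2}$.

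This is a purely pointwise algebraic computation, consistent with the norm convention in Remark \ref{norminuse}; there is no real obstacle, the only thing to watch is the combinatorics of the binomial coefficients and the observation that the horizontal factor cancels identically in the vertical contribution, which is what makes the leading term $\Lambda_{\scV}\beta$ genuinely $k$-independent.
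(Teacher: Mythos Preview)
Your argument is correct and is essentially the same as the paper's own proof: both use the definition $\Lambda_{\omega_k}\beta = (m+n)\,\beta\wedge\omega_k^{m+n-1}/\omega_k^{m+n}$, observe that $\omega_k$ has no mixed component under the $\scV\oplus\scH$ splitting, decompose $\beta$ accordingly, and expand. Your write-up is slightly more explicit about the binomial coefficients and the exact cancellation of the horizontal factor in the vertical contribution, but the route is identical.
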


\begin{proof} The contraction $\Lambda_{\omega_k}$ is defined to be $$ \Lambda_{\omega_k}\beta = (m+n)\frac{\beta\wedge(k\omega_B + \omega_X)^{m+n-1}}{(k\omega_B+\omega_X)^{m+n}}.$$ Note that $\omega_k$ has no mixed term under the vertical horizontal decomposition of tensors on $X$, since certainly $\omega_B$ does not and $\omega_X$ is used to define the decomposition. Write $\beta = \beta_{\scV} + \beta_{mixed} + \beta_{\scH}$ for the decomposition of $\beta$ into vertical, mixed and horizontal terms. One then expands to obtain, using that $\omega_k$ has no mixed term \begin{align*}\Lambda_{\omega_k}\beta &= m\frac{\beta_{\scV}\wedge(\omega_X)_{\scV}^{m-1}\wedge((\omega_X)_{\scH}+k\omega_B)^n}{(\omega_X)^m_{\scV}\wedge((\omega_X)_{\scH}+k\omega_B)^n} + n\frac{\beta_{\scH}\wedge(\omega_X)_{\scV}^{m}\wedge((\omega_X)_{\scH}+k\omega_B)^{n-1}}{(\omega_X)^m_{\scV}\wedge((\omega_X)_{\scH}+k\omega_B)^n}, \\ &= \Lambda_{\scV}\beta + k^{-1}\Lambda_{\omega_B}\beta + O(k^{-2}),\end{align*} as required.\end{proof}

\begin{corollary} We have $$\Delta_k = \Delta_{\scV} + k^{-1}\Delta_{\scH} + O(k^{-2}),$$ where $\Delta_k$ is the Laplacian determined by $\omega_k$. \end{corollary}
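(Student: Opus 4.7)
The corollary follows essentially immediately from the preceding lemma once one recognises that, for a K\"ahler metric $\omega$, the complex Laplacian acts on functions as $\Delta_\omega f = \Lambda_\omega(\ddc f)$. The plan is therefore to take $\beta = \ddc f$ in the previous lemma and identify the resulting terms with the vertical and horizontal Laplacians as defined (or naturally suggested) in Section \ref{sec:optimal}.

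More precisely, I would first note that $\Delta_k f = \Lambda_{\omega_k}(\ddc f)$ by definition of the Laplacian determined by the K\"ahler metric $\omega_k$. Applying the previous lemma to the $(1,1)$-form $\beta = \ddc f$ gives the pointwise identity
\[
\Delta_k f \;=\; \Lambda_{\scV}(\ddc f) \;+\; k^{-1}\Lambda_{\omega_B}(\ddc f) \;+\; O(k^{-2}).
\]
The leading term is exactly $\Delta_{\scV} f$ by the definition of the vertical Laplacian given in Section \ref{sec:optimal}, and one defines the horizontal Laplacian by $\Delta_{\scH} f := \Lambda_{\omega_B}(\ddc f)$, which is natural since $\Lambda_{\omega_B}$ is the horizontal contraction operator. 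Substituting these identifications yields the asserted expansion.

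There is essentially no obstacle: the whole content sits in the previous lemma, and the remainder term in the pointwise sense of Remark \ref{norminuse} is inherited directly from the contraction expansion. The only very minor point worth flagging is that $\ddc f$ has in general non-trivial mixed, vertical and horizontal components under the $\scV$-$\scH$ decomposition, but this is irrelevant because the previous lemma is stated and proved for arbitrary two-forms $\beta$ (the mixed components simply contribute trivially to both sides thanks to the fact that $\omega_k$ has no mixed component). Thus no further calculation is required.
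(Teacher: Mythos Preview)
Your proposal is correct and matches the paper's approach exactly: the paper states this as a corollary without proof, treating it as immediate from the contraction lemma applied to $\beta = \ddc f$, which is precisely what you do. Your remark that the mixed component of $\ddc f$ plays no role (since $\omega_k$ has no mixed component) is a helpful clarification that the paper leaves implicit.
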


Thus in general the leading order term is the fibrewise Laplacian of $f$, and if $f$ is pulled back from $B$, the leading order term is the pullback of the Laplacian from $B$.

The short exact sequence $$ 0 \to \scV \to TX \to \pi^*TB \to 0$$ induces an isomorphism $$\wedge^nTX \cong \wedge^m\scV \otimes \pi^*\wedge^b TB.$$ The K\"ahler metric $\omega_k$ induces a hermitian metric on $TX$, and hence on $\wedge^nTX$, whose curvature is $\Ric(\omega_k)$. From the previous short exact sequence, $\Ric(\omega_k)$ can be written as a sum of the curvatures of the metrics on $ \wedge^m\scV$ and $\wedge^b\scH \cong  \pi^*\wedge^b TB$ induced from $\omega_k$.

\begin{proposition}\label{WP-rho}
Suppose $\omega_X$ restricts to a cscK metric on each fibre. Let $q: B \to \M$ be the moduli map to the moduli space of cscK manifolds. Then $$\int_{X/B} \rho_{\scH}\wedge\omega_X^m = -q^*\Omega_{WP},$$ where $\Omega_{WP} \in c_1(\scL_{CM})$ is the Weil-Petersson metric.
\end{proposition}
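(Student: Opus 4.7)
The plan is to compare the integrand $\rho_{\scH}\wedge\omega_X^m$ with the full integrand $\rho\wedge\omega_X^m$ appearing in the definition of the Weil-Petersson metric, and show that the difference is exactly $\frac{\bar S}{m+1}\int_{X/B}\omega_X^{m+1}$, where $\bar S = S(\omega_b)$ is the (topological, hence $b$-independent) fibrewise scalar curvature constant. Combined with the definition $\alpha = \frac{\bar S}{m+1}\int_{X/B}\omega_X^{m+1} - \int_{X/B}\rho\wedge\omega_X^m$ and the identity $\alpha = q^*\Omega_{WP}$ already recorded in the excerpt, this yields the claim.

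First I would decompose $\rho$ with respect to the splitting $TX \cong \scV\oplus\scH$ induced by $\omega_X$, writing $\rho = \rho_{\scV} + \rho_{mixed} + \rho_{\scH}$, where $\rho_{mixed}$ has one vertical and one horizontal cotangent index. Similarly write $\omega_X = (\omega_X)_{\scV} + (\omega_X)_{\scH}$, with no mixed term by the very definition of $\scH$ as the $\omega_X$-orthogonal complement of $\scV$. Expanding $\omega_X^m$ by the binomial theorem, a direct bidegree count shows that for the fibre integral to be nonzero the integrand must have vertical bidegree exactly $(m,m)$: the $\rho_{mixed}$ terms contribute asymmetrically in vertical holomorphic and antiholomorphic degrees and so cannot combine with any power of the block-diagonal $\omega_X$ to reach $(m,m)$, hence drop out. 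Thus
\begin{equation*}
\int_{X/B}\rho\wedge\omega_X^m = \int_{X/B}\rho_{\scV}\wedge m(\omega_X)_{\scV}^{m-1}\wedge(\omega_X)_{\scH} + \int_{X/B}\rho_{\scH}\wedge(\omega_X)_{\scV}^m,
\end{equation*}
and the second summand equals $\int_{X/B}\rho_{\scH}\wedge\omega_X^m$ by the same bidegree argument.

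Next I would use the hypothesis that $\omega_X$ is fibrewise cscK. On each fibre, $\rho|_{X_b} = \Ric\omega_b$ and hence $m\,\Ric\omega_b\wedge\omega_b^{m-1} = \bar S\,\omega_b^m$. Since both $\rho_{\scV}\wedge m(\omega_X)_{\scV}^{m-1}$ and $\bar S\,(\omega_X)_{\scV}^m$ are purely-vertical forms of full vertical bidegree $(m,m)$, and a purely-vertical top form on $X$ is determined by its restriction to each fibre, we obtain the global identity
\begin{equation*}
\rho_{\scV}\wedge m(\omega_X)_{\scV}^{m-1} = \bar S\,(\omega_X)_{\scV}^m
\end{equation*}
on $X$. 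Another binomial expansion shows that only the $(\omega_X)_{\scV}^m\wedge(\omega_X)_{\scH}$ term of $\omega_X^{m+1}$ has vertical bidegree $(m,m)$ and thus $\int_{X/B}\omega_X^{m+1} = (m+1)\int_{X/B}(\omega_X)_{\scV}^m\wedge(\omega_X)_{\scH}$. Substituting,
\begin{equation*}
\int_{X/B}\rho_{\scV}\wedge m(\omega_X)_{\scV}^{m-1}\wedge(\omega_X)_{\scH} = \frac{\bar S}{m+1}\int_{X/B}\omega_X^{m+1}.
\end{equation*}

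Combining the two displayed identities gives $\int_{X/B}\rho\wedge\omega_X^m = \frac{\bar S}{m+1}\int_{X/B}\omega_X^{m+1} + \int_{X/B}\rho_{\scH}\wedge\omega_X^m$, so $\int_{X/B}\rho_{\scH}\wedge\omega_X^m = -\alpha = -q^*\Omega_{WP}$, as required. The only step that requires any care is the bidegree bookkeeping that eliminates the mixed term and isolates the $j=0,1$ pieces of $\omega_X^m$ and $\omega_X^{m+1}$; every other ingredient is either the cscK condition on fibres or the definition of $\alpha$ supplied earlier.
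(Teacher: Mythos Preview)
Your proof is correct and follows the same route as the paper. The paper simply cites Fine \cite[Lemma 2.3]{fine2} for the identity
\[
\int_{X/B}\rho_{\scH}\wedge\omega_X^m = -\int_{X/B}\rho\wedge\omega_X^m + \frac{S(\omega_b)}{m+1}\int_{X/B}\omega_X^{m+1}
\]
and then recognises the right-hand side as $-\alpha = -q^*\Omega_{WP}$ from the definition of the Weil--Petersson metric; you have instead supplied a direct proof of Fine's identity via the vertical--horizontal bidegree argument, which is exactly how Fine establishes it.
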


\begin{proof} Since $\omega_b$ is cscK for all $b \in B$, a result of Fine provides $$\int_{X/B} \rho_{H} \wedge \omega_X^m = - \int_{X/B}\rho \wedge \omega_X^m + \frac{S(\omega_b)}{m+1}\int_{X/B} \omega^{m+1},$$ where $S(\omega_b)$ is the scalar curvature of any fibre \cite[Lemma 2.3]{fine2}. This is precisely the formula for the Weil-Petersson metric on $\M$, even in the case that of manifolds admitting continuous automorphisms \cite[Theorem 4.4]{DN}.
\end{proof}

Proposition \ref{WP-rho} was essentially proved by Fine \cite[Lemma 2.3]{fine2} when the fibres have discrete automorphism group, and explicitly observed in \cite[Lemma 3.5]{fibrations} in this special case.

\begin{corollary} Suppose all fibres of $X \to B$ are isomorphic. Then $$\int_{X/B} \rho_{\scH}\wedge\omega_X^m =0.$$\end{corollary}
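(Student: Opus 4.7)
The plan is to deduce this immediately from Proposition \ref{WP-rho}. Under the hypothesis that all fibres of $\pi: X \to B$ are isomorphic, each fibre $(X_b,H_b)$ represents the same isomorphism class of polarised cscK manifold, so the moduli map $q: B \to \M$ is constant. In particular $q$ factors through a point, and hence $q^*\Omega_{WP} = 0$ as a smooth $(1,1)$-form on $B$.

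Applying Proposition \ref{WP-rho}, we obtain
\[
\int_{X/B} \rho_{\scH}\wedge\omega_X^m = -q^*\Omega_{WP} = 0,
\]
which is the claim. There is no genuine obstacle here; the content lies entirely in Proposition \ref{WP-rho}, and the corollary simply records the vanishing of the Weil–Petersson contribution in the isotrivial case. One might further remark that, when the fibres are all isomorphic, the horizontal Ricci component $\rho_{\scH}$ need not itself vanish, but its fibre integral against the top fibrewise power of $\omega_X$ does — reflecting the fact that the relatively K\"ahler metric $\omega_X$ can still have non-trivial symplectic curvature even when the underlying complex fibration is (locally) a product.
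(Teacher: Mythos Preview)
Your proof is correct and matches the paper's primary argument: the moduli map is constant, so $q^*\Omega_{WP}=0$, and Proposition \ref{WP-rho} gives the result. The paper additionally sketches an alternative direct argument (via Fischer--Grauert local triviality and independence of the fibre integral from the choice of fibrewise cscK metric) that avoids the deeper moduli theory of \cite{DN}, but this is supplementary rather than a replacement for the approach you give.
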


\begin{proof} This follows immediately from the above, since the moduli map is to a point. In fact the statement does not need to apply any of the deeper moduli theory of \cite{DN} to obtain this statement. Instead, one can argue as follows. 

Firstly, note that the integral is independent of choice of fibrewise cscK metric in the class $c_1(H)$ \cite[p. 20]{DN}. Since all fibres are isomorphic, by the Fischer-Grauert Theorem the fibration $X \to B$ is actually locally trivial \cite{fischer-grauert}. Thus if the fibre is $F$, in a local patch $X \cong F \times U$ with $U \subset B$, by independence of the integral of fibrewise cscK metric one can take the relatively K\"ahler metric $p_1^*\omega_F$. Here $p_1$ denotes the projection $F \times U\to U$ and $\omega_F$ is any cscK metric on the fibre $F$. Using such a metric, it is clear by direct calculation that the fibre integral vanishes. \end{proof}

\begin{lemma}\label{ricci-expansion} The Ricci curvature satisfies $$\Ric(\omega_k) = \Ric(\omega_b) + \rho_{H} +  \Ric(\omega_B) +  k^{-1}\ddc  (\Lambda_{\omega_B}\omega_X) + O (k^{-2}).$$ Here $\Ric(\omega_b) = \rho_{\scV}$ restricts to the Ricci curvature of $\omega_b$ on each fibre $X_b$, and we have suppressed pullbacks via $\pi$ as usual.
\end{lemma}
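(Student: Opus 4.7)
The plan is to derive the expansion directly from the defining identity $2\pi\Ric(\omega_k) = -\ddc\log\omega_k^{m+n}$, by computing the volume form via the horizontal-vertical decomposition induced by $\omega_X$. Since $\omega_B$ is pulled back from $B$ and $\omega_X$ has no mixed component (by the very definition of $\scH$ as the $\omega_X$-orthogonal complement of $\scV$), the same holds for $\omega_k$. Writing $\omega_k = (\omega_X)_{\scV} + (k\omega_B + (\omega_X)_{\scH})$ and noting that the vertical and horizontal parts have pointwise rank at most $m$ and $n$ respectively, the binomial expansion collapses to the single term
$$\omega_k^{m+n} = \binom{m+n}{m}(\omega_X)_{\scV}^m \wedge (k\omega_B + (\omega_X)_{\scH})^n.$$

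Taking $-\ddc\log$ of this splits into two independent contributions. The vertical factor $(\omega_X)_{\scV}^m$ is, in any local trivialisation, the hermitian metric induced by $\omega_X$ on the line bundle $\wedge^m\scV$; its curvature is $\rho$ by the definition given in the previous subsection, and splitting $\rho = \rho_{\scV} + \rho_{\scH}$ with $\rho_{\scV} = \Ric(\omega_b)$ supplies the first two terms of the stated expansion. For the horizontal factor, I would choose a local unitary frame for $(\scH,\omega_B)$ in which $(\omega_X)_{\scH}$ is represented by a hermitian matrix $A$; the elementary identity $\det(kI+A) = k^n(1 + k^{-1}\mathrm{tr}(A) + O(k^{-2}))$ together with $\mathrm{tr}(A) = \Lambda_{\omega_B}(\omega_X)_{\scH} = \Lambda_{\omega_B}\omega_X$ gives
$$(k\omega_B + (\omega_X)_{\scH})^n = k^n\omega_B^n\left(1 + k^{-1}\Lambda_{\omega_B}\omega_X + O(k^{-2})\right).$$
Applying $-\ddc\log$ and recalling that $-\ddc\log\omega_B^n = \Ric(\omega_B)$ produces the remaining two terms, modulo fixing the sign convention in the $\ddc$-definition of the Ricci form.

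I do not anticipate any substantive obstacle: the argument is a pointwise computation in the sense of Remark \ref{norminuse}, running parallel to the preceding expansion of $\Lambda_{\omega_k}$. The only delicate point is bookkeeping of the various decompositions — in particular, that $\Lambda_{\omega_B}$ annihilates the vertical component of $\omega_X$, that the rank restrictions on $\scV$ and $\scH$ force the collapse of the binomial expansion of $\omega_k^{m+n}$, and that any mixed component of $\rho$ is to be understood as implicit in the $\rho_{\scH}$ contribution to the formula as stated. Higher-order terms in $k^{-1}$ can be obtained at will by pushing the Taylor expansions of $\det(I + k^{-1}A)$ and $\log(1+x)$ further, but are not required at the stated precision.
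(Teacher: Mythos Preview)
Your proposal is correct and follows essentially the same route as the paper: both arguments split the volume form (equivalently, the top exterior power of $TX$) along $\wedge^{m+n}TX \cong \wedge^m\scV \otimes \pi^*\wedge^n TB$, identify the curvature of the vertical factor with $\rho$, and expand the horizontal factor $(k\omega_B + (\omega_X)_{\scH})^n$ in $k^{-1}$ before applying $-\ddc\log$ and the Taylor series of $\log(1+x)$. Your use of a local unitary frame and the determinant identity $\det(kI+A) = k^n(1 + k^{-1}\mathrm{tr}\,A + O(k^{-2}))$ is just a coordinate rephrasing of the paper's ratio $(k\omega_B + (\omega_X)_{\scH})^n / \omega_B^n = k^n + k^{n-1}\Lambda_{\omega_B}\omega_X + O(k^{n-2})$.
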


\begin{proof} 
Recall that $\Lambda^n TX  = \Lambda^m V \otimes \pi^* \Lambda^b TB$. Thus the Ricci curvature of $\omega_k$ is the sum of the corresponding curvatures of its vertical and horizontal component. The former is the form $\rho$ defined above. For the latter, we first note that if $iF_{\wedge^b\scH}$ denotes the curvature of the induced metric on $iF^{\wedge^b\scH}$ that

\begin{align*} iF^{\wedge^b\scH} - \Ric(\omega_B) &= \ddc\log\left(\frac{(k\omega_B + (\omega_X)_{\scH})^n}{\omega_B^n}\right), \\ &= \ddc\log(k^n + k^{n-1} \Lambda_{\omega_B} \omega_X+ O(k^{n-2})) .\end{align*} We thus obtain using the power series expansion $\log(1+x) = \sum_{i=1}^{\infty}(-1)^{i+1}(x^i/i)$, valid for $|x|<1$ and hence in our situation for $k \gg 0$, that $$\Ric (\omega_k) = \rho + \Ric(\omega_B) + k^{-1}(\ddc (\Lambda_{\omega_B}\omega_X)) + O(k^{-2}),$$ as desired. \end{proof}

Combining the expansions we have obtained gives the following.

\begin{corollary}\label{cor:scalarexpansion}
$$S(\omega_k) = S(\omega_b) + k^{-1}\big( S(\omega_B) + \Lambda_{\omega_B} \rho_{\scH} + \Delta_{\scV} (\Lambda_{\omega_B}(\omega_X)_{\scH})\big)  + O(k^{-2}).$$ Here $S(\omega_b)$ is the function whose restriction to $X_b$ is the scalar curvature of $\omega_B$, i.e. $S(\omega_b) = \Lambda_{\scV}\Ric(\omega_b).$ 

\end{corollary}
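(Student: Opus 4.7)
The plan is to combine the two expansions already in hand, namely the expansion of the contraction operator $\Lambda_{\omega_k} = \Lambda_{\scV} + k^{-1}\Lambda_{\omega_B} + O(k^{-2})$ and the Ricci expansion of Lemma \ref{ricci-expansion}, via the identity $S(\omega_k) = \Lambda_{\omega_k}\Ric(\omega_k)$. Multiplying the two expansions and collecting terms in powers of $k^{-1}$ gives
\[
S(\omega_k) = \Lambda_{\scV}\bigl(\Ric(\omega_b) + \rho_{\scH} + \Ric(\omega_B)\bigr) + k^{-1}\bigl(\Lambda_{\omega_B}(\Ric(\omega_b) + \rho_{\scH} + \Ric(\omega_B)) + \Lambda_{\scV}\ddc(\Lambda_{\omega_B}\omega_X)\bigr) + O(k^{-2}).
\]

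The next step is to simplify each term using the elementary observation, built into the definitions of $\Lambda_{\scV}$ and $\Lambda_{\omega_B}$, that the vertical contraction annihilates purely horizontal $(1,1)$-forms and the horizontal contraction annihilates purely vertical $(1,1)$-forms. Concretely, $\rho_{\scH}$ is horizontal by construction and $\Ric(\omega_B)$ is pulled back from $B$, so both die under $\Lambda_{\scV}$; this leaves $\Lambda_{\scV}\Ric(\omega_b) = S(\omega_b)$ as the sole leading-order term. Similarly $\Ric(\omega_b) = \rho_{\scV}$ is purely vertical, so $\Lambda_{\omega_B}\Ric(\omega_b) = 0$, while $\Lambda_{\omega_B}\Ric(\omega_B) = S(\omega_B)$ reduces to the usual contraction on the base, and $\Lambda_{\scV}\ddc(\Lambda_{\omega_B}\omega_X) = \Delta_{\scV}(\Lambda_{\omega_B}\omega_X)$ by the definition of the vertical Laplacian (noting that $\Lambda_{\omega_B}\omega_X = \Lambda_{\omega_B}(\omega_X)_{\scH}$ since the horizontal contraction only sees the horizontal component). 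Assembling these gives exactly the formula claimed in the corollary.

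There is no real obstacle here: the argument is essentially a bookkeeping exercise, and the content of the corollary is entirely absorbed into the preceding two expansions. The only point requiring a small amount of care is to observe that the pointwise nature of the two inputs (in the sense of Remark \ref{norminuse}) is preserved under multiplication, so the resulting $O(k^{-2})$ remainder is also pointwise; this is immediate since $\Lambda_{\omega_k}$ is a smooth tensorial operation and the two expansions hold with smooth remainders depending continuously on $x \in X$.
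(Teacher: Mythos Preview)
Your proof is correct and follows essentially the same approach as the paper: both compute $S(\omega_k) = \Lambda_{\omega_k}\Ric(\omega_k)$ by multiplying the expansion $\Lambda_{\omega_k} = \Lambda_{\scV} + k^{-1}\Lambda_{\omega_B} + O(k^{-2})$ against the Ricci expansion of Lemma \ref{ricci-expansion}, then simplify using that $\Lambda_{\scV}$ kills horizontal forms and $\Lambda_{\omega_B}$ kills vertical ones. The paper organises the bookkeeping by computing $\Lambda_{\scV}\Ric(\omega_k)$ and $\Lambda_{\omega_B}\Ric(\omega_k)$ separately and summing, whereas you expand the product directly, but the content is identical.
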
 

\begin{proof} This follows immediately from the above, using that $$S(\omega_k) = \Lambda_{\omega_k}\Ric(\omega_k)$$ and the expansions for $\Ric(\omega_k)$ and $\Lambda_{\omega_k}$.  Indeed firstly $$\Lambda_{\omega_B}(\Ric(\omega_k)) =k^{-1}(S(\omega_B) + \Lambda_{\omega_B}\rho_{\scH}) + O(k^{-2}).$$ Next, since the vertical Laplacian of a form $\beta$ is computed by restricting the form to a fibre, and taking the Laplacian on that fibre, it follows that for a function $f$ on $X$, we have $$(\Lambda_{\scV}\ddc f)|_{X_b} = \Lambda_{\omega_b}\ddc (f|_{X_b}),$$ where the partial derivatives of $f$ on the right hand side of the equation are taken on a fibre $X_b$. Thus by definition of the vertical Laplacian$$\Lambda_{\scV}\Ric(\omega_k) = S(\omega_b) + k^{-1}(\Delta_{\scV}(\Lambda_{\omega_B}\omega_X)) + O(k^{-2}),$$ and summing gives the result. \end{proof}

\subsection{The $k^{-1}$ term}\label{subsec:kinverse} We now take $\omega_X$ to be a relatively K\"ahler metric which restricts to a cscK metric on each fibre. Thus the function $S(\omega_b)$ appearing in the expansion of $S(\omega_k)$ is a constant. Our next goal is to show that, with an appropriately chosen choice of $\omega_X, \omega_B$ and $\phi_{R,1} \in C_R^{\infty}(X)$, the $k^{-1}$ coefficient of the scalar curvature $S(\omega_X + k \omega_B + k^{-1}\ddc \phi_{R,1})$ is also constant.

We begin with the correct choice of $\omega_B$. Let $\alpha$ be the Weil-Petersson metric on $B$, so that by Proposition \ref{WP-rho} $$\alpha = -\int_{X/B}\rho_{\scH}\wedge\omega_X^m.$$ We take $\omega_B$ to be a solution of the twisted cscK equation $$S(\omega_B) - \Lambda_{\omega_B} \alpha = const.$$ The fibre metric $\omega_X$ is chosen to be an optimal symplectic connection, so that by definition $$p(\Delta_{\scV}(\Lambda_{\omega_B} \mu^*(F_{\scH})) + \Lambda_{\omega_B} \rho_{\scH})=0.$$

We now explain how this dictates our choice of $\phi_{R,1}$, and in particular how optimal symplectic connections allow us to solve the approximate cscK equation to order $k^{-1}$.

\begin{proposition}\label{prop:firstapprox} Suppose $\omega_B$ is a twisted cscK metric and $\omega_X$ is an optimal symplectic connection. Then $$S(\omega_X + k \omega_B) = \hat S_{0} + k^{-1}(\hat S_1 +  \psi_{R,1}) + O(k^{-2}),$$ with $\psi_{R,1} \in C^{\infty}_R(X)$ and $\hat S_0, \hat S_1 \in \R$.\end{proposition}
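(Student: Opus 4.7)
The plan is to start from the expansion of $S(\omega_k)$ already proved in Corollary~\ref{cor:scalarexpansion} and decompose the $k^{-1}$ coefficient
$$\Phi := S(\omega_B) + \Lambda_{\omega_B}\rho_{\scH} + \Delta_{\scV}\bigl(\Lambda_{\omega_B}(\omega_X)_{\scH}\bigr)$$
according to the splitting $C^{\infty}(X,\R) \cong C^{\infty}(B,\R) \oplus C^{\infty}_E(X,\R) \oplus C^{\infty}_R(X,\R)$ from \eqref{eq:function-decomposition}. The constant $\hat S_0$ is already provided by the zeroth order term $S(\omega_b)$, which is a constant because $\omega_X$ is fibrewise cscK. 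It then suffices to show that the $C^{\infty}(B,\R)$ component of $\Phi$ is a constant (which gives $\hat S_1$) and that the $C^{\infty}_E(X,\R)$ component of $\Phi$ vanishes; whatever remains lies in $C^{\infty}_R(X,\R)$ and will be $\psi_{R,1}$.

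The first simplification I would make is to invoke minimal coupling (Lemma~\ref{minimal-coupling}), which writes $(\omega_X)_{\scH} = \mu^*(F_{\scH}) + \pi^*\beta$ for some $(1,1)$-form $\beta$ on $B$. Since $\Delta_{\scV}$ annihilates pullbacks from $B$, this gives
$$\Delta_{\scV}\bigl(\Lambda_{\omega_B}(\omega_X)_{\scH}\bigr) = \Delta_{\scV}\bigl(\Lambda_{\omega_B}\mu^*(F_{\scH})\bigr),$$
so $\Phi = S(\omega_B) + \bigl(\Delta_{\scV}(\Lambda_{\omega_B}\mu^*(F_{\scH})) + \Lambda_{\omega_B}\rho_{\scH}\bigr)$, exposing the left hand side of the optimal symplectic connection equation directly. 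Now for the $C^{\infty}_E$ projection: $S(\omega_B)$ is pulled back from $B$, so $p(S(\omega_B))=0$, while $p$ of the bracketed term vanishes precisely because $\omega_X$ is an optimal symplectic connection. This disposes of the $C^{\infty}_E$ contribution.

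For the $C^{\infty}(B,\R)$ projection I would fibre-integrate against $\omega_X^m/V$. The Laplacian term $\Delta_{\scV}(\Lambda_{\omega_B}\mu^*(F_{\scH}))$ has vanishing fibre average. For the Ricci term, writing $(\Lambda_{\omega_B}\rho_{\scH})\,\omega_B^n = n\,\rho_{\scH}\wedge\omega_B^{n-1}$, wedging with $\omega_X^m$, and fibre integrating while pulling $\omega_B^{n-1}$ outside, Proposition~\ref{WP-rho} identifies the result with a multiple of $\Lambda_{\omega_B}\alpha$, where $\alpha = q^*\Omega_{WP}$. Thus the $C^{\infty}(B,\R)$ component of $\Phi$ is (up to the correct sign and normalization) $S(\omega_B) - \Lambda_{\omega_B}\alpha$, which is constant exactly by the twisted cscK equation for $\omega_B$; this provides $\hat S_1$. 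The remainder then automatically lies in $C^{\infty}_R(X,\R)$ and is the desired $\psi_{R,1}$. The only real obstacle here is the bookkeeping of sign conventions and volume normalizations in tying the fibre average of $\Lambda_{\omega_B}\rho_{\scH}$ to $\Lambda_{\omega_B}\alpha$; no new analytic input is required beyond the previously established expansion and the two defining PDEs.
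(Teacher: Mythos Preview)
Your proof is correct and follows essentially the same route as the paper: decompose the $k^{-1}$ coefficient via the three-way splitting \eqref{eq:function-decomposition}, use Proposition~\ref{WP-rho} to identify the base component with $S(\omega_B)-\Lambda_{\omega_B}\alpha$, and invoke the optimal symplectic connection hypothesis to kill the $E$-component. Your explicit use of minimal coupling (Lemma~\ref{minimal-coupling}) to replace $(\omega_X)_{\scH}$ by $\mu^*(F_{\scH})$ inside $\Delta_{\scV}$ is actually a bit cleaner than the paper's treatment, which leaves this identification implicit when matching the $E$-component to the left hand side of the optimal symplectic connection equation.
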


\begin{proof} By hypothesis, the function $S(\omega_b)$ is constant and independent of the fibre, hence the statement of the Proposition only concerns the $k^{-1}$ coefficient, which is given by $$S_{1} =  S(\omega_B) + \Lambda_{\omega_B} \rho_{\scH} + \Delta_{\scV}(\Lambda_{\omega_B}(\omega_X)_{\scH}).$$

Note that $$\int_{X/B}(\Lambda_{\omega_B} \rho_{\scH} )\omega_X^m =  \Lambda_{\omega_B}\int_{X/B}\rho_{\scH}\omega_X^m = -\Lambda_{\omega_B} \alpha,$$ with $\alpha$ the Weil-Petersson metric. Thus under the decomposition $$C^{\infty}(X,\R) \cong C^{\infty}(B,\R) \oplus C^{\infty}_E(X,\R) \oplus C_R^{\infty}(X,\R)$$ of equation \eqref{eq:function-decomposition}, the $C^{\infty}(B,\R)$-component of the function $\Lambda_{\omega_B} \rho_{\scH}$ is $\Lambda_{\omega_B} \alpha$, while the $C^{\infty}_E(X,\R)$-component is $p(\Lambda_{\omega_B} \rho_{\scH})$. 

We next consider the term $ \Delta_{\scV}(\Lambda_{\omega_B}(\omega_X)_{\scH})$. Note that $$\int_{X/B} \Delta_{\scV}(\Lambda_{\omega_B}(\omega_X)_{\scH})\wedge\omega_X^m = 0,$$ since $\Delta_{\scV}(\Lambda_{\omega_B}(\omega_X)_{\scH})|_{X_b} = \Delta_{\omega_b}\big( (\Lambda_{\omega_B}\omega_X)|_{X_b} \big)$ integrates to zero over each fibre.

Thus since $\omega_X$ is an optimal symplectic connection and $\omega_B$ is a twisted cscK metric, under the splitting of the space of functions on $X$, both the $C^{\infty}(B,\R)$ and $C^{\infty}_E(X,\R) $ terms are constant, which  is to say $S_1 \in C^{\infty}_R(X,\R)$ up to the addition of a constant. If one adds a term $k^{-1}\ddc\phi_{R,1}$ to $\omega_{k}$, it is clear that this changes the scalar curvature $S(\omega_k)$ by $$S(\omega_X + k \omega_B + k^{-1}\ddc\phi_{R,1}) = S(\omega_X) + k^{-1}(S_1 + \D^*_{\scV}\D_{\scV}\phi_{R,1}) + O(k^{-2}),$$ since $\D^*_{\scV}\D_{\scV}$ is the fibrewise linearisation of the scalar curvature. It follows  from fibrewise ellipticity that the operator $\D^*_{\scV}\D_{\scV}$ is an isomorphism on $C^{\infty}_R(X,\R)$, so there is a choice of $\phi_{R,1}$ such that $\D^*_{\scV}\D_{\scV}\phi_{R,1} = -S_1,$ again up to the addition of a constant. Thus with this choice of $\phi_{R,1}$, the $k^{-1}$-coefficient of $S(\omega_X + k \omega_B + k^{-1} \ddc\phi_{R,1})$ is constant, as required.\end{proof}

\subsection{The linearisation of the optimal symplectic curvature: special case}\label{subsec:linearisation-special} The goal of this section is to understand how the scalar curvature changes upon adding a function $\phi_E \in C^{\infty}_E(X)$, to leading order. Viewing $E$ as a smooth vector bundle over $B$ as in Section \ref{sec:optimal}, we shall see that this essentially asks what the linearisation of the optimal symplectic curvature equation is at a solution. 

We begin by considering the case that $k\omega_B + \omega_X$ actually has constant scalar curvature to $O(k^{-2})$, or equivalently that the $\phi_{R,1}$ term constructed in Section \ref{subsec:kinverse} vanishes. Later in Section \ref{subsec:general} we will return to the general case, using results obtained in the special case. 

Let $\scL_{\omega_k} = \scD_k^* \scD_k$ be the Lichnerowicz operator of $\omega_k = k\omega_B + \omega_X$. This has an expansion $$\L_{\omega_k} = \L_0 + \L_1 k^{-1} + \cdots,$$ where $\L_0 =\D^*_{\scV}\D_{\scV}$ is the glued fibrewise Lichnerowicz operator. We note that $- \L_0 - k^{-1} \L_1$ is the linearisation of the scalar curvature up to order $k^{-1}$, provided our metric is of constant scalar curvature to order $k^{-1}$. Indeed, in general, the linearisation of the scalar curvature is $$ f \mapsto - \scL_{\omega} (f) + \frac{1}{2} \langle \nabla S(\omega), \nabla f \rangle.$$ If $S(\omega_k)$ is constant to order $k^{-1}$, this means the term $ \langle \nabla S(\omega_k), \nabla f \rangle_{\omega_k}$ is $O(k^{-2})$ since $\nabla S(\omega_k)$ is, and so will not appear in the linearisation to order $k^{-1}$.

We now wish to study the mapping properties of $\L_1$ on $C^{\infty}_E (X)$, i.e. on fibrewise average zero functions that are fibrewise holomorphy potentials. We will consider this as a map $E \to E$ by projecting to $C^{\infty}_E (X)$, that is by considering the operator $p \circ \L_1.$ We will exploit the equality 
\begin{align}\label{kintparts} \int_X \phi \scL_{\omega_k} (\psi) \omega_k^n = \int_X \langle \mathcal{D}_k \phi, \mathcal{D}_k \psi \rangle_{\omega_k} \omega_k^n. 
\end{align}

We first note that $$\omega_k^{m+n} = {m+n \choose m } k^n \omega_X^m \wedge \omega_B^n + O(k^{n-1}).$$ So if $\psi$ is in the kernel of $\L_0$, the left hand side of \eqref{kintparts} becomes $${m+n \choose m } k^{n-1} \int_X \phi \L_1 (\psi) \omega_X^m \wedge \omega_B^n  + O(k^{n-2}).$$ We wish to compare this with the $k^{n-1}$-term of the right hand side of \eqref{kintparts}. This is a bit more involved, and we begin with a description of $\mathcal{D}_k = \overline{\partial}_X \circ \nabla_{\omega_k}^{1,0}.$ Note that the $k$-dependency is only in the gradient. 

In general, on an $n$-dimensional K\"ahler manifold with coordinates $\xi^p$ and Riemannian metric $g$, the gradient is $$\nabla_g f = \sum_{p,q} g^{p \overline{q}} \frac{\partial f}{\partial \overline{\xi}^q } \frac{\partial}{\partial \xi^p}.$$
In a holomorphic trivialisation of $X$, we now have $\xi^{\alpha} = w^{\alpha}$, the fibre coordinates, for $\alpha = 1, \cdots, m$, and $\xi^{m+i} = z^i$, the base coordinates, for $i=1, \cdots, n$. We will use Greek letters for the fibre indices and Roman letters for the base indices. The gradient is then $$ \nabla_{\omega_k} f = \sum_{\alpha, \beta}  g_F^{\alpha \overline{\beta}} \frac{\partial f}{\partial \overline{w}^{\beta} } \frac{\partial}{\partial w^{\alpha}} + \sum_{i,j } \big( k g_B + (g_X)_{\scH} \big)^{i \overline{j}} \frac{\partial f}{\partial \overline{z}^j } \frac{\partial}{\partial z^i} .$$ The leading order term is therefore $ \sum_{\alpha, \beta}  g_F^{\alpha \overline{\beta}} \frac{\partial f}{\partial \overline{w}^{\beta} } \frac{\partial}{\partial w^{\alpha}} $ since $$\big( k g_B + (g_X)_{\scH} \big)^{i \overline{j}} = \frac{1}{k} \big( g_B + k^{-1} (g_X)_{\scH} \big)^{i \overline{j}}$$ is $O(k^{-1}).$ If $f$ is a fibrewise holomorphy potential, we then have that $$ \scD_k f = \sum_{\alpha, \beta, j} \frac{\partial}{\partial \overline{z}^j } \left( g_F^{\alpha \overline{\beta}} \frac{\partial f}{\partial \overline{w}^{\beta} } \right)  \frac{\partial}{\partial w^{\alpha}} \otimes d\overline{z}^j+ O(k^{-1}),$$ since the fibrewise components when applying $\overline{\partial}_X$ vanish by assumption. This leading order term will be a crucial operator for us. We define $$\mathcal{R} (f) = \bar{\partial}_B ( \nabla_{\scV}^{1,0} f) = \sum_{\alpha, \beta, j} \frac{\partial}{\partial \overline{z}^j } \left( g_F^{\alpha \overline{\beta}} \frac{\partial f}{\partial \overline{w}^{\beta} } \right)  \frac{\partial}{\partial w^{\alpha}} \otimes d\overline{z}^j.$$ Note that on $C^{\infty}_E$, the kernel of $\mathcal{R}$ consists precisely of the fibrewise holomorphy potentials that in fact are globally holomorphy potentials on $X$.

Next we consider $\langle \cdot , \cdot \rangle_{\omega_k}$, which is a norm on $TX \otimes T^* X.$ We will consider its restriction to $\scV^{1,0} \otimes \pi^* \Lambda^{0,1} B$. From our computation of $\scD_k f$, the key is to compute $\langle \cdot , \cdot \rangle_{\omega_k}$ on basis vectors $\frac{\partial}{\partial w^{\alpha}} \otimes d\overline{z}^j$. First we observe
\begin{align*} \left\langle \frac{\partial}{\partial w^{\alpha}} \otimes d\overline{z}^j , \frac{\partial}{\partial \overline{w}^{\gamma} } \otimes dz^p \right\rangle_{\omega_k} = \left\langle \frac{\partial}{\partial w^{\alpha}} , \frac{\partial}{\partial \overline{w}^{\gamma}} \right\rangle_{\omega_k} \cdot \langle  d\overline{z}^j , d z^p \rangle_{\omega_k} 
\end{align*}
and $$  \left\langle \frac{\partial}{\partial w^{\alpha}} , \frac{\partial}{\partial w^{\gamma}} \right\rangle_{\omega_k} = g^F_{\alpha \overline{\beta}},$$ which is independent of $k$. Moreover, since $$  \left\langle  \frac{\partial}{\partial \overline{z}^j} , \frac{\partial}{\partial z^p} \right\rangle_{\omega_k}  = k g_{\overline{j} p} + O(1),$$ we have  $$ \langle  d\overline{z}^j , d z^p \rangle_{\omega_k}  = k^{-1} g_B^{\overline{j} p} + O(k^{-2}).$$ 

The upshot is therefore that \begin{align*} \left\langle \frac{\partial}{\partial w^{\alpha}} \otimes d\overline{z}^j , \frac{\partial}{\partial \overline{w}^{\gamma} } \otimes dz^p \right\rangle_{\omega_k} = k^{-1}  \left\langle \frac{\partial}{\partial w^{\alpha}} \otimes d\overline{z}^j , \frac{\partial}{\partial \overline{w}^{\gamma} } \otimes dz^p \right\rangle_{\omega_F + \omega_B} + O(k^{-2}),
\end{align*}
and so 
$$\langle \mathcal{D}_k \phi, \mathcal{D}_k \psi \rangle_{\omega_k} = k^{-1} \langle \mathcal{R} \phi , \mathcal{R} \psi \rangle_{\omega_F + \omega_B} + O(k^{-2})$$ for fibrewise holomorphy potentials $\phi, \psi.$ Thus the right hand side of \eqref{kintparts} is $$ {n+m \choose m } k^{n-1} \int_X \langle \mathcal{R} \phi , \mathcal{R} \psi \rangle_{\omega_F + \omega_B}\omega_F^m \wedge \omega_B^n + O(k^{n-2}).$$ Since this holds for all $k$, it follows that 
\begin{align}\label{adjointproperty} \int_X \phi \L_1 (\psi) \omega_X^m \wedge \omega_B^n = \int_X \langle \mathcal{R} \phi , \mathcal{R} \psi \rangle_{\omega_F + \omega_B} \omega_X^m \wedge \omega_B^n .
\end{align}
Note in particular that this implies that $p \circ \L_1$ is self-adjoint.

Next, we wish to see that $p \circ \L_1$ is elliptic. As noted above, the fibrewise Hamiltonian vector fields on $X$, with respect to $\omega_X$, form an infinite-dimensional bundle over $B$. Under our assumptions, there is a finite-dimensional subbundle $E \to B$ whose sections are fibrewise holomorphic vector fields. By identifying a Hamiltonian vector field with its (unique) mean value zero Hamiltonian function, we can think of sections of $E$ over an open set $U \subseteq B$  as certain functions on $\pi^{-1} (U)$, the points in $X$ lying over $U$. In particular, smooth global sections of $E$ correspond exactly to functions in $C^{\infty}_E (X)$. 

That this is a, say rank $r$, vector bundle means that for every point $p \in B$, we can find an open set $U$ containing $p$, and $r$ fibrewise Hamiltonian functions $h_1, \cdots, h_r$ on $\pi^{-1} (U)$ such that any function $h$ in $\pi^{-1} (U)$ which is fibrewise a holomorphy potential (fibrewise of average zero) is of the form $h = \sum_i f_i h_i,$ for some $f_i \in C^{\infty} (U)$. 

We are interested in the mapping properties $p \circ \L_1$, where we recall $p$ is the projection operator to $C^{\infty}_E (X)$. Note, however, that if $\phi, \psi \in C^{\infty}_E(X)$, then $\langle \phi , \L_1 \psi \rangle = \langle \phi, p (\L_1 \psi) \rangle.$ Thus, we will suppress the projection operator in what follows. Equation \eqref{adjointproperty} can now be recast as $$ \int_B \langle \phi , \L_1 \psi \rangle_1 \omega_B^n = \int_B \left( \int_{X/B} \langle \mathcal{R} \phi , \mathcal{R} \psi \rangle_{\omega_F + \omega_B} \omega_X^m  \right) \omega_B^n.$$

We now wish to show that $p \circ \L_1 : \Gamma (E) \to \Gamma (E)$ is elliptic (of order $2$). The key point is that for $f \in C^{\infty} (B)$ and $h \in C^{\infty}_E (X)$, we have \begin{align}\label{symboleqn} \L_1 ( f h ) = 2 \overline{\partial}_B^* \overline{\partial}_B (B) (f) \cdot \Delta_{\scV} (h) + Q(f,p),\end{align} where $Q$ is of order $1$ in $f$. If we establish this, we see that in a trivialisation of $E$ over $U$ given by $r$ fibrewise holomorphy potentials $h_i$, if $\sum_i f_i h_i$ is a general local section of $E$, the symbol of $ \L_1 (\sum_ i f_i h_i ) $ is that of $2 \sum_i \Delta_{B} (f_i) \cdot \Delta_{\scV} (h_i ).$ Moreover, $$p \big( 2 \sum_i \Delta_{B} (f_i) \cdot \Delta_{\scV} (h_i ) \big) = \sum_{i,j} \Delta_{B} (f_i) \cdot \left( \int_{\pi^{-1}(U)/U} \langle \nabla h_i, \nabla h_j \rangle \omega_b^m \right) h_j,$$ since $ 2 \int_{\pi^{-1}(U)/U} \Delta_{\scV} (h_i ) \cdot h_j \omega_b^m = \int_{\pi^{-1}(U)/U} \langle \nabla h_i, \nabla h_j \rangle \omega_b^m.$ The $\nabla h_i$ form a basis of the holomorphic vector fields with a potential on each fibre, and so the matrix $\langle \nabla h_i, \nabla h_j \rangle$ is invertible. The symbol of $p \circ \L_1$ is the composition of this matrix with the symbol of the diagonal component wise base Laplacian $$ (f_1, \cdots, f_r) \mapsto (\Delta_B f_1, \cdots, \Delta_B f_r),$$ which then is elliptic since the Laplacian is.

To establish \eqref{symboleqn}, we will use the general identity $$\mathcal{D}_{\omega}^* \mathcal{D}_{\omega} (\phi ) = \Delta_{\omega}^2 (\phi) + \langle \textnormal{Ric} (\omega) , \ddc \phi \rangle + \langle \nabla S (\omega), \nabla \phi \rangle, $$ see \cite[p. 59]{szekelyhidi-book}. Applying this to $\omega_k$, we can ignore the last term involving the scalar curvature because it only involves a single derivative. The $\Delta^2$-term expands as $$\Delta_k^2 = \Delta_{\scV}^2 + k^{-1} ( \Delta_{\scV} \circ \Delta_{\scH} + \Delta_{\scH} \circ \Delta_{\scV}) + O (k^{-2}),$$ since $\Delta_k = \Delta_{\scV} + k^{-1} \Delta_{\scH} + O(k^{-2}).$ For a function of the form $f \cdot h$ with $f$ pulled back from $B$, we have  $$\Delta_{\scV}(fh) = f \Delta_{\scV} (h).$$ Moreover, $\Delta_{\scH} (fh)$ equals $\Delta_{\scH} (f) \cdot h$ plus terms which involve at most one derivatives in $f$. So the contribution to the $k^{-1}$ term from $\Delta^2_k$ with the maximal number of derivatives in $f$ is then as claimed.

For the term involving the Ricci curvature of $\omega_k$, we see from the expansion of the Ricci curvature in Lemma \ref{ricci-expansion} that we will get a contribution $$ \langle \rho_{\scH} +  \Ric(\omega_B) , \ddb_{\scH} f \rangle_{\omega_k} \cdot h$$ that involves two derivatives of $f$. But since the inner product in the horizontal direction is $O(k)$, the induced inner product on $\Lambda^2$ is $O(k^{-2}).$ Thus we will not see this term in the linearisation to order $k^{-1}$. This completes the proof of the identity \eqref{symboleqn}, and hence the proof of the ellipticity of $p \circ L_1$ thought of as an operator on $E$. 

We also remark that the technique above implies that $$ \int_{X} \pi^*(\phi) \L_1 (\psi) \omega_X^m \wedge \omega_B^n = \int_B \phi\left( \int_{X/B} \L_1(\psi)\omega_b^m\right)\omega_B ^n= 0 $$ for any $\phi \in C^{\infty} (B)$, since $\int_X \pi^* (\phi) \mathcal{L}_{\omega_k}(\psi)  \omega_k^n = \int_X \mathcal{L}_{\omega_k}(\pi^* (\phi)) \psi  \omega_k^n$ is $O(k^{n-2})$ (the fact that this is $O(k^{n-2})$ rather than $O(k^{n-1})$ will follow from Proposition \ref{prop:linearisation}). So $\L_1 (\psi)$ is always orthogonal to $C^{\infty} (B)$. This shows that $\L_1$, without the projection $p$, is a vertical operator: it send $C^{\infty}(X)$ to $C^{\infty}_0 (X)$. 

We have now established:
\begin{theorem}\label{thm:mappingprops} Consider the operator $p \circ \L_1 : \Gamma (E) \to \Gamma(E)$. This is an elliptic operator of order two which is self-adjoint and with kernel consisting precisely of sections corresponding to global holomorphy potentials on $X$. 
\end{theorem}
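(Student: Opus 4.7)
The plan is to derive the three claims — ellipticity, self-adjointness, and the kernel description — directly from the two identities established in the preceding discussion: the Dirichlet-type formula \eqref{adjointproperty} expressing the bilinear pairing $\int_X \phi\, \L_1(\psi)\,\omega_X^m\wedge\omega_B^n$ in terms of the first-order operator $\mathcal{R}$, and the symbol formula \eqref{symboleqn} for $\L_1$ on decomposable sections.

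Self-adjointness falls out of \eqref{adjointproperty} immediately, since its right-hand side $\int_X \langle \mathcal{R}\phi,\mathcal{R}\psi\rangle_{\omega_X+\omega_B}\,\omega_X^m\wedge\omega_B^n$ is manifestly symmetric in $\phi$ and $\psi$; that the projection $p$ does not interfere is guaranteed by the observation, recorded just after \eqref{symboleqn}, that $\L_1$ applied to any element of $C^\infty_E(X)$ already lies in the $L^2$-orthogonal complement of $C^\infty(B,\R)$. For the kernel, I would feed \eqref{adjointproperty} back into itself: if $\phi \in \Gamma(E)$ satisfies $p(\L_1\phi)=0$, then pairing with $\phi$ itself gives $\int_X |\mathcal{R}\phi|^2_{\omega_X+\omega_B}\,\omega_X^m\wedge\omega_B^n =0$, hence $\mathcal{R}\phi=0$; conversely, $\mathcal{R}\phi=0$ forces the right-hand side of \eqref{adjointproperty} to vanish against any test $\psi$, so $p(\L_1\phi)=0$. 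Since $\mathcal{R}\phi = \bar\partial_B \nabla^{1,0}_{\scV}\phi$, the condition $\mathcal{R}\phi=0$ asserts precisely that the fibrewise holomorphic vector field $\nabla^{1,0}_{\scV}\phi$ extends to a global holomorphic vector field on $X$, which identifies the kernel with the sections of $E$ corresponding to global holomorphy potentials on $X$.

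For ellipticity, I would reduce to a local symbol computation. Trivialising $E$ over an open $U \subset B$ by fibrewise holomorphy potentials $h_1,\ldots,h_r$ and writing a general section as $\sum_i f_i h_i$ with $f_i \in C^\infty(U)$, the identity \eqref{symboleqn} shows that the only contribution to $\L_1(\sum_i f_i h_i)$ carrying two derivatives in the $f_i$ is $2\sum_i \Delta_B(f_i)\,\Delta_{\scV}(h_i)$. Projecting against the basis $\{h_j\}$ and using the fibrewise integration by parts identity $2\int_{X_b}\Delta_{\scV}(h_i) h_j\,\omega_b^m = \int_{X_b}\langle\nabla h_i,\nabla h_j\rangle\,\omega_b^m$, the principal symbol at $\xi \in T^*_b B$ becomes $|\xi|_{\omega_B}^2$ times the fibrewise Gram matrix $G_{ij}(b) = \int_{X_b}\langle\nabla h_i,\nabla h_j\rangle\,\omega_b^m$; this is positive definite because the $\nabla h_i$ form a basis of the holomorphic vector fields on $X_b$, which yields invertibility of the symbol for $\xi \neq 0$ and hence ellipticity of order two. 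The only step requiring appreciable care is the verification of \eqref{symboleqn} itself, where one must confirm that the Ricci term $\langle\Ric(\omega_k),\ddc(fh)\rangle_{\omega_k}$ from the Lichnerowicz expansion \eqref{eqn:lich} contributes only at order $k^{-2}$ in the highest $f$-derivatives; this is where the Ricci expansion of Lemma \ref{ricci-expansion} combines with the $O(k^{-2})$ decay of the induced pairing on horizontal $(1,1)$-forms to absorb what would otherwise be a competing top-order term.
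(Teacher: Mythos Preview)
Your proposal is correct and follows essentially the same route as the paper: the paper's argument is laid out in the discussion immediately preceding the theorem statement, and you have accurately extracted and organised its three components --- the symmetry of \eqref{adjointproperty} for self-adjointness, the $\mathcal{R}^*\mathcal{R}$-structure implicit in \eqref{adjointproperty} for the kernel identification, and the symbol computation via \eqref{symboleqn} together with the Gram-matrix argument for ellipticity. If anything, your write-up of the kernel step (pairing $\phi$ against itself and invoking orthogonality of the decomposition to pass from $p(\L_1\phi)=0$ to $\int_X\phi\,\L_1\phi=0$) makes explicit a point the paper leaves to the reader.
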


\subsection{The linearisation of the optimal symplectic curvature: general case}\label{subsec:general} In Section \ref{subsec:linearisation-special} we established the properties of the linearisation of the optimal symplectic curvature in the special case that $k\omega_B+\omega_X$ is a constant scalar curvature K\"ahler metric to order $k^{-2}$. This is equivalent to asking that the function $\phi_{R,1}$ constructed in Section \ref{subsec:kinverse} vanishes.

As remarked earlier, $- \mathcal{L}_{\omega_k}$ is the linearised operator to order $k^{-1}$ only if the metric is constant to order $k^{-1}$. This is not automatic even if we have an optimal symplectic connection, and in general we need to consider instead a metric of the form $\Omega_k = \omega_k + k^{-1} \ddc \phi$, where in our applications $\phi$ will be the function  $\phi_{R,1}$ constructed in Section \ref{subsec:kinverse}. We will show that the Lichnerowicz operator is the same on $E$, to highest order in $k$ and so the mapping properties described above persist under such a change to $\omega_k$. As before, we will use
\begin{align}\label{kintparts2} \int_X \phi \scL_{\Omega_k} (\psi) \Omega_k^n = \int_X \langle \mathcal{D}_k \phi, \mathcal{D}_k \psi \rangle_{\Omega_k} \Omega_k^n. 
\end{align}
Here $\mathcal{D}_k$ is now the operator associated to $\Omega_k$, not $\omega_k$.

Again we have that $$\Omega_k^n = {n+m \choose m } k^n \omega_X^m \wedge \omega_B^n + O(k^{n-1}).$$ So if $\psi$ is in the kernel of $\L_0$, the left hand side of \eqref{kintparts} becomes $${n+m \choose m } k^{n-1} \int_X \phi \L_1 (\psi) \omega_X^m \wedge \omega_B^n  + O(k^{n-2}).$$ So there is no change in the left hand side of the equation. 

The next step is to compare this with the $k^{n-1}$-term of the right hand side of \eqref{kintparts2}. We begin with a description of $\mathcal{D}_k = \overline{\partial}_X \circ \nabla_{\Omega_k}^{1,0}.$ Note that the dependency on $k$ and $\phi$ is only in the gradient. The gradient is then $$ \nabla_{\Omega_k} f = \sum_{\alpha, \beta}  (g_F + k^{-1} \ddc \phi)^{\alpha \overline{\beta}} \frac{\partial f}{\partial \overline{w}^{\beta} } \frac{\partial}{\partial w^{\alpha}} + \sum_{i,j } \big( k g_B + (g_X)_{\scH} \big)^{i \overline{j}} \frac{\partial f}{\partial \overline{z}^j } \frac{\partial}{\partial z^i} .$$ The leading order term is therefore $ \sum_{\alpha, \beta}  g_F^{\alpha \overline{\beta}} \frac{\partial f}{\partial \overline{w}^{\beta} } \frac{\partial}{\partial w^{\alpha}} $ since the inverse of $$g_F + k^{-1} \ddc \phi $$ is $g_F^{-1} + O(k^{-1})$ and $$\big( k g_B + (g_X)_{\scH} \big)^{i \overline{j}} = \frac{1}{k} \big( g_B + k^{-1} (g_X)_{\scH} \big)^{i \overline{j}}$$ is $O(k^{-1}).$ If $f$ is a fibrewise holomorphy potential, we then have that $$ \scD_k f = \sum_{\alpha, \beta, j} \frac{\partial}{\partial \overline{z}^j } \big( g_F^{\alpha \overline{\beta}} \frac{\partial f}{\partial \overline{w}^{\beta} } \big)  \frac{\partial}{\partial w^{\alpha}} \otimes d\overline{z}^j+ O(k^{-1}),$$ since the fibrewise components when applying $\overline{\partial}_X$ vanish by assumption. Note that there is no dependency on $\phi$ here, so we have the same operator as before. 

The argument for $\langle \cdot , \cdot \rangle_{\Omega_k}$ proceeds in a similar way. First we again observe
\begin{align*} \left\langle \frac{\partial}{\partial w^{\alpha}} \otimes d\overline{z}^j , \frac{\partial}{\partial \overline{w}^{\gamma} } \otimes dz^p \right\rangle_{\Omega_k} = \left\langle \frac{\partial}{\partial w^{\alpha}} , \frac{\partial}{\partial \overline{w}^{\gamma}} \right\rangle_{\Omega_k} \cdot \langle  d\overline{z}^j , d z^p \rangle_{\Omega_k} 
\end{align*}
and $$  \left\langle \frac{\partial}{\partial w^{\alpha}} , \frac{\partial}{\partial w^{\gamma}} \right\rangle_{\Omega_k} = \left(g^F+\frac{1}{k} \ddc \phi\right)_{\alpha \overline{\beta}} = g^F_{\alpha \overline{\beta}} + O(\frac{1}{k}),$$ which is independent of $k$ to leading order. Moreover, we still have $$  \left\langle  \frac{\partial}{\partial \overline{z}^j} , \frac{\partial}{\partial z^p} \right\rangle_{\Omega_k}  = k g_{\overline{j} p} + O(1),$$ as our horizontal metric is now $k g_B + (\omega_X)_{\scH} + \frac{1}{k} \ddc \phi_{\scH}$. Thus we have  $$ \langle  d\overline{z}^j , d z^p \rangle_{\omega_k}  = k^{-1} g_B^{\overline{j} p} + O(k^{-2}).$$  

The upshot is therefore that \begin{align*} \left\langle \frac{\partial}{\partial w^{\alpha}} \otimes d\overline{z}^j , \frac{\partial}{\partial \overline{w}^{\gamma} } \otimes dz^p \right\rangle_{\omega_k} = k^{-1}  \left\langle \frac{\partial}{\partial w^{\alpha}} \otimes d\overline{z}^j , \frac{\partial}{\partial \overline{w}^{\gamma} } \otimes dz^p \right\rangle_{\omega_F + \omega_B} + O(k^{-2}),
\end{align*}
and so 
$$\left\langle \mathcal{D}_k \phi, \mathcal{D}_k \psi \right\rangle_{\omega_k} = k^{-1} \langle \mathcal{R} \phi , \mathcal{R} \psi \rangle_{\omega_F + \omega_B} + O(k^{-2})$$ for fibrewise holomorphy potentials $\phi, \psi.$ Thus the right hand side of \eqref{kintparts} is $$ {n \choose m } k^{n-1} \int_X \langle \mathcal{R} \phi , \mathcal{R} \psi \rangle_{\omega_F + \omega_B}\omega_F^m \wedge \omega_B^n + O(k^{n-2}).$$ Since this holds for all $k$, it follows that
\begin{align}\label{adjointproperty} \int_X \phi L_1 (\psi) \omega_X^m \wedge \omega_B^n = \int_X \langle \mathcal{R} \phi , \mathcal{R} \psi \rangle_{\omega_F + \omega_B} \omega_X^m \wedge \omega_B^n .
\end{align}
Note in particular that this implies that $p \circ \L_{1,\phi}$ is self-adjoint and in fact independent of $\phi$. So the operator remains unchanged upon this perturbation.

To summarise, we have just shown that the second term in the expansion of the Lichnerowicz operator is unaffected by the addition of $k^{-1} \ddc \phi$, when acting on fibrewise holomorphy potentials (with respect to $\omega_F$, but to leading order these are the same as holomorphy potentials with respect to $\omega_F + k^{-1} \ddc _{\scV} \phi$, where the subscript means the fibrewise restriction of $\ddc \phi$). When $\phi$ is chosen such that $\Omega_k$ is a cscK metric to order $k^{-1}$ (meaning $S(\Omega_k) = S_0 + S_1 k^{-1} + O(k^{-2})$), we saw that the linearisation of the scalar curvature map is the negative of the Lichnerowicz operator to order $k^{-1}$, and so in particular for that choice of $\phi$, the above holds for the linearised operator, not just the Lichnerowicz operator. The argument above means that we have proven the case $r=1$ in Proposition \ref{prop:linearisation} below, which will be crucial for constructing better approximate solutions to the cscK equation.

\subsection{The discrete automorphism group case} While we are ultimately interested in the construction of extremal metrics on $(X,kL+H)$ for $k \gg0$, we begin by discussing the case that the automorphism groups $\Aut(X,H)$ and $\Aut(p)$ of $(X,H)$ and $p: B \to \M$ are discrete. This special case contains the bulk of the core new ideas of our construction, and in this case the argument is technically and notationally simpler. Our next goal is to construct, for any given $r \geq 0$, a K\"ahler potential $\phi_{k,r} \in C^{\infty}(X)$ which satisfies to the equation $$S(\omega_k + \ddb \phi_{k,r}) - \hat S_{k,r}= O(k^{-r}),$$ with $\hat S_{k,r}$ an appropriate topological constant.

\begin{theorem}\label{thm:cscKapprox} Let $\pi : (X,H) \to (B,L)$ be a fibration with $\Aut(X,H)$ discrete. Suppose
\begin{enumerate}[(i)]
\item $\omega_X \in c_1(H)$ restricts to a cscK metric on $X_b$ for all $b\in B$;
\item letting $q: B \to \M$ be the moduli map, $\Aut(q)$ is discrete and $\omega_B$ satisfies the twisted cscK equation  $$S(\omega_B) - \Lambda_{\omega_B}\alpha = \hat S_{\alpha}$$  with $\alpha$ the the Weil-Petersson metric on $B$;
\item the automorphism groups of the fibres $(X_b,H_b)$ all have the same dimension, and $\omega_X$ satisfies the optimal symplectic connection equation $$p(\Delta_{\scV}(\Lambda_{\omega_B} \mu^*(F_{\scH})) + \Lambda_{\omega_B} \rho_{\scH})=0.$$
\end{enumerate}Then for each integer $r \geq 1$ there exists functions $$f_1, \cdots , f_r \in C^{\infty}(B), \qquad d_1,\hdots, d_{r} \in C^{\infty}_E(X),\qquad l_1, \cdots , l_r \in C^{\infty}_R (X)$$ and constants $c_0, \cdots, c_r$ such that if we let $$\phi_{k,r} = \sum_{j=1}^{r} f_j k^{2-j}, \quad \delta_{k,r} =  \sum_{j=1}^r d_j k^{1-j}, \quad \lambda_{k,r} = \sum_{j=1}^r l_j k^{-j}, \quad \hat S_{k,r} = \sum_{j=0}^r c_j k^{-j},$$ then the K\"ahler form $$\omega_{k,r} = k\omega_B + \omega_X + \ddb ( \phi_{k,r} +\delta_{k,r} +  \lambda_{k,r} )$$ satisfies $$S( \omega_{k,r})  = \hat S_{k,r}  + O(k^{-r-1}).$$
\end{theorem}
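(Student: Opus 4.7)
The plan is induction on $r$. For $r=1$, Proposition \ref{prop:firstapprox} and the construction of $\phi_{R,1}$ in Section \ref{subsec:kinverse} give the result directly, with $f_1 = 0$, $d_1 = 0$, $l_1 = \phi_{R,1}$, $c_0 = \hat{S}_0$ and $c_1 = \hat{S}_1$.

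For the inductive step, assume $S(\omega_{k,r-1}) = \hat{S}_{k,r-1} + k^{-r} E_r + O(k^{-r-1})$ for some smooth $E_r \in C^\infty(X)$, and decompose $E_r = E_r^B + E_r^E + E_r^R$ via \eqref{eq:function-decomposition}. Since $\omega_{k,r-1}$ is approximately cscK to order $k^{-r}$, the linearisation of the scalar curvature at $\omega_{k,r-1}$ differs from $-\L_{\omega_{k,r-1}}$ only by a term involving $\nabla S(\omega_{k,r-1}) = O(k^{-r})$, which is harmless at the order we need. Using the expansion $\L_{\omega_k} = \D^*_{\scV}\D_{\scV} + k^{-1}\L_1 + O(k^{-2})$, the calibrated powers of $k$ in the ansatz arrange that the new pieces $f_r k^{2-r}$, $d_r k^{1-r}$ and $l_r k^{-r}$ each contribute to the $k^{-r}$-coefficient of $S$ via a single elliptic operator: $-\scL_\alpha(f_r)$, $-\L_1(d_r)$ and $-\D^*_{\scV}\D_{\scV}(l_r)$ respectively, where $\scL_\alpha$ is the twisted cscK linearisation on $(B,\omega_B)$ (i.e.\ the operator of Theorem \ref{fibrations-result} with $q^*\Omega = \alpha$, the Weil-Petersson form), as follows from Lemma \ref{ricci-expansion}, Proposition \ref{WP-rho} and the expansion of $\Delta_k^2$.

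One then solves in three successive stages so the three contributions decouple to leading order. First, by Theorem \ref{fibrations-result} combined with the discreteness of $\Aut(q)$ (hence $\bar\mfp = \R$), choose $f_r \in C^\infty(B)$ and a constant $c_r$ with $\scL_\alpha(f_r) = E_r^B + c_r$; since the result is pulled back from $B$, this step introduces no $C^\infty_E$ or $C^\infty_R$ contribution at order $k^{-r}$. Second, by Theorem \ref{thm:mappingprops} combined with the discreteness of $\Aut(X,H)$ (which forces the kernel of $p \circ \L_1$, consisting of global holomorphy potentials on $X$, to be trivial), choose $d_r \in C^\infty_E(X)$ with $(p \circ \L_1)(d_r) = E_r^E$; by the orthogonality observation of Section \ref{subsec:linearisation-special}, $\L_1$ applied to a section of $E$ has no $C^\infty(B)$-component, though it may have a $C^\infty_R(X)$-component. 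Third, the fibrewise operator $\D^*_{\scV}\D_{\scV}$ is an isomorphism on $C^\infty_R(X)$ by fibrewise ellipticity, so one chooses $l_r$ killing the remaining $C^\infty_R(X)$-component.

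The main technical obstacle is bookkeeping: $\L_{\omega_{k,r-1}}$ is not $\L_{\omega_k}$ itself but a perturbation of it, with a Taylor expansion in $k$ having infinitely many terms. All contributions beyond the explicit leading ones above must be verified to enter at order $O(k^{-r-1})$ or smaller so that they can be absorbed into the remainder, and this requires checking that the powers of $k$ in the ansatz and in the expansion of $\L$ line up correctly. The pointwise nature of the estimates (Remark \ref{norminuse}) keeps the analysis purely algebraic at this stage; global norm estimates are postponed to Section \ref{sec:implicit}.
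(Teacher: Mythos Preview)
Your argument matches the paper's proof: induction on $r$, with the inductive step correcting the $C^\infty(B)$, $C^\infty_E(X)$, and $C^\infty_R(X)$ components of the error in that order using the invertibility of $\scL_\alpha$, $p\circ\L_1$, and $\scD^*_\scV\scD_\scV$ respectively (the paper packages the linearisation facts you use as Proposition~\ref{prop:linearisation}). One small correction: adding $k^{2-r}\ddc f_r$ \emph{does} alter the $E$ and $R$ components at order $k^{-r}$ --- only the $C^\infty(B)$-projection of $D_2(f_r)$ equals $-\scL_\alpha(f_r)$, not $D_2(f_r)$ itself --- but this is harmless since those modified components are then fed into the subsequent $E$ and $R$ steps, exactly the triangular structure you already invoke for the $E\to R$ step.
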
 

The basic idea of the proof is to use an inductive argument. Supposing the theorem has been proven for level $r$, one writes $$ S( \omega_{k,r}) = \hat S_{k,r} + (\psi_{B,r+1} + \psi_{E,r+1} + \psi_{R,r+1})k^{r+1} + O(k^{r+2}) $$ under the decomposition of $$\psi_{r+1} = \psi_{B,r+1} + \psi_{E,r+1} + \psi_{R,r+1} \in C^{\infty}(X) \cong C^{\infty}(B) \oplus C^{\infty}_E(X) \oplus C_R^{\infty}(X) $$ of equation \eqref{eq:function-decomposition} and using the obvious notation. The next step is to add K\"ahler potentials $f_{r+1}, d_{r+1}$ and $l_{r+1}$ in such a way that the scalar curvature of $$S( \omega_{k,r}+\ddb (f_{r+1}, d_{r+1} + l_{r+1})) =\hat S_{k,r+1}  + O(k^{-r-2}).$$ As we are only interested in the first order change of the scalar curvature upon adding such K\"ahler potentials, the crucial point is to understand the linearisation of $S(\omega_{k,r})$, which to first order is intimately related to the linearisation of $S(\omega_{k})$. In turn, this linearisation is intimately related to the linearisation of the twisted scalar curvature, the optimal symplectic curvature operator and also the fibrewise scalar curvature: the fact that we are linearisating at solutions to the various equations will play a crucial role in ensuring the operators are invertible on the relevant functions spaces. We note that the base case $r=1$ holds by choosing $f_1 =0$ and $d_1 =0$. 

In order to kill the error terms, we need to understand the linearisation of the scalar curvature at $\omega_{k,r}$ and not just $\omega_k$ itself.
\begin{proposition}\label{prop:linearisation} Let $ \omega_{k, r} $ be a metric of the form in the statement of Theorem \ref{thm:cscKapprox} that solves the cscK equation to order $-1$, with $f_1=0=d_1$. Then the linearisation $\mathcal{L}_{k,r}$ of the scalar curvature operator at $\omega_{k,r}$ satisfies the following.
\begin{enumerate}[(i)] \item There is an expansion $$ \mathcal{L}_{k,r} (f) =  - \D^*_{\scV}\D_{\scV}(f) + D_1 (f) k^{-1} + D_2 (f) k^{-2} + O(k^{-3}),$$ where $D_1$ depends on $l_1$ and $D_2$ depends only on $l_1, l_2$ and $d_2$.
\item If $f \in C^{\infty}(B)$, then $D_1 (f)=0$ and $$\int_{X / B} D_2 (f) \omega_X^{m} = - \mathcal{L}_{\alpha} (f) .$$
\item If $f \in C^{\infty}_E(X)$, then $$p \circ D_1 (f) = - p \circ \mathcal{L}_1 (f).$$
\end{enumerate}
\end{proposition}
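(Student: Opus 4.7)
The overall plan is to start from the identity
$$\L_\omega(f) = -\scD^*_\omega\scD_\omega(f) + \tfrac{1}{2}\langle \nabla S(\omega),\nabla f\rangle_\omega$$
applied at $\omega = \omega_{k,r}$, and to expand $\scD^*_{k,r}\scD_{k,r}$ using the decomposition $\scD^*_\omega\scD_\omega\phi = \Delta_\omega^2\phi + \langle \Ric(\omega),\ddc\phi\rangle + \langle \nabla S(\omega),\nabla\phi\rangle$ together with the expansions of $\Delta_k$ and $\Ric(\omega_k)$ from Section \ref{sec:approx}. Since by hypothesis $S(\omega_{k,r})$ is constant up to $O(k^{-2})$, $dS(\omega_{k,r})$ is $O(k^{-2})$ as a one-form; contracting with the inverse of $\omega_{k,r}$, whose vertical part is $O(1)$ and whose horizontal part is $O(k^{-1})$, shows that the gradient correction $\tfrac{1}{2}\langle \nabla S(\omega_{k,r}),\nabla f\rangle$ is $O(k^{-2})$ and hence contributes neither to the leading order $\scD^*_\scV\scD_\scV$ nor to $D_1$. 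Tracking the degrees in $k^{-1}$, and recalling that each $f_j \in C^\infty(B)$ adds only a horizontal pullback $\ddc f_j$ leaving every fibre restriction $\omega_{k,r}|_{X_b}$ unchanged, yields part (i) with the claimed dependence of $D_1$ and $D_2$ on the $l_j$ and $d_j$.

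For part (ii) with $f \in C^\infty(B)$, iterating the expansion $\Delta_{k,r} f = k^{-1}\Delta_B f + O(k^{-2})$ gives $\Delta_{k,r}^2 f = k^{-2}\Delta_B^2 f + O(k^{-3})$, so no $k^{-1}$ contribution arises. The Ricci term $\langle \Ric(\omega_{k,r}),\ddc f\rangle_{\omega_{k,r}}$ is a horizontal contraction, and since the horizontal inner product on two-forms is $O(k^{-2})$, Lemma \ref{ricci-expansion} contributes $\langle \rho_\scH + \Ric(\omega_B),\ddc f\rangle_{\omega_B}$ at order $k^{-2}$ and nothing at order $k^{-1}$. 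The gradient term $\langle \nabla S(\omega_{k,r}),\nabla f\rangle$ is $O(k^{-3})$ here since $\nabla f$ is purely horizontal and hence $O(k^{-1})$. Collecting, $D_1(f) = 0$ and $D_2(f) = -\bigl(\Delta_B^2 f + \langle \rho_\scH + \Ric(\omega_B),\ddc f\rangle_{\omega_B}\bigr)$. Integrating over fibres against $\omega_X^m$, applying Proposition \ref{WP-rho} to replace $\int_{X/B}\rho_\scH \wedge \omega_X^m$ by $-\alpha$, rewriting $\Delta_B^2 f + \langle \Ric(\omega_B),\ddc f\rangle_{\omega_B}$ as $\scD^*_{\omega_B}\scD_{\omega_B}(f) - \langle \nabla S(\omega_B),\nabla f\rangle_{\omega_B}$, and using the twisted cscK identity $S(\omega_B) - \Lambda_{\omega_B}\alpha = \hat S_\alpha$ to convert $\nabla S(\omega_B)$ into $\nabla \Lambda_{\omega_B}\alpha$, identifies the fibre integral of $D_2(f)$ with $-\L_\alpha(f)$ through the explicit formula for $\L_\alpha$ in Theorem \ref{fibrations-result}.

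For part (iii) with $f \in C^\infty_E(X)$, the gradient term $\tfrac{1}{2}\langle \nabla S(\omega_{k,r}),\nabla f\rangle$ is again $O(k^{-2})$, so $D_1(f)$ equals the negative of the $k^{-1}$ coefficient of $\scD^*_{k,r}\scD_{k,r}(f)$. The pairing argument from Section \ref{subsec:general}, which compared $\scD^*_\omega\scD_\omega$ at $\omega_k$ and at $\omega_k + k^{-1}\ddc\phi$ using $\int_X \phi\, \scD^*_\omega\scD_\omega(\psi)\omega^n = \int_X \langle \scD_\omega\phi,\scD_\omega\psi\rangle_\omega \omega^n$, applies verbatim with $\omega_k$ replaced by $\omega_{k,r}$: the leading-order inner product $\langle \scD_{k,r}\phi,\scD_{k,r}\psi\rangle_{\omega_{k,r}}$ on fibrewise holomorphy potentials depends only on the vertical part of the metric to leading order, which is unaffected at order $O(1)$ by the higher-order perturbations. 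Hence after projection by $p$ this coefficient agrees with $\L_1$, giving $p \circ D_1 = -p \circ \L_1$ on $C^\infty_E(X)$.

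The principal obstacle I anticipate is the bookkeeping in part (ii): one must verify that every $k^{-2}$ correction arising from the base perturbation $\ddc f_2$ and the vertical perturbations $\ddc l_1,\ddc d_2$ either vanishes after fibre integration or assembles precisely into the formula for $\L_\alpha(f)$, with the Weil-Petersson identification of Proposition \ref{WP-rho} and the twisted cscK identity combining in exactly the right way. Parts (i) and (iii) then follow by direct application of the expansions and the invariance-under-perturbation arguments already developed in Section \ref{sec:approx}.
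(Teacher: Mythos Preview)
Your approach is essentially the one underlying the paper's citation to \cite{fibrations} and \cite{fine1} for parts (i) and (ii), and your treatment of part (iii) matches the paper's brief argument exactly (reduce to the $r=1$ case already handled in Section~\ref{subsec:general}, then observe that the leading vertical and horizontal parts of the metric are unchanged by the perturbations of the form considered). So the overall strategy is correct.

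There is, however, a genuine gap in your sketch of (ii). The claim that iterating $\Delta_{k,r} f = k^{-1}\Delta_B f + O(k^{-2})$ gives $\Delta_{k,r}^2 f = k^{-2}\Delta_B^2 f + O(k^{-3})$ is false as a pointwise statement. Writing $\Delta_{k,r} f = k^{-1}\Delta_B f + k^{-2}g + O(k^{-3})$, the function $g$ (which involves $(\omega_X)_{\scH}$ through the next term of the contraction expansion) is in general \emph{not} pulled back from $B$. Applying $\Delta_{k,r}$ again therefore picks up the leading \emph{vertical} Laplacian of $g$, giving an extra $k^{-2}\Delta_{\scV} g$ contribution to $\Delta_{k,r}^2 f$. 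Thus your displayed formula for $D_2(f)$ is missing terms pointwise. These extra terms do have vanishing fibre integral (since $\int_{X_b}\Delta_{\scV}g\,\omega_b^m = 0$), so the final identification can survive, but this must be argued explicitly. The same remark applies to the contributions of $l_1, d_2, l_2$: they too produce $k^{-2}$ terms in $D_2(f)$ pointwise whose fibre integrals must each be shown to vanish. You correctly flag this bookkeeping as the principal obstacle, but your sketch does not yet carry it out, and your intermediate pointwise formula for $D_2(f)$ should not be asserted as written.
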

\begin{proof} Parts (i) and (ii) are not new, and were considered in \cite[Proposition 5.6]{fibrations} building on \cite[Section 3.3]{fine1}. For (iii), we note that we already know the result for the case $r=1$. This was the proof after Theorem \ref{thm:mappingprops} above. We will now make a further perturbation to $\omega_k$. To highest order in the vertical direction, we still get the fibre metric $\omega_F$ (because the only order zero change is through the pullback of a function from $B$). Also, to highest order in the horizontal direction we still get $k \omega_B$, because in the horizontal direction the highest order change is $k \omega_B$ and no other terms in the expansion involves a positive power of $k$. Going through the proof of the expression of $p \circ \L_1$ on functions in $C^{\infty}_E(X)$ again, we see that the expressions only depend on these highest order components. Therefore, they will not change under the types of perturbations we are considering. 
\end{proof}

We will now construct approximate solutions $\omega_{k,r}$ to the cscK equation on $X$. We explain the first step before proceeding to the general case. Let $$\omega_{k,1} =k\omega_B + \omega_X + k^{-1} \ddc l_1$$ be the solution of the cscK equation to order $-1$, so that $$S(\omega_{k,1}) = c_0 + c_1 k^{-1} + k^{-2}(\psi_{B,2} + \psi_{E,2} + \psi_{R,2}) + O(k^{-3})$$ for $\psi_{B,2} \in C^{\infty} (B), \psi_{E,2} \in C^{\infty}_E (X)$ and $\psi_{R,2} \in C^{\infty}_R (X)$.  From Proposition \ref{prop:linearisation}, we know that $$ S(\omega_{k,1} + \ddc f_2 ) = S(\omega_{k,1}) + k^{-2} D_2 (f) + O(k^{-3}),$$ and so $$ S(\omega_{k,1} + \ddc f_2 ) = c_0 + c_1 k^{-1}  + k^{-2}( (\psi_{B,2} - \mathcal{L}_{\alpha} (f_2) ) + \psi'_{E,2} + \psi'_{R,2}) + O(k^{-3})$$ for some $\psi'_{E,2} \in C^{\infty}_E (X)$ and $\psi'_{R,2} \in C^{\infty}_R (X)$ depending on $f_2$, using that the horizontal component of $D_2(f_2)$ is $- \mathcal{L}_{\alpha} (f_2)$. Under our assumptions regarding the automorphisms of the map to the moduli space, $\mathcal{L}_{\alpha}$ is invertible modulo constants, and so there is a constant $c_{2}$ and choice of $f_2$ such that $\psi_{B,2} - \mathcal{L}_{\alpha} (f_2)  = c_{2}$. 

Next, we remove the error in the $E$-component. Proposition \ref{prop:linearisation} gives $$ S(\omega_{k,1} + \ddc f_2 + k^{-1} \ddc d_2 ) = c_0 + k^{-1} c_1 + k^{-2}( c_2 + (\psi'_{E,2} - p \circ \L_1 (\delta_2) ) + \psi''_{R,2}) + O(k^{-3}),$$ where $\psi''_{R,2} \in C^{\infty}_R (X)$ depends on $d_2$. Note since $\L_1$ is a vertical operator, we are not introducing a new error in the base direction to order $k^{-2}.$ Since there are no global automorphisms on $X$, we know from Theorem \ref{thm:mappingprops} that $p \circ \L_1$ is invertible on $E$, and so we can pick $d_2$ such that $\psi'_{E,2} = p \circ \L_1 (d_2)$.

It only remains to kill the $R$-component of the error. This can be achieved similarly as the vertical Lichnerowicz operator $ - \D^*_{\scV}\D_{\scV}$ restricts to an isomorphism on $R$. So we can find $l_2$ such that $$ S(\omega_{k,1} + \ddc f_2 + k^{-1} \ddc d_2  + k^{-2} \ddc l_2  ) = c_0 + c_1 k^{-1} + c_2 k^{-2} + O(k^{-3}),$$ which proves the $r=2$ case of the Theorem.

We are now ready to prove Theorem \ref{thm:cscKapprox}. 
\begin{proof} The proof is by induction, the base step being $r=1$ where we have already noted that the Theorem holds. Suppose $r \geq 1$ and that we have functions $$f_1, \cdots , f_r \in C^{\infty}(B), \qquad d_1,\hdots, d_{r} \in C^{\infty}_E(X),\qquad l_1, \cdots , l_r \in C^{\infty}_R (X)$$ and constants $c_0, \cdots, c_r$ such that if $$\phi_{k,r} = \sum_{j=1}^{r} f_j k^{2-j}, \quad \delta_{k,r} =  \sum_{j=1}^r d_j k^{1-j}, \quad \lambda_{k,r} = \sum_{j=1}^r l_j k^{-j}, \quad \hat S_{k,r} = \sum_{j=0}^r c_j k^{-j},$$ then $$\omega_{k,r} = k\omega_B + \omega_X + \ddb ( \phi_{k,r} +\delta_{k,r} +  \lambda_{k,r} )$$ satisfies $$S( \omega_{k,r})  = \hat S_{k,r}  + O(k^{-r-1}).$$ We begin by dividing the $O(k^{-r-1})$ error into its three components
$$S(\omega_{k,r}) = \hat S_{k,r} + k^{-r-1}(\psi_{B,r+1} + \psi_{E,r+1} + \psi_{R,r+1}) + O(k^{-r-2})$$ for $\psi_{B,r+1} \in C^{\infty} (B), \psi_{E,r+1} \in C^{\infty}_E (X)$ and $\psi_{R,r+1} \in C^{\infty}_R (X)$.  

We begin with removing the horizontal error. Using the mapping properties of the linearised operator on pulled back functions, we have from Proposition \ref{prop:linearisation} that $$ S(\omega_{k,r} + k^{-r+1} \ddc f_{r+1} ) = \hat S_{k,r} + k^{-r-1}\big( \psi_{B,r+1} - \mathcal{L}_{\alpha} (f_{r+1}) )+ \psi'_{E,r+1} + \psi'_{R,r+1}) + O(k^{-r-2})$$ for some $\psi'_{E,r+1} \in C^{\infty}_E (X)$ and $\psi'_{R,r+1} \in C^{\infty}_R (X)$ depending on $f_{r+1}$. Since $\mathcal{L}_{\alpha}$ is invertible modulo constants, there is a constant $c_{r+1}$ and choice of $f_{r+1}$ such that $\psi_{B,2} - \mathcal{L}_{\alpha} (f_2)  = c_{r}$. Thus $$ S(\omega_{k,r} + k^{-r+1} \ddc f_{r+1} ) = \hat S_{k,r+1} + k^{-r-1}\big( \psi'_{E,r+1} + \psi'_{R,r+1}) + O(k^{-r-2}),$$ where $\hat S_{k,r+1} = \hat S_{k,r} + c_{r+1} k^{-r-1}.$

For the $E$-component, Proposition \ref{prop:linearisation} gives us that \begin{align*}S\big(\omega_{k,r} +&  \ddc \big( k^{-r+1} f_{r+1}+ k^{-r} d_{r+1} \big) \big) \\ &= \hat S_{k,r+1} + k^{-r-1}\big( (\psi'_{E,r+1} - p \circ \L_1 (d_{r+1} ) ) + \psi''_{R,r+1} \big) + O(k^{-r-2}),\end{align*} for some $\psi''_{R,r+1} \in C^{\infty}_R (X)$ depending on $d_{r+1}$ and $f_{r+1}$. From Theorem \ref{thm:mappingprops} the operator $p \circ \L_1$ is invertible on $E$, and so we can pick $d_{r+1}$ such that $\psi'_{E,r+1} = p \circ \L_1 (d_{r+1})$. With this choice of $d_{r+1}$, we have $$ S\left(\omega_{k,r} + \ddc \left( k^{-r+1} f_{r+1}+ k^{-r} d_{r+1} \right) \right) = \hat S_{k,r+1} + k^{-r-1}  \psi''_{R,r+1} + O(k^{-r-2}).$$

Finally, since the vertical Lichnerowicz operator is invertible on $R$,  there is an $l_{r+1}$ such that $ - \D^*_{\scV}\D_{\scV} (l_{r+1}) = \psi''_{R,r+1}$, and so $$ S\bigg(\omega_{k,r} + \ddc \big( k^{-r+1} f_{r+1} + k^{-r} d_{r+1} +k^{-r} l_{r+1} \big) \bigg) = \hat S_{k,r+1} + O(k^{-r-2}),$$ as required.
\end{proof}

\subsection{The case of automorphisms}\label{sec:general-case}

We now allow both $q: (B,H) \to \M$ and $(X,H)$ to have automorphisms. In this case, in general we will produce extremal metrics rather than cscK metrics. The presence of automorphisms introduces two main new issues we now have to deal with. The first is that we have a twisted extremal metric on the base, rather than a twisted cscK metric. This is completely analogous to the situation in \cite{fibrations}. The second is that $(X,H)$ may now have global holomorphic vector fields that restrict to non-trivial holomorphic vector fields on each fibre. This will cause a change in the approximation procedure, when dealing with the vertical terms coming from the cokernel of the Lichnerowicz operator on each fibre. 

Since we are aiming to produce extremal metrics, we only need that the symplectic curvature term is the potential for a global holomorphic vector field on $X$, not that it vanishes. Recall that in Section \ref{sec:extremal-symplectic} we defined $\omega_X$ to be an extremal symplectic connection if the function $$p(\Delta_{\scV}(\Lambda_{\omega_B} \mu^*(F_{\scH})) + \Lambda_{\omega_B} \rho_{\scH})$$ is a global holomorphy potential on $X$ with respect to $\omega_k$ for all $k$. By the results we established in Section \ref{subsec:linearisation-special}, we immediately obtain the following:

\begin{lemma} $\omega_X$ is an extremal symplectic connection if and only if $$\mathcal{R} \circ p(\Delta_{\scV}(\Lambda_{\omega_B} \mu^*(F_{\scH})) + \Lambda_{\omega_B} \rho_{\scH})=0.$$ \end{lemma}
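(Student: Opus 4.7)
The function $F := p(\Delta_{\scV}(\Lambda_{\omega_B} \mu^*(F_{\scH})) + \Lambda_{\omega_B} \rho_{\scH})$ lies in $C^{\infty}_E(X)$ by construction, so $F$ is fibrewise a holomorphy potential of mean zero. By Definition \ref{def:extremalsymplectic}, $\omega_X$ is an extremal symplectic connection precisely when $\mathcal{D}_k F = 0$ for all $k$ sufficiently large, where $\mathcal{D}_k = \bar{\partial}\nabla_{\omega_k}^{1,0}$. The plan is to exploit the asymptotic expansion of $\mathcal{D}_k$ on fibrewise holomorphy potentials derived in Section \ref{subsec:linearisation-special}.

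Since the horizontal part of $\omega_k$ equals $kg_B + (g_X)_{\scH}$, its inverse scales as $O(k^{-1})$, while the vertical metric is $k$-independent. Together with the fibrewise holomorphy potential condition $\bar{\partial}_{\scV}(\nabla_{\scV}^{1,0} F) = 0$, this will yield the expansion
\begin{equation*}
\mathcal{D}_k F \;=\; \mathcal{R}(F) \;+\; O(k^{-1}),
\end{equation*}
where $\mathcal{R}(F) = \bar{\partial}_B(\nabla_{\scV}^{1,0} F)$ is $k$-independent. The forward direction is then immediate: if $\mathcal{D}_k F = 0$ for all $k$ large, matching the $k$-independent leading coefficient forces $\mathcal{R}(F) = 0$.

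For the converse, suppose $\mathcal{R}(F) = 0$. Then $\nu_E := \nabla_{\scV}^{1,0} F$ is a vertical global holomorphic vector field, which has a zero on each fibre (as $F$ has fibrewise mean zero) and therefore lies in $\Lie \Aut(X/B,H)$. I would then invoke Theorem \ref{thm:mappingprops}, whose content is precisely that $\ker \mathcal{R}$ on $C^{\infty}_E(X)$ is identified with those sections of $E$ arising as global holomorphy potentials on $X$. Using this identification, together with the fact that the vertical component of $\omega_k$ does not depend on $k$, one concludes that $F$ is a global holomorphy potential for every $\omega_k$, with associated vector field $\nu_E$ corrected by horizontal terms determined by $F$.

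The main obstacle will be the converse direction: verifying that once $\mathcal{R}(F) = 0$ holds, all the $O(k^{-1})$ correction terms in the expansion of $\mathcal{D}_k F$ vanish simultaneously, not merely the leading $k$-independent term. The expected mechanism is the block-diagonal structure of $\omega_k$, which decouples vertical and horizontal gradients, together with the intrinsic holomorphicity of $\nu_E$ forcing the horizontal correction $\nabla_{\scH,k}^{1,0} F$ to be $\bar{\partial}$-closed in a way controlled by $\mathcal{R}(F)$ alone.
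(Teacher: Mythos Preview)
Your approach is the paper's: both directions rest on the observation, recorded just before Theorem~\ref{thm:mappingprops}, that on $C^\infty_E(X)$ the kernel of $\mathcal{R}$ is exactly the set of global holomorphy potentials on $X$. Your forward direction via the expansion $\mathcal{D}_k F = \mathcal{R}(F) + O(k^{-1})$ makes explicit what the paper leaves implicit.

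The concern in your final paragraph is misplaced, and in fact you have already written down the correct converse argument two paragraphs earlier. Once $\mathcal{R}(F)=0$, the vertical field $\nu_E = \nabla_{\scV}^{1,0} F$ is globally holomorphic on $X$, and the sentence you cite from Section~\ref{subsec:linearisation-special} then says that $F$ is a global holomorphy potential; that is the whole converse. There is no need to analyse the subleading terms of $\mathcal{D}_k F$ separately. If you want a direct reason the condition is independent of $k$: since $\nu_E$ is vertical and $\pi^*\omega_B$ vanishes on vertical vectors, one has $\iota_{\nu_E}\omega_k = \iota_{\nu_E}\omega_X$ for every $k$, so the equation $i\bar\partial h = \iota_{\nu_E}\omega_k$ determining the holomorphy potential of $\nu_E$ does not depend on $k$. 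Thus the ``obstacle'' you flag dissolves once you use the contraction formulation rather than the gradient expansion.
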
 

Note below that even if we start with a solution to the optimal symplectic connection equation, and not just its extremal version, through our construction we may still end up with non-cscK extremal metrics on $X$. 

We briefly recall from Section \ref{sec:preliminaries} how to adapt the problem of finding a cscK metric (at the level of finding the appropriate K\"ahler potential) to finding an extremal metric. Recall that $\overline{\mathfrak{h}}$ denotes the space of holomorphy potentials with respect to a fixed K\"ahler metric $\omega$. Solving the extremal equation is then equivalent to finding a root of the map $ C^{k+4, \alpha} \times \overline{\mathfrak{h}} \rightarrow C^{k,\alpha}$ given by 
\begin{align*} (\phi, f ) \to S(\omega_{\phi}) - f -   \frac{1}{2} \langle \nabla f, \nabla \phi \rangle.
\end{align*} 
The  linearisation of this map at $(0,f)$ is $$ (\phi,  h) \to - \mathcal{D}^* \mathcal{D} (\phi) +  \frac{1}{2} \langle \nabla (S(\omega) - f), \nabla \phi \rangle -h.$$
In particular, if $\omega$ is extremal, we can linearise at $(0, S(\omega) )$ and the linearised map is then $$ (\phi, h ) \to - \mathcal{D}^* \mathcal{D} (\phi)  - h,$$ which is a surjective operator with kernel $\overline{\mathfrak{h}} \times \{ 0 \}.$ This is the underlying point of view which will allow us to deal with the seemingly troublesome looking extra terms appearing in the linearisation when producing the approximately extremal metrics below.

\begin{remark}\label{newtorus} In order to establish the extremal version of our main result, namely when the base admits a twisted extremal metric or $\omega_X$ is an extremal symplectic connection, it will be important to assume that $\omega_X$ satisfies two invariance properties. These are not necessary under the assumption that $\omega_X$ is an optimal symplectic connection and $\omega_B$ is a twisted cscK metric. We will use these assumptions to ensure the relevant linearised operators are real operators; in general they may be complex operators.

The first necessary assumption is that $\omega_X$ is invariant under the flow of $$\nu_E = J\nabla_{\scV}(p(\Delta_{\scV}(\Lambda_{\omega_B} \mu^*(F_{\scH})) + \Lambda_{\omega_B} \rho_{\scH})),$$ the vector field arising from the fact that $\omega_X$ is an extremal symplectic connection. We expect that this is always satisfied for extremal symplectic connections, much as extremal K\"ahler metrics are invariant under a maximal compact torus of automorphisms by Theorem \ref{thm:torus-invariance}. The second is that we must assume $\omega_X$ is invariant under the flow of a lift of the twisted extremal vector field $$\nu_q=J\nabla_B (S(\omega_B) - \Lambda_{\omega_B}q^*\Omega)$$ on $B$. In the setting of projective bundles, this invariance automatically follows by uniqueness results for (direct sums of) Hermite-Einstein metrics \cite[Lemma 4.9]{lu-seyyedali}, and hence we expect that this assumption holds automatically for any extremal symplectic connection. 

Thus what is most convenient for us is to fix a torus $T_q$ containing the flow of the twisted extremal vector field  on $B$, and a lift of $T_q$ to a subtorus of $\Aut(X,H)$. We fix also a torus $T_E$ containing the flow of $\nu_E$, and fix a torus $T \subset \Aut(X,H)$ containing both $T_E$ and the lift of $T_q$. It follows from these assumptions and Theorem \ref{isometriesoftwistedextremal} that $\omega_k = \omega_X + k\omega_B$ is a $T$-invariant K\"ahler metric for all $k \gg 0$. We remark again that this assumption is vacuous in the case when $\omega_B$ is twisted cscK and $\omega_X$ is an optimal symplectic connection.
 \end{remark}

It will be important to keep track of the various different holomorphy potentials occuring below. We will refer to the holomorphy potentials with respect to the base metric $\omega_B$ that lift to $X$  as \textit{base holomorphy potentials}, and the functions corresponding to sections of $E$ which produce holomorphy potentials on the whole of $X$ as \textit{global fibre holomorphy potentials}. All of our holomorphy potentials will be chosen to lie in the Lie algebra of the  complexification of $T$, which admits a splitting since we have chosen a lift of $T_q$ to a subtorus of $\Aut(X,H)$.  Note that every holomorphy potential on $X$ with respect to $\omega_k$ is then uniquely decomposed into a sum of a global fibre holomorphy potential and the lift of a base holomorphy potential, see Proposition \ref{automsofX}. For the holomorphic vector fields coming from the base, the holomorphy potentials with respect to the initial metric $\omega_k$ are related to, but not equal to the pullback of the corresponding base holomorphy potential. In fact, they are of the form $\tilde b_k = k \pi^* b + h_b$, for a fixed function $h_b$. Below we will let $\tilde b$ denote the lifted holomorphy potential of $b$, omitting the dependence on $k$. Note that terms $k^{-j-1} \tilde b$ are then $O(k^{-j})$, not $O(k^{-j-1})$.

We now state the main result of this section. 
\begin{theorem}\label{thm:extremalapprox} Let $\pi : (X,H) \to (B,L)$ be a smooth fibration. Suppose
\begin{enumerate}[(i)]
\item $\omega_X \in c_1(H)$ restricts to a cscK metric on $X_b$ for all $b\in B$;
\item letting $q: B \to \M$ be the moduli map, all elements of $\Aut(q)$ lift to $X$ and $\omega_B$ satisfies the twisted extremal equation  $$\mathcal{D}_{\omega_B} \big( S(\omega_B) - \Lambda_{\omega_B}\alpha \big) = 0$$  with $\alpha$ the the Weil-Petersson metric on $B$;
\item the automorphism groups of the fibres $(X_b,H_b)$ all have the same dimension, $\omega_X$ is $T$-invariant and satisfies the extremal symplectic connection equation $$\mathcal{R} \circ p(\Delta_{\scV}(\Lambda_{\omega_B} \mu^*(F_{\scH})) + \Lambda_{\omega_B} \rho_{\scH})=0.$$
\end{enumerate}Then for each integer $r \geq 1$ there exists functions $$f_1, \cdots , f_r \in C^{\infty}(B)^{T}, \qquad d_1,\hdots, d_{r} \in C^{\infty}_E(X)^T, \qquad l_1, \cdots , l_r \in C^{\infty}_R (X)^T,$$ base holomorphy potentials  $$b_1, \cdots, b_r \in C^{\infty} (B)^{T},$$  fibre holomorphy potentials $$ h_1, \cdots, h_r \in C^{\infty}_E(X)^T$$ and a constant $c$ such that if we let $$\phi_{k,r} = \sum_{j=1}^{r} f_j k^{2-j}, \quad \delta_{k,r} =  \sum_{j=1}^r d_j k^{1-j}, \quad \lambda_{k,r} = \sum_{j=1}^r l_j k^{-j},$$ and $$ \eta_{k,r} = c + \sum_{j=1}^r (\tilde b_j k^{-j-1}+ h_j k^{-j}),$$ then the K\"ahler form $$\omega_{k,r} = k\omega_B + \omega_X + \ddb ( \phi_{k,r} +\delta_{k,r} +  \lambda_{k,r} )$$ satisfies $$S( \omega_{k,r})  = \eta_{k,r} + \frac{1}{2} \langle \nabla \eta_{k,r}, \nabla (\phi_{k,r} +\delta_{k,r} +  \lambda_{k,r}) \rangle_{\omega_k} + O(k^{-r-1}).$$
\end{theorem}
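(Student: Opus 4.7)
The proof proceeds by induction on $r$, adapting the template of Theorem \ref{thm:cscKapprox} but now using the soul function $\eta_{k,r}$ to absorb the kernel obstructions of the relevant linearised operators at each stage. Throughout we restrict to $T$-invariant functions so that, by the invariance assumptions in Remark \ref{newtorus}, the linearised operators act as real operators. The base case takes $\omega_{k,0} = k\omega_B + \omega_X$ and $\eta_{k,0} = c$ equal to the leading topological constant from Corollary \ref{cor:scalarexpansion}, with the K\"ahler potential corrections vanishing; the residual error then lies in $O(k^{-1})$, as required. The underlying mechanism is the remark preceding the theorem: for an extremal reference, the linearisation of the extremal operator is precisely the negative Lichnerowicz operator plus projection onto $\bar{\mathfrak{h}}$, so inverting modulo holomorphy potentials kills the error while producing new contributions to $\eta_{k,r}$.

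For the inductive step, suppose $\omega_{k,r}$ and $\eta_{k,r}$ have been constructed. We write the extremal error as
$$S(\omega_{k,r}) - \eta_{k,r} - \frac{1}{2}\langle \nabla \eta_{k,r}, \nabla(\phi_{k,r} + \delta_{k,r} + \lambda_{k,r})\rangle_{\omega_{k,r}} = k^{-r-1}(\psi_{B,r+1} + \psi_{E,r+1} + \psi_{R,r+1}) + O(k^{-r-2}),$$
using the splitting of equation \eqref{eq:function-decomposition} restricted to $T$-invariant functions. We then cancel each component in turn. For $\psi_{B,r+1} \in C^\infty(B)^T$, Theorem \ref{fibrations-result} gives that $\mathcal{L}'_\Omega$ is surjective onto $C^\infty(B,\R)^T$, so we find $f_{r+1}$ and $b_{r+1} \in \bar{\mathfrak{p}}^T$ with $\mathcal{L}_\Omega(f_{r+1}) + b_{r+1} = \psi_{B,r+1}$; we record $\tilde b_{r+1} k^{-r-2}$ as the new contribution to $\eta_{k,r+1}$, noting that its leading piece $\pi^*b_{r+1}\,k^{-r-1}$ exactly matches the obstruction. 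For $\psi_{E,r+1} \in C^\infty_E(X)^T$ we invoke Theorem \ref{thm:mappingprops} and the extension of Proposition \ref{prop:linearisation}(iii) to the extremal setting: the operator $p \circ \mathcal{L}_1$ on $T$-invariant sections of $E$ is elliptic and self-adjoint with kernel the global fibre holomorphy potentials, so we find $d_{r+1}$ and an obstruction $h_{r+1}$ in this kernel, absorbed into $\eta_{k,r+1}$ as $h_{r+1} k^{-r-1}$. Finally, $\psi_{R,r+1}$ is killed by inverting the fibrewise Lichnerowicz operator $\mathcal{D}_{\scV}^*\mathcal{D}_{\scV}$, which is an isomorphism on $C^\infty_R(X)^T$ by fibrewise ellipticity.

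The main technical point, and the place where extra care is needed relative to the cscK setting, is the bookkeeping of the cross terms $\frac{1}{2}\langle \nabla \eta_{k,r+1}, \nabla \phi\rangle_{\omega_k}$ that appear once we expand $\eta_{k,r+1}$ in the full extremal equation. Because $\eta_{k,r} = c + O(k^{-1})$ and the gradient is taken with respect to $\omega_k$, which rescales vertical and horizontal directions differently, one checks that $\nabla \eta_{k,r}$ is $O(k^{-1})$ and that this term is only first order in the new potential, so it does not affect the symbols of the three operators used above and its contributions can be tracked at strictly lower order in $k$ at each step. In particular, the proof of Proposition \ref{prop:linearisation} continues to apply, and the extra Lie derivative cross terms are harmless perturbations of what is already computed there. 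The main obstacle I therefore expect is neither the inversion at each level nor the ellipticity, but rather the combinatorial accounting that, after all cancellations, the newly introduced terms (with $\tilde b_j = k\pi^*b_j + h_b$ having a built-in $k$-factor) fit precisely into the stated form of $\eta_{k,r+1}$ and do not regenerate uncontrollable errors at orders already solved.
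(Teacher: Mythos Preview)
Your approach is essentially the same as the paper's: induction on $r$, decomposing the error via \eqref{eq:function-decomposition}, and inverting $\mathcal{L}_\alpha$, $p\circ\mathcal{L}_1$, and $\mathcal{D}_{\scV}^*\mathcal{D}_{\scV}$ in that order, with the kernel obstructions absorbed into $\eta_{k,r+1}$. Two points deserve tightening.

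First, the inner product in the extremal error should be taken with respect to $\omega_k$, not $\omega_{k,r}$, to match the statement; this matters because the paper's bookkeeping repeatedly uses that $\langle\cdot,\cdot\rangle_{\omega_k}$ has a clean $k$-grading on horizontal versus vertical components.

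Second, and more substantively, your description of the cross terms as ``harmless perturbations'' tracked ``at strictly lower order'' is not quite right, and this is the one place the paper does real work beyond Theorem~\ref{thm:cscKapprox}. Because $\omega_{k,r}$ is only extremal (not cscK) to order $k^{-1}$, the linearisation of the scalar curvature picks up extra terms at the \emph{same} order as the operators you are inverting: specifically $\tfrac{1}{2}\langle\nabla_{\scV} h_1,\nabla_{\scV}\,\cdot\,\rangle_{\omega_F}$ is added to $D_1$ on $C^\infty_E$, and $\tfrac{1}{2}\langle\nabla b_1,\nabla\,\cdot\,\rangle_{\omega_B}$ to $D_2$ on $C^\infty(B)$. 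These are not lower-order errors to be discarded; they are exactly the terms that update $\tfrac{1}{2}\langle\nabla\eta_{k,r},\nabla(\phi_{k,r}+\cdots)\rangle_{\omega_k}$ to $\tfrac{1}{2}\langle\nabla\eta_{k,r+1},\nabla(\phi_{k,r+1}+\cdots)\rangle_{\omega_k}$ when you replace $\phi_{k,r}$ by $\phi_{k,r+1}$, etc. The paper verifies this identity explicitly, e.g.\ $k^{-r-1}\langle\nabla b_1,\nabla f_{r+1}\rangle_{\omega_B}=\langle\nabla k^{-2}\tilde b_1,\nabla k^{-r+1}f_{r+1}\rangle_{\omega_k}+O(k^{-r-2})$, and similarly for the $h_1$ term. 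You have correctly identified this matching as the ``main obstacle'' in your final paragraph, but your earlier claim that the contributions sit at strictly lower order contradicts it; the proof only closes because these terms land at precisely order $k^{-r-1}$ and are absorbed structurally, not estimated away. A minor further point: the paper's base case is $r=1$ with $f_1=d_1=0$, establishing $b_1$ and $h_1$ directly from the twisted extremal and extremal symplectic connection hypotheses, rather than treating them as produced by a first inductive step from $r=0$.
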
 

Here $T$ is the compact torus described in Remark \ref{newtorus}; in particular if $\omega_X$ is an optimal symplectic connection and $\omega_B$ is twisted cscK, no invariance assumption is needed and $T$ can be taken to be trivial.

The strategy of the proof is the same as for Theorem \ref{thm:cscKapprox}, with some additional complications due to the presence of the automorphisms. We begin by explaining the step $r=1$, then explain how this affects the linearisation of the scalar curvature operator. Once this is in place we will be able to follow the same steps as before to complete the proof, with some additional care to deal with the extra terms coming from the change in holomorphy potentials.

Recall from Corollary \ref{cor:scalarexpansion} that the scalar curvature of $\omega_k$ has an expansion
$$S(\omega_k) = S(\omega_b) + k^{-1}\big( S(\omega_B) + \Lambda_{\omega_B} \rho_{\scH} + \Delta_{\scV} (\Lambda_{\omega_B}(\omega_X)_{\scH})\big)  + O(k^{-2}).$$  We know $S(\omega_b)$ is a constant, which will be denoted $c$, and that $ S(\omega_B) - \Lambda_{\omega_B} \alpha $ is a holomorphy potential on $B$ with respect to $\omega_B$, which will be the term $b_1$ (recall from the proof of Proposition \ref{prop:firstapprox} that $- \Lambda_{\omega_B} \alpha$ is the base component of $\Lambda_{\omega_B} \rho_{\scH}$ in the decomposition of functions on $X$). Finally, since $$\mathcal{R} \circ p(\Delta_{\scV}(\Lambda_{\omega_B} \mu^*(F_{\scH})) + \Lambda_{\omega_B} \rho_{\scH}) =0$$ we have that  \begin{equation}\label{h1}\Delta_{\scV}(\Lambda_{\omega_B} \mu^*(F_{\scH})) + \Lambda_{\omega_B} \rho_{\scH}) = h_1 + \psi_{R,1}\end{equation} for a global holomorphy potential $h_1 \in C^{\infty}_E (X)$ and some function $ \psi_{R,1} \in C^{\infty}_R (X)$. Thus we have $$S(\omega_k) = c + k^{-1}\big( b_1 + h_1 + \psi_{R,1} \big)  + O(k^{-2}).$$ Since the linearisation of the scalar curvature of $\omega_k$ is $ - \mathcal{D}^*_{\scV} \mathcal{D}_{\scV} + O(k^{-1})$ and $\mathcal{D}^*_{\scV} \mathcal{D}_{\scV} $ is an isomorphism on the $R$-component,  there is an $l_1 \in  C^{\infty}_R (X)$ such that $ \mathcal{D}^*_{\scV} \mathcal{D}_{\scV}(l_1 ) = - \psi_{R,1}$, and so  $$S(\omega_k + k^{-1} \ddb l_1 ) = c+ k^{-2} \tilde  b_1 + k^{-1} h_1   + O(k^{-2}),$$ which solves the extremal equation to order $-1$. We emphasise again that since $\tilde b_1 = k b_1 + O(1)$, $k^{-2} \tilde  b_1 $ is in fact an $O(k^{-1})$ term. Note also that we are using here that the term  $ \langle \nabla (k^{-2} \tilde b_1 + k^{-1} h_1) , \nabla k^{-1} l_1 \rangle_{\omega_k}$ is $O(k^{-2})$ and so to leading order, the potential for the holomorphic vector field appearing in the $k^{-1}$-term has not changed under the perturbation of $\omega_k$. So $$ \omega_{k,1} = \omega_k + k^{-1} \ddb l_1 $$ is the $r=1$ case of Theorem \ref{thm:extremalapprox}. 

Next we want to discuss the mapping properties of the linearisation of the scalar curvature at $\omega_{k,1}$. We note that in the proof of Proposition \ref{prop:linearisation}, we considered the Lichnerowicz operator and deduced the corresponding properties of the linearised operator because the metric was cscK to order $-1$. Now our metric is cscK to order $0$ and in general only extremal to order $-1$. Therefore the linearisation of $$\phi \mapsto  S(\omega_{k,1} + \ddc \phi )$$ will now have an extra term $$ k^{-1} \frac{1}{2} \langle \nabla \big( b_1 + h_1 \big) , \nabla \phi \rangle_{\omega_{k,1}}$$ affecting the key operators in the approximation procedure. This comes from the term $$ \frac{1}{2} \langle \nabla S(\omega_{k,1}), \nabla \phi \rangle_{\omega_{k,1}}$$ of the linearisation of the scalar curvature operator, and that $k^{-2} \tilde b_1 = k^{-1} b_1 + O(k^{-2}).$ In the notation of Proposition \ref{prop:linearisation}, this has the effect that it
\begin{enumerate}[(i)]
\item leaves $D_0$ unchanged;
\item adds an additional term $ \frac{1}{2} \langle \nabla_{\scV}  h_1  , \nabla_{\scV} \phi \rangle_{\omega_F}$ to $D_1$ on $C^{\infty}_E (X)$;
\item adds an additional term $\frac{1}{2} \langle \nabla b_1 , \nabla \phi \rangle_{\omega_B} $ to $D_2$ on $C^{\infty} (B)$.
\end{enumerate}
Note that just as in Proposition \ref{prop:linearisation}, these mapping properties remain unchanged upon any further perturbation of $\omega_{k,1}$ of the form given in Theorem \ref{thm:extremalapprox}.

We are almost read to prove Theorem \ref{thm:extremalapprox}. We first require the following.

\begin{lemma}\label{extremal-real} At an extremal symplectic connection invariant under the torus $T$ described in Remark \ref{newtorus}, the linearisation of the extremal symplectic connection operator is a real operator on $T$-invariant functions. \end{lemma}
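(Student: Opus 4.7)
The plan is to adapt the reality argument used for the Lichnerowicz operator at an extremal K\"ahler metric (cf.\ the remark following Theorem \ref{thm:torus-invariance}) and for the twisted extremal operator in Theorem \ref{fibrations-result}. Following that pattern, I would first parameterise the extremal symplectic connection equation by an auxiliary holomorphy potential, writing it as
$$\mathcal{E}(\phi,h) \;=\; p\bigl(\Delta_{\scV,\phi}(\Lambda_{\omega_B}\mu^*_{\phi}(F_{\scH,\phi})) + \Lambda_{\omega_B}\rho_{\scH,\phi}\bigr) \,-\, h \,-\, \tfrac{1}{2}\langle \nabla_{\scV} h, \nabla_{\scV}\phi\rangle_{\omega_{X,\phi}},$$
with $h$ ranging over the global $E$-valued holomorphy potentials coming from $\Lie\Aut(X/B,H)$, and where the subscript $\phi$ denotes the corresponding object computed with respect to $\omega_X+\ddc\phi$. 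At $(0, h_0)$, with $h_0 = p(\Delta_{\scV}(\Lambda_{\omega_B}\mu^*(F_{\scH})) + \Lambda_{\omega_B} \rho_{\scH})$, the map $\mathcal{E}$ vanishes precisely when $\omega_X$ is an extremal symplectic connection.

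Differentiating at $(0,h_0)$, the $\phi$-variation of $p(\ldots)_{\omega_{X,\phi}}$ splits as a principal $\mathbb{C}$-linear operator $\mathcal{P}$ plus a first-order drift $\tfrac{1}{2}\langle \nabla_{\scV} h_0, \nabla_{\scV}\phi\rangle$, in exact analogy with the decomposition $\frac{d}{dt}S(\omega_t)|_{t=0} = -\mathcal{D}^*\mathcal{D}\phi + \tfrac{1}{2}\langle \nabla S(\omega),\nabla\phi\rangle$ from the scalar curvature setting. The drift is cancelled by the corresponding term built into $\mathcal{E}$, so the total linearisation reduces to $(\phi,h') \mapsto \mathcal{P}(\phi) - h'$. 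By the computations of Section \ref{subsec:linearisation-special}, the leading behaviour of $\mathcal{P}$ on $C^\infty_E(X)$ is governed by the operator $p\circ\mathcal{L}_1$ of Theorem \ref{thm:mappingprops}, and all pieces of its expansion are built from real geometric data (fibre metrics, the Ricci-type form $\rho$, and horizontal contractions against $\omega_B$), so they take real inputs to real outputs.

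The remaining step, which I expect to be the main obstacle, is ensuring that the lower-order contributions to $\mathcal{P}$, coming from variations of the comoment map $\mu^*$, of $\rho_{\scH}$, and of the vertical Laplacian and of the projection $p$, are again real on $T$-invariant functions. The key observation is that, after using the identities $h_0 = p(\Delta_{\scV}(\Lambda_{\omega_B}\mu^*(F_{\scH})) + \Lambda_{\omega_B}\rho_{\scH})$ and $S(\omega_B) - \Lambda_{\omega_B}\alpha \in \bar\mfp$, each such piece reduces to an inner product of the form $\tfrac{1}{2}\langle \nabla_{\scV} h_0,\nabla_{\scV}\phi\rangle$ or its horizontal counterpart involving $\nabla_B(S(\omega_B) - \Lambda_{\omega_B}\alpha)$. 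By the invariance of $\omega_X$ under the flow of $\nu_E$ (and, in the twisted extremal setting, the lift of $\nu_q$) required in Remark \ref{newtorus}, both potentials are $T$-invariant, and so these inner products are simply the action of the real holomorphic vector fields $\nu_E, \nu_q$ on real $T$-invariant functions, hence real-valued; this is the fibrewise and horizontal analogue of the reality argument of \cite[p.~16]{arezzo-pacard-singer}. It follows that $\mathcal{P}$ maps $C^{\infty}(X,\R)^T$ to $C^{\infty}(X,\R)^T$, giving the Lemma. The bookkeeping in isolating the drift terms and checking that no additional imaginary pieces arise is the only nontrivial part of the argument.
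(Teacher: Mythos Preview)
Your strategy is reasonable in spirit but incomplete as written, and it is considerably more laborious than the paper's argument. You propose to differentiate each ingredient of the optimal symplectic connection formula (the comoment map, $\rho_{\scH}$, $\Delta_{\scV}$, the projection $p$) and then argue term-by-term that the variations are real on $T$-invariant functions. You acknowledge that this last step is ``the main obstacle'' and offer only an expected reduction to drift terms; that reduction is asserted by analogy rather than proved, and checking it directly would require several explicit variational computations that you have not carried out.

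The paper bypasses all of this by exploiting the adiabatic framework already set up in Sections \ref{subsec:linearisation-special}--\ref{subsec:general}. The point is that the linearisation of the extremal symplectic connection operator on $C^{\infty}_E(X)$ has already been identified with $p\circ\L_1$, the $k^{-1}$ coefficient in the expansion of the Lichnerowicz operator $\scD_k^*\scD_k$ of the genuine K\"ahler metric $\omega_k$. One therefore does not need to differentiate the geometric pieces of the optimal symplectic connection formula at all: it suffices to locate the imaginary part of $\scD_k^*\scD_k$ and read off its $k^{-1}$ contribution after projection. The imaginary part of the full Lichnerowicz operator is $\phi\mapsto iJ\nabla_k(S(\omega_k))(\phi)$; since $S(\omega_k)=c_0+k^{-1}(b_1+h_1+\psi_{R,1})+O(k^{-2})$ with $h_1$ the global fibre holomorphy potential of the extremal symplectic connection, the $k^{-1}$ contribution on $C^{\infty}_E$ after applying $p$ is exactly $(iJ\nabla_{\scV}h_1)(\phi)$. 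As $J\nabla_{\scV}h_1=\nu_E\in\Lie(T)$, this vanishes on $T$-invariant $\phi$, and the lemma follows immediately. What your approach would buy, if completed, is an argument intrinsic to the fibration that does not pass through the parameter $k$; what the paper's route buys is brevity, since all the hard work has already been packaged into the identification of $p\circ\L_1$.
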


\begin{proof} On a general K\"ahler manifold $(X,\omega)$, the Lichnerowicz operator $\scD^*\scD$ takes the form $$\scD^*\scD(\phi) =(\Delta^2 \phi + R^{\bar k j} \nabla_j \nabla_{\bar k} \phi  -  \langle i \partial \overline{\partial} \phi, \alpha \rangle) +\nabla (S(\omega))(\phi)+iJ\nabla (S(\omega))(\phi).$$ The reason this is not a real operator in general is due to the presence of the $i(\nabla S(\omega))(\phi)$ term, but if $\phi$ is taken to be invariant under the flow of $\nabla S(\omega)$, this purely imaginary term vanishes and the operator is real.

Applying this to our situation, we have a K\"ahler metric $\omega_k$ which is extremal to order $k^{-2}$. Note that the projection operator $p$ is a real operator. From what we have established above, we see that if we temporarily denote the linearisation of the purely imaginary operator for $\phi \in C^{\infty}_E(X,\R)$ by $$p(iJ\nabla_k (S(\omega_k + t\ddb \phi)) = p(iJ\nabla_k (S(\omega_k))) + p(tiJ\G(\phi)) + O(t^2),$$ then $$p(iJ\G(\phi)) = (iJ\nabla_{\scV} h_1)(\phi), $$ where $h_1$ is the global holomorphy potential constructed in Equation \eqref{h1}. Then $J\nabla_{\scV}h_1$ lies in $\Lie(T)$, and when $\phi \in C^{\infty}_E(X,\R)^{T}$ is  a $T$-invariant function, we have $(J\nabla_{\scV}h_1)(\phi) = 0$. Hence the operator is real on this function space.
\end{proof}

With this in place, we now prove Theorem \ref{thm:extremalapprox}. 
\begin{proof}[Proof of Theorem \ref{thm:extremalapprox}] From Remark \ref{newtorus}, by our assumptions $\omega_X$ and $\pi^*\omega_B$ are invariant under the compact torus $T$ we have constructed. This firstly ensures both that the linearisation of the extremal symplectic connection operator is a real operator by Lemma \ref{extremal-real}, and that the data produced will be torus invariant. Similarly $\omega_B$ is $T_q$-invariant, hence the linearised twisted extremal operator is real and the produced functions will be torus invariant by Theorem \ref{fibrations-result}. For notational convenience we drop the various superscripts which would be used to denote invariance as this plays no further role beyond from what we have just described.

We have completed the base step and now prove the inductive step. Suppose $r \geq 1$ and that we have functions $$f_1, \cdots , f_r \in C^{\infty}(B), \qquad d_1,\hdots, d_{r} \in C^{\infty}_E(X),\qquad l_1, \cdots , l_r \in C^{\infty}_R (X),$$ base holomorphy potentials  $$b_1, \cdots, b_r \in C^{\infty} (B)$$ and fibre holomorphy potentials $$h_1, \cdots, h_r \in C^{\infty}_E(X)$$ such that if we let $$\phi_{k,r} = \sum_{j=1}^{r} f_j k^{2-j}, \quad \delta_{k,r} =  \sum_{j=1}^r d_j k^{1-j}, \quad \lambda_{k,r} = \sum_{j=1}^r l_j k^{-j},$$ and $$ \eta_{k,r} = c + \sum_{j=1}^r (b_j k^{-j-1} + h_j k^{-j}),$$ then the K\"ahler form $$\omega_{k,r} = k\omega_B + \omega_X + \ddb ( \phi_{k,r} +\delta_{k,r} +  \lambda_{k,r} )$$ satisfies $$S( \omega_{k,r})  = \eta_{k,r} + \frac{1}{2} \langle \nabla \eta_{k,r}, \nabla (\phi_{k,r} +\delta_{k,r} +  \lambda_{k,r}) \rangle_{\omega_k} + O(k^{-r-1}).$$

The $O(k^{-r-1})$ error has three components
\begin{align*} S(\omega_{k,r}) =& \eta_{k,r} + \frac{1}{2} \langle \nabla \eta_{k,r}, \nabla (\phi_{k,r} +\delta_{k,r} +  \lambda_{k,r}) \rangle_{\omega_k}\\
&+ k^{-r-1}(\psi_{B,r+1} + \psi_{E,r+1} + \psi_{R,r+1}) + O(k^{-r-2})
\end{align*}
for $\psi_{B,r+1} \in C^{\infty} (B), \psi_{E,r+1} \in C^{\infty}_E (X)$ and $\psi_{R,r+1} \in C^{\infty}_R (X)$.   We begin by removing the horizontal error. Using the linearised operator on pulled back functions, we have
\begin{align*}  S(\omega_{k,r} + k^{-r+1} \ddc f_{r+1} ) =& \eta_{k,r} + \frac{1}{2} \langle \nabla \eta_{k,r}, \nabla (\phi_{k,r} +\delta_{k,r} +  \lambda_{k,r}) \rangle_{\omega_k}\\
&+ k^{-r-1}\big( \psi_{B,r+1} - \mathcal{L}_{\alpha} (f_{r+1}) +   \frac{1}{2} \langle \nabla b_1, \nabla f_{r+1} \rangle \big)\\
&+ k^{-r-1}( \psi'_{E,r+1} + \psi'_{R,r+1}) + O(k^{-r-2})
\end{align*} for some $\psi'_{E,r+1} \in C^{\infty}_E (X)$ and $\psi'_{R,r+1} \in C^{\infty}_R (X)$ depending on $f_{r+1}$. Since $\mathcal{L}_{\alpha}$ is invertible modulo holomorphy potentials on $B$, there is a base holomorphy potential $b_{r+1}$ and choice of $f_{r+1}$ such that $\psi_{B,r+1} - \mathcal{L}_{\alpha} (f_{r+1})  = b_{r+1}$. Thus with this choice of $f_{r+1}$ we have
\begin{align*}  S(\omega_{k,r} + k^{-r+1} \ddc f_{r+1} ) =& \eta_{k,r} + \frac{1}{2} \langle \nabla \eta_{k,r}, \nabla (\phi_{k,r} +\delta_{k,r} +  \lambda_{k,r}) \rangle_{\omega_k}\\
&+ k^{-r-1}\big( b_{r+1} +   \frac{1}{2} \langle \nabla b_1, \nabla f_{r+1} \rangle_{\omega_B} \big)\\
&+ k^{-r-1}( \psi'_{E,r+1} + \psi'_{R,r+1}) + O(k^{-r-2})
\end{align*} 

We now note that since $b_{r+1}$ is pulled back from $B$, we have $$\langle \nabla k^{-r-1} b_{r+1}  , \nabla  (\phi_{k,r} +\delta_{k,r} +  \lambda_{k,r})  \rangle_{\omega_k} = O(k^{-r-2}),$$ and so we obtain $$\langle \nabla k^{-r-2} \tilde b_{r+1}  , \nabla  (\phi_{k,r} +\delta_{k,r} +  \lambda_{k,r})  \rangle_{\omega_k} = O(k^{-r-2}),$$ from the properties of the lift. The same holds when $\phi_{k,r} $ is replaced by $\phi_{k,r+1} = \phi_{k,r} + k^{-r+1} f_{r+1}.$ Moreover, we also have that 
\begin{align*} k^{-r-1} \langle \nabla b_1, \nabla f_{r+1} \rangle_{\omega_B} &= \langle \nabla k^{-1} b_1, \nabla k^{-r+1} f_{r+1}  \rangle_{\omega_k} + O(k^{-r-2}) \\
&= \langle \nabla k^{-2} \tilde b_1, \nabla k^{-r+1} f_{r+1}  \rangle_{\omega_k} + O(k^{-r-2}).
\end{align*}The upshot is that $$\frac{1}{2} \langle \nabla \eta_{k,r}, \nabla (\phi_{k,r} +\delta_{k,r} +  \lambda_{k,r}) \rangle_{\omega_k} + k^{-r-1}\big( b_{r+1} +   \frac{1}{2} \langle \nabla b_1, \nabla f_{r+1} \rangle \big)$$ equals $$\frac{1}{2} \langle \nabla \eta'_{k,r+1}, \nabla (\phi_{k,r+1} +\delta_{k,r} +  \lambda_{k,r}) \rangle_{\omega_k}$$ up to order $k^{-r-2},$ where $\eta'_{k,r+1} = \eta_{k,r} + k^{-r-2} \tilde b_{r+1}.$

Letting $\omega'_{k,r+1} = \omega_{k,r} + k^{-r+1} \ddc f_{r+1}$, we so far have that 
\begin{align*}  S( \omega_{k,r+1}'  ) =& \eta'_{k,r+1} + \frac{1}{2} \langle \nabla \eta'_{k,r+1}, \nabla (\phi_{k,r+1} +\delta_{k,r} +  \lambda_{k,r}) \rangle_{\omega_k}\\
&+ k^{-r-1}( \psi'_{E,r+1} + \psi'_{R,r+1}) + O(k^{-r-2}).
\end{align*}
We now remove the error in the $E$-component. From the linearisation, we obtain
\begin{align*}S\big( \omega'_{k,r+1} + k^{-r}  \ddc (\delta_{r+1} )\big)  =& \eta'_{k,r+1}+ \frac{1}{2} \langle \nabla \eta'_{k,r+1}, \nabla (\phi_{k,r+1} +\delta_{k,r} +  \lambda_{k,r}) \rangle_{\omega_k} \\
&+ k^{-r-1}\big( \psi'_{E,r+1} - p \circ \L_1 (d_{r+1} ) + \frac{1}{2} \langle \nabla_{\scV}  h_1  , \nabla_{\scV} d_{r+1} \rangle_{\omega_F} )  \big) \\
& +  k^{-r-1}\big( \psi''_{R,r+1} \big) + O(k^{-r-2}),\end{align*} for some $\psi''_{R,r+1} \in C^{\infty}_R (X)$ depending on $d_{r+1}$ and $f_{r+1}$.   

From Theorem \ref{thm:mappingprops}, the operator $p \circ \L_1$ is invertible on $E$ modulo global fibrewise holomorphy potentials, and so we can pick $d_{r+1}$ and a global holomorphy potential $h_{r+1}$ such that $\psi'_{E,r+1} = p \circ \L_1 (d_{r+1}) + h_{r+1}$. With this choice of $d_{r+1}$, we have 
\begin{align*}S\big(\omega'_{k,r+1} + k^{-r}  \ddc (d_{r+1} )\big)  =& \eta'_{k,r+1}+ \frac{1}{2} \langle \nabla \eta'_{k,r+1}, \nabla (\phi_{k,r+1} +d_{k,r} +  \lambda_{k,r}) \rangle_{\omega_k} \\
&+ k^{-r-1}\big( h_{r+1} + \frac{1}{2} \langle \nabla_{\scV}  h_1  , \nabla_{\scV} d_{r+1} \rangle_{\omega_F} )  \big) \\
& +  k^{-r-1}\big( \psi''_{R,r+1} \big) + O(k^{-r-2}),\end{align*}

We now proceed similarly to the case of the base component to show that this is actually of the required form, up to the $\psi''_{R,r+1}$ error. We have that $\delta_{k,r}+  \lambda_{k,r}$ is $O(k^{-1})$, and so $\langle \nabla k^{-r-1} h_{r+1} , \nabla (\delta_{k,r}+  \lambda_{k,r}) \rangle_{\omega_k}$ is $O(k^{-r-2}).$  While $\phi_{k,r+1}$ is only $O(1)$, it is pulled back from the base, so when taking the inner product we have that $\langle \nabla k^{-r-1} h_{r+1} , \nabla\phi_{k,r+1} \rangle_{\omega_k}$ is $O(k^{-r-2})$,  too. Similarily when adding $k^{-r} d_{r+1}$ to $\delta_{k,r}$ to form $\delta_{k, r+1}.$  Moreover, we also have that 
\begin{align*}k^{-r-1}  \langle \nabla_{\scV}  h_1  , \nabla_{\scV} d_{r+1} \rangle_{\omega_F} &= \langle \nabla k^{-1} h_1, \nabla k^{-r} d_{r+1}  \rangle_{\omega_k} + O(k^{-r-2}) ,
\end{align*}
and so we obtain that $$\frac{1}{2} \langle \nabla \eta_{k,r}', \nabla (\phi_{k,r} +\delta_{k,r} +  \lambda_{k,r}) \rangle_{\omega_k} + k^{-r-1}\big( h_{r+1} +   \frac{1}{2} \langle \nabla_{\scV}  h_1  , \nabla_{\scV} d_{r+1} \rangle_{\omega_F}  \big)$$ equals $$\frac{1}{2} \langle \nabla \eta_{k,r+1}, \nabla (\phi_{k,r+1} +\delta_{k,r+1} +  \lambda_{k,r}) \rangle_{\omega_k}$$ up to order $k^{-r-2},$ where $\eta_{k,r+1} = \eta_{k,r+1}' + k^{-r-1} h_{r+1}$.

All that is left is the error coming from $\psi''_{R,r+1}$, since we have shown that 
\begin{align*}  S\big( \omega''_{k,r}  \big) =& \eta_{k,r+1} + \frac{1}{2} \langle \nabla \eta_{k,r+1}, \nabla (\phi_{k,r+1} +\delta_{k,r+1} +  \lambda_{k,r}) \rangle_{\omega_k} \\
&+ k^{-r-1}\psi''_{R,r+1} + O(k^{-r-2}),
\end{align*}
where $ \omega''_{k,r} =\omega_{k,r} + \ddc ( k^{-r+1} f_{r+1} + k^{-r} d_{r+1} ).$ 
Since the vertical Lichnerowicz operator is invertible on $R$, there is an $l_{r+1}$ such that $ - \D^*_{\scV}\D_{\scV} (l_{r+1}) = \psi''_{R,r+1}$, which removes $\psi''_{R,r+1}$. The additional term coming from the inner product does not matter here as $\langle \nabla \eta_{k, r+1} , \nabla k^{-r-1} l_{r+1} \rangle_{\omega_k}$ is $O(k^{-r-2}).$
\end{proof}

\begin{remark} There is an error in the analogous approximation scheme used by Br\"onnle \cite[Section 3]{bronnle}. He corrects the $C^{\infty}(B)$ term last, but this may introduce a new $C^{\infty}_0(X)$ error at the same order of $k$, which would be problematic. The order we use seems to be the unique order which makes the argument work. \end{remark}

\section{Solving the non-linear equation}\label{sec:implicit}

We now perturb our approximately extremal metric constructed in Section \ref{sec:approx} to a genuine extremal metric. The techniques we employ are very similar to the previous work \cite{fine1,bronnle,fibrations}; we briefly give the details as there are some minor differences related to the geometry of our situation.

\begin{theorem}\label{endmainthm}Suppose $q: (B,L)\to \M$ admits a twisted extremal metric $\omega_B$ and $\pi: (X,H) \to (B,L)$ admits an extremal symplectic connection $\omega_X$. Suppose in addition that all automorphisms of the moduli map $q: (B,L) \to \M$ lift to $(X,H)$. Then there exists an extremal metric in the class $kc_1(L)+c_1(H)$ for all $k \gg 0$. \end{theorem}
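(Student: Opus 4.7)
The plan is to apply a quantitative inverse function theorem to the nonlinear extremal operator, using the approximate extremal metric $\omega_{k,r}$ produced by Theorem \ref{thm:extremalapprox} as initial guess, following the general strategy of \cite{hong1,fine1,bronnle,lu-seyyedali,fibrations}. Fix an integer $r$ to be chosen sufficiently large and let $T$ be the torus from Remark \ref{newtorus}. Define
$$\F_k(\phi, h) = S(\omega_{k,r} + \ddc\phi) - h - \tfrac{1}{2}\langle \nabla h, \nabla \phi\rangle_{\omega_{k,r}+\ddc\phi}$$
acting on pairs of a $T$-invariant K\"ahler potential $\phi$ and a $T$-invariant holomorphy potential $h$ on $(X,H)$ with respect to $\omega_{k,r}$, so that a zero of $\F_k$ produces a genuine extremal metric in $c_1(kL+H)$. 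The $T$-invariance ensures $\F_k$ is a real operator. Theorem \ref{thm:extremalapprox} gives that $\F_k(0,\eta_{k,r})$ is pointwise of size $O(k^{-r-1})$, and by a patching argument as in \cite[Section 5]{fine1} this can be upgraded to an estimate in an appropriate $C^{k,\alpha}$-norm adapted to the fibration structure.

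A quantitative inverse function theorem will produce a zero $(\phi_k, h_k)$ of $\F_k$ near $(0, \eta_{k,r})$ provided three ingredients are controlled uniformly in $k$: the size of the initial error, the operator norm of the inverse of the linearisation $P_k$ of $\F_k$ at $(0,\eta_{k,r})$, and the Lipschitz constant of the nonlinear remainder on a ball of definite radius. The last follows from uniform geometric bounds on $\omega_{k,r}$ and its derivatives in the patched H\"older norm, essentially as in \cite{fine1,bronnle,fibrations}. The first is supplied by Theorem \ref{thm:extremalapprox} once $r$ is taken large enough. The genuinely new content, and the main obstacle, is thus controlling $P_k^{-1}$.

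To analyse $P_k^{-1}$, I would use the splitting
$$C^{\infty}(X,\R)^T \cong C^{\infty}(B,\R)^T \oplus C^{\infty}_E(X,\R)^T \oplus C^{\infty}_R(X,\R)^T$$
together with the identification of $\bar\mfh^T$ as lifts of base holomorphy potentials plus global fibre holomorphy potentials provided by Proposition \ref{automsofX}, and then study $P_k$ block by block. On the $R$-component the leading term of $P_k$ is the fibrewise Lichnerowicz operator $\scD_{\scV}^*\scD_{\scV}$, which is a fibrewise isomorphism by ellipticity. On the $E$-component the leading term is $k^{-1}$ times $p\circ\L_1$, whose mapping properties are precisely those established in Theorem \ref{thm:mappingprops}; crucially, its cokernel consists exactly of the global fibre holomorphy potentials, which are absorbed into the $\bar\mfh^T$-factor of the domain. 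On the $C^{\infty}(B)^T$-component the leading term is $k^{-2}$ times the linearised twisted extremal operator, which by Theorem \ref{fibrations-result} is surjective modulo base holomorphy potentials, again absorbed into $\bar\mfh^T$ via Proposition \ref{automsofX}. Combining the three blocks, $P_k$ is an isomorphism onto its codomain, and the worst decay (the $k^{-2}$ on the base factor) yields $\|P_k^{-1}\| \leq C k^N$ for some $N$ independent of $k$. The $T$-invariance assumption is used here to guarantee that each block is a real operator, and the lifting hypothesis on $\Aut(q)$ is what ensures the block-wise cokernels exhaust $\bar\mfh^T$.

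With these bounds in place, the remainder of the argument is bookkeeping: taking $r$ large compared to $N$ guarantees that the product of the initial error $O(k^{-r-1})$ with $\|P_k^{-1}\|^2$ and the Lipschitz constant of the nonlinear remainder tends to zero, so the quantitative inverse function theorem applies and produces a genuine zero $(\phi_k, h_k)$ of $\F_k$ close to $(0,\eta_{k,r})$ for all $k \gg 0$. The resulting metric $\omega_{k,r} + \ddc \phi_k$ is then the desired extremal metric in $c_1(kL+H)$, with associated extremal holomorphic vector field the limit of the vector fields determined by $\eta_{k,r}$.
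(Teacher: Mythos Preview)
Your proposal is correct and follows essentially the same approach as the paper: apply a quantitative implicit function theorem to the extremal operator, using Theorem \ref{thm:extremalapprox} for the smallness of the initial error, a polynomial bound $\|P_k^{-1}\|\leq Ck^{N}$ on the right inverse of the linearisation, and a Lipschitz estimate on the nonlinear remainder, then choose $r$ large relative to $N$. The paper packages the bound on the inverse slightly differently, citing Fine's Schauder-type estimate and \cite[Lemma 6.4]{fibrations} rather than arguing block by block, and obtains the explicit exponent $N=3$ (so $r\geq 6$ suffices), working in Sobolev rather than H\"older norms; but your block description of the leading behaviour on $C^{\infty}(B)$, $C^{\infty}_E$, and $C^{\infty}_R$ is exactly what underlies those cited results together with the new $E$-component contribution from Theorem \ref{thm:mappingprops}.
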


The proof will rely on a quantitative version of the implicit function theorem, of the following form:

\begin{theorem}\label{implicitfnthm}\cite[Theorem 25]{bronnle} Consider a differentiable map of Banach spaces $\F: \scB_1 \to \scB_2$  whose derivative at $0$ is surjective with right-inverse $\scP$. Denote by 
\begin{enumerate}[(i)]
\item $\delta'$ the radius of the closed ball in $\scB_1$ centred at $0$ on which $\F-D\F$ is Lipschitz of constant $(2\|\scP\|)^{-1}$,
\item $\delta = \delta' (2\|\scP\|)^{-1}$.
\end{enumerate} 
For all $y \in B_2$ such that $\|y-\F(0)\| < \delta$, there exists $x\in \scB_1$ satisfying $\F(x) = y$. 
\end{theorem}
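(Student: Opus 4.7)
The plan is to combine the approximate solutions constructed in Theorem \ref{thm:extremalapprox} with the quantitative implicit function theorem, Theorem \ref{implicitfnthm}, following the broad pattern of \cite{hong1,fine1,bronnle,lu-seyyedali,fibrations}. Fix the compact torus $T$ of Remark \ref{newtorus}. For each integer $r \geq 1$, let $\omega_{k,r}$ and $\eta_{k,r}$ be the objects produced by Theorem \ref{thm:extremalapprox}, and consider the $T$-equivariant extremal operator
$$\F_k(\phi, h) = S(\omega_{k,r} + \ddc \phi) - h - \tfrac{1}{2}\langle \nabla h, \nabla \phi \rangle_{\omega_{k,r}},$$
defined on $T$-invariant K\"ahler potentials $\phi \in C^{l+4,\alpha}(X,\R)^T$ and $T$-invariant holomorphy potentials $h \in \bar\mfh^T$, with values in $C^{l,\alpha}(X,\R)^T$. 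By construction $\F_k(0, \eta_{k,r}) = O(k^{-r-1})$ pointwise; the patching argument of \cite[Section 5]{fine1}, already used in our previous work \cite{fibrations}, upgrades this to an estimate of the same order in the $k$-dependent $C^{l,\alpha}$-norm adapted to the family $\omega_{k,r}$, in which the geometry looks uniformly bounded as $k \to \infty$.

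The main analytic content is the estimation of the two quantities $\|\scP_k\|$ and $\delta_k'$ appearing in Theorem \ref{implicitfnthm}, where $\scP_k$ denotes a right inverse for the linearisation $\scL_k$ of $\F_k$ at $(0, \eta_{k,r})$. Using the splitting
$$C^{\infty}(X,\R)^T \cong C^{\infty}(B,\R)^T \oplus C^{\infty}_E(X,\R)^T \oplus C^{\infty}_R(X,\R)^T$$
of equation \eqref{eq:function-decomposition}, the computations of Section \ref{sec:approx} show that $\scL_k$ has an approximately block-triangular form with leading blocks $-\D^*_{\scV}\D_{\scV}$ on the $R$-component, $-k^{-1} p \circ \L_1$ on the $E$-component, and $-k^{-2} \scL_{\Omega}$ on the base component. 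The invertibility of these three blocks modulo their kernels is guaranteed respectively by fibrewise ellipticity, by Theorem \ref{thm:mappingprops} (using the extremal symplectic connection hypothesis and $T$-invariance of $\omega_X$), and by Theorem \ref{fibrations-result} (using that $\omega_B$ is a twisted extremal metric and the lift of $T_q$ acts on $X$). Proposition \ref{automsofX}, together with the assumption that $\Aut(q)$ lifts to $(X,H)$, identifies the direct sum of these kernels with the space of $T$-invariant holomorphy potentials on $(X,H)$, which is precisely the factor $\bar\mfh^T$ we quotient by. Careful bookkeeping of the three scalings yields a polynomial bound $\|\scP_k\| = O(k^N)$ for a fixed integer $N$.

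For the nonlinear part, the difference $\F_k - D\F_k$ is a polynomial expression in at most four derivatives of $\phi$ with coefficients uniformly bounded in the scaled norm, and so standard Schauder-type estimates produce a radius $\delta_k' \geq c k^{-N'}$ on which $\F_k - D\F_k$ is Lipschitz with constant $(2\|\scP_k\|)^{-1}$. Hence Theorem \ref{implicitfnthm} applies provided the approximation error satisfies $\|\F_k(0, \eta_{k,r})\| < \delta_k = \delta_k'/(2\|\scP_k\|)$, and this is arranged by taking $r$ sufficiently large (independent of $k$). The theorem then delivers $(\phi_k, h_k)$ with $\F_k(\phi_k, h_k) = 0$, so that $\omega_{k,r} + \ddc \phi_k$ is a genuine extremal K\"ahler metric in $kc_1(L) + c_1(H)$ whose extremal vector field has holomorphy potential $h_k$.

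The principal obstacle is the uniform control of $\|\scP_k\|$: the linearisation $\scL_k$ degenerates at three distinct rates on the three summands, and the right inverse must be constructed block by block, using the appropriate partial inverses on each component and iterating to account for the off-diagonal coupling. This requires choosing the weighted norms so that the scaling of each block is compensated, and verifying that the passage from the formal block decomposition to an honest right inverse on the full space only loses a bounded power of $k$. Once this is achieved, the choice of $r$ is automatic and the remainder of the argument proceeds as in \cite{fine1,bronnle,fibrations}.
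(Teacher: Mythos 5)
Your proposal does not prove the statement it is attached to. Theorem \ref{implicitfnthm} is an abstract quantitative inverse function theorem for Banach spaces, cited from Br\"onnle: given $\F: \scB_1 \to \scB_2$ with $D\F(0)$ surjective with right inverse $\scP$, and given the Lipschitz radius $\delta'$ for the nonlinear part, one must show that every $y$ with $\|y - \F(0)\| < \delta = \delta'(2\|\scP\|)^{-1}$ is in the image of $\F$. What you have written instead is a sketch of Theorem \ref{endmainthm} --- the application of this abstract result to the fibration $(X, kL+H)$, complete with the approximate solutions $\omega_{k,r}$, the bounds on $\|\scP_{k,r}\|$, and the choice of $r$. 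Indeed your very first sentence invokes Theorem \ref{implicitfnthm} as an input, so the argument is circular as a proof of that theorem: you are assuming the statement you were asked to establish. (For what it is worth, the paper itself does not reprove this theorem either; it simply cites \cite[Theorem 25]{bronnle}.)

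The actual proof of the statement is a short contraction-mapping argument, independent of any of the fibration geometry. Write $\scN = \F - D\F$, so that $\scN(0) = \F(0)$ and $\F(x) = y$ is equivalent to $D\F(x) = y - \scN(x)$. Consider the map $T(x) = \scP\bigl(y - \scN(x)\bigr)$ on the closed ball of radius $\delta'$ in $\scB_1$. Since $\scN$ is Lipschitz there with constant $(2\|\scP\|)^{-1}$, the map $T$ is a $\tfrac{1}{2}$-contraction, and it preserves the ball because
$$\|T(x)\| \leq \|\scP\|\bigl(\|y - \F(0)\| + \|\scN(x) - \scN(0)\|\bigr) \leq \|\scP\|\,\delta + \tfrac{1}{2}\|x\| \leq \tfrac{1}{2}\delta' + \tfrac{1}{2}\delta' = \delta'.$$
The Banach fixed point theorem then yields $x$ with $x = \scP(y - \scN(x))$, hence $D\F(x) = y - \scN(x)$ (since $\scP$ is a right inverse of $D\F$), i.e.\ $\F(x) = y$. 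None of the material you develop --- the splitting of function spaces, the block structure of the linearisation, the polynomial bounds in $k$ --- belongs in this proof; it belongs in the proof of Theorem \ref{endmainthm}, where the paper does carry out essentially the programme you describe.
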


We will apply this with $\F$ the extremal operator. To produce an extremal metric, it follows that that we need to bound both the nonlinear terms of the extremal operator and the right inverse $P$ of its linearisation. 

We fix a large integer $r \gg 0$, and start with a bound on the linearisation of the extremal operator. Denote by $L^2_p(\omega_{k,r})$ the Sobolev space of functions on $X$ measured with respect to $\omega_{k,r}$ which have integral zero with respect to $\omega_{k,r}$. We shall assume $p \gg 0$, and we emphasise that this $p$ is unrelated to the projection onto $C_E^{\infty}(X)$ of Section \ref{sec:optimal}. 

We begin with the following estimate on the Lichnerowicz operator due to Fine.

\begin{lemma} \cite[Lemma 6.8]{fine1} For each $p$ and $r$, there are constants $C, k_0>0$ such that for all $k \geq k_0$ and $\phi \in \|\phi\| \in L^2_{p + 4}(\omega_{k,r})$ we have $$\|\phi\|_{L^2_{p + 4}(\omega_{k,r})} \leq C ( \|\phi\|_{L^2(\omega_{k,r})}  + \|\scD_{k,r}^*\scD_{k,r}\phi\|_{L^2_{p}(\omega_{k,r})}),$$ where $\scD_{k,r}^*\scD_{k,r}$ denotes the Lichnerowicz operator with respect to $\omega_{k,r}$. \end{lemma}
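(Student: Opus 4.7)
The plan is to establish this bound by combining a standard interior elliptic estimate for the limit operator in the adiabatic limit with a covering argument. The content of the statement is Gårding's inequality for the fourth-order elliptic operator $\scD_{k,r}^*\scD_{k,r}$; the genuine subtlety is that as $k \to \infty$ the metric $\omega_{k,r}$ degenerates in the horizontal directions (it stretches by a factor of $k$), so one must arrange that all constants appearing in the Schauder bounds are uniform in $k$.

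First, I would cover $B$ by a finite collection of coordinate balls $U_\alpha$ of some fixed radius, on each of which the fibration is smoothly trivialised as $\pi^{-1}(U_\alpha) \cong F_\alpha \times U_\alpha$. On each patch I would rescale the base coordinates by $\sqrt{k}$, so that $U_\alpha$ opens up to a ball of radius of order $\sqrt{k}$ in $\C^n$, and the rescaled metric $k\omega_B$ becomes $\omega_B$ computed in the enlarged coordinates. In these rescaled coordinates the family of metrics $\omega_{k,r}$ converges, in any $C^{\ell}_{\mathrm{loc}}$ norm and on any fixed compact subset, to the product metric $\omega_F + \omega_{\mathrm{Euc}}$ on $F_\alpha \times \C^n$. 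The Lichnerowicz operator of the rescaled $\omega_{k,r}$ likewise converges in $C^{\infty}_{\mathrm{loc}}$ to the Lichnerowicz operator of the product metric, which has smooth coefficients and is manifestly fourth-order elliptic.

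For the product limit operator the standard interior Sobolev estimate applies and provides a constant $C_0$ depending only on $p$ (and the fixed fibre geometry). A straightforward perturbation of the coefficients, which are $C^{\ell}$-close on compact sets for $k$ large, upgrades this to the analogous estimate for the rescaled $\scD_{k,r}^*\scD_{k,r}$ with constant $2C_0$ on each relevant ball for $k \geq k_0$. Undoing the rescaling translates this back into an estimate on each original patch $\pi^{-1}(U_\alpha)$, with the $\omega_{k,r}$-adapted Sobolev norms on the two sides scaling consistently; this is precisely the virtue of choosing the factor $\sqrt{k}$ as the rescaling parameter, and is what ensures that the final constant is independent of $k$.

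Finally I would globalise via a partition of unity $\{\chi_\alpha\}$ subordinate to the cover, applying the local estimate to $\chi_\alpha \phi$. The commutators $[\scD_{k,r}^*\scD_{k,r}, \chi_\alpha]$ are differential operators of order at most three whose coefficients involve derivatives of $\chi_\alpha$, and they contribute terms that can be absorbed into the $L^2_{p+4}$ norm on the left plus the $L^2$ norm on the right by the usual interpolation trick. The main obstacle throughout is keeping all scaling factors straight: one must check that the perturbative step at the rescaled level, the translation of norms back and forth under the $\sqrt{k}$-rescaling, and the commutator absorption after gluing all produce bounds whose constants are genuinely independent of $k$. Once these bookkeeping issues are handled, the lemma follows.
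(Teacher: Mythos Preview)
Your outline is essentially correct and is in fact a faithful sketch of Fine's original argument in \cite[Section 6]{fine1}: local trivialisation, the $\sqrt{k}$ base rescaling so that the metrics converge in $C^{\ell}_{\mathrm{loc}}$ to a product, the interior elliptic estimate for the limiting Lichnerowicz operator, a perturbation to the nearby operators, and a partition-of-unity patching. The paper itself does not reprove the lemma at all; it simply invokes Fine's result with the remark that his argument uses no special geometric features beyond the adiabatic fibration structure, all of which are present here. So you have supplied more than the paper does.

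One small clarification worth making explicit in your write-up: the reason the ``translation of norms back and forth'' costs nothing is that the Sobolev spaces $L^2_p(\omega_{k,r})$ are defined intrinsically by the metric, hence are diffeomorphism invariant; the $\sqrt{k}$ rescaling is just a coordinate change, so the norms are literally unchanged rather than merely scaling compatibly. Likewise, in the patching step the cutoffs $\chi_\alpha$ live on $B$, so in the rescaled coordinates their derivatives acquire factors of $k^{-1/2}$, which makes the commutator terms \emph{small} rather than merely bounded---this is what lets the absorption go through uniformly.
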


Fine proves this for general fibrations endowed with metrics of the form we have considered, and does not use any special geometric properties that do not apply to our situation. In our situation, however, the Lichnerowicz operator will have a non-trivial kernel and cokernel when $(X,H)$ admits automorphisms. To describe the kernel and cokernel more explicitly, we begin by describing the change in holomorphy potentials as $k$ and $r$ vary.

We may assume $\omega_X$ is K\"ahler, rather than just relatively K\"ahler, as this does not change any of the hypotheses. Indeed, if one modifies $\omega_X$ by pulling back, say, the twisted extremal metric from $B$, this does not change that the resulting form is fibrewise cscK or an optimal (or extremal) symplectic connection.

Following Remark \ref{newtorus}, we fix a  torus containing the flow of the real holomorphic vector field extremal symplectic connection, and a lift of a torus containing the flow of the real holomorphic vector field associated to the twisted extremal metric on the base. As in Remark \ref{newtorus}, we fix a torus $T$ containing both of these tori, and emphasise again that if $\omega_X$ is an optimal symplectic connection and $\omega_B$ is a twisted cscK metric, $T$ can be taken to be trivial. 

We also fix a maximal compact torus $T_q\subset \Aut(q)$ containing the flow of the twisted extremal vector field, and a maximal compact torus of $\Aut(X,H)$ containing the vector field associated to the optimal symplectic connection. Choosing a lift of $T_q$, we denote by $T_X \subset \Aut(X,H)$ a maximal compact torus containing both $T_q$ and $T_E$. By Proposition \ref{automsofX}, the lift of $T_q$ determines a splitting of of $\Lie(T_X)$ as a direct sum of $\Lie(T_q)$ and $\Lie(T_E)$, and similarly for the natural complexifications. 

Let $\xi \in T_X^{\C}\subset \Aut(X,H)$ be a vector field on $(X,H)$. Let $\xi_q + \xi_E$ be the decomposition of $\xi$ with $\xi_q$ the lift of a vector field on $q: (B,L) \to \M$ and $\xi_E$ induced by an element of $\Lie(X/B,H)$ using the decomposition just described. Let $h_X$ be the holomorphy potential of $\xi$ with respect to $\omega_X$ and $h_B$ the holomorphy potential of $\xi_B$ with respect to $\omega_B$. Thus $k\pi^*h_B + h_X$ is the holomorphy potential for $\xi$ with respect to $\omega_k = k\omega_B +\omega_X$, where we have used that the flow of $\xi_E$ is induced by an element of $\Lie(X/B,H)$. 

We denote by $\gamma_{k,r}$ the K\"ahler potential for $\omega_{k,r}$ with respect to $\omega_k$ constructed in Section \ref{sec:approx}, by which we mean $$\omega_{k,r} = \omega_k + \ddb \gamma_{k,r}.$$ Then the holomorphy potential for $\xi$ with respect to $\omega_{k,r}$ is $$k\pi^*h_B + h_X + \frac{1}{2}\langle \nabla \phi_{k,r}, \nabla(k\pi^*h_B + h_X)\rangle,$$ where the gradient is computed with respect to $\omega_k$.

\begin{definition} For each $k,r$ we define a map $$\tau_{r,k}: \Lie(\Aut(X,H)) \to C^{\infty}(X)$$ by $$\tau_{r,k}(\xi) = k\pi^*h_B + h_X + \frac{1}{2}\langle \nabla \phi_{k,r}, \nabla(k\pi^*h_B + h_X)\rangle.$$ \end{definition}

The map $\tau_{k,r}$ induces a map from $\mft = \Lie(T_X)$ to $C^{\infty}(X,\R)$.

\begin{lemma}\label{inversebounds} There exists a constant $C$ independent of $k$ and $\phi$ such that the operators $\L_{k,r}: (L^2_{p+4}(\omega_{k+4,r}))^T\times \bar\mft \to (L^2_p(\omega_{k,r}))^T$ defined by $$\L_{k,r}(\phi, \xi) = -\scD_{k,r}^*\scD_{k,r}(\phi) + \tau_{k,r}(\xi)$$ have right inverses $\scQ_{k,r}$ which satisfy $$\|\scQ_{k,r}(\phi)\|_{L^2_p(\omega_{k,r})}\leq C k^3\|\phi\|_{L^2_p(\omega_{k,r})}.$$  
\end{lemma}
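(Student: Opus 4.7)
The plan is to follow the standard adiabatic limit strategy used in \cite{fine1, bronnle, lu-seyyedali, fibrations}: argue by contradiction, decompose functions according to the splitting \eqref{eq:function-decomposition}, and invoke the three invertibility results already established (fibrewise ellipticity of $\D^*_{\scV}\D_{\scV}$ on $C^{\infty}_R(X)$, Theorem \ref{thm:mappingprops} for $p\circ \L_1$ on $E$, and Theorem \ref{fibrations-result} for the twisted Lichnerowicz operator on $B$). Note that thanks to the $T$-invariance arranged in Remark \ref{newtorus}, each of these operators is a genuine real operator on $T$-invariant functions; I will use this implicitly throughout. First I would verify the set-theoretic statements that $\L_{k,r}$ is Fredholm of index zero and that $\tau_{k,r}$ is injective on $\bar{\mft}$ for all $k\gg 0$: these follow from elliptic theory together with the fact that, for each $\xi \in \mft$, the function $\tau_{k,r}(\xi)$ is by construction a genuine holomorphy potential for $\xi$ with respect to $\omega_{k,r}$, and hence lies in the kernel of $\scD_{k,r}^*\scD_{k,r}$.

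The heart of the proof is the a priori bound. Suppose for contradiction that no such $C$ exists, so there are sequences $k_j \to \infty$ and $(\phi_j, \xi_j) \in (L^2_{p+4}(\omega_{k_j,r}))^T \times \bar{\mft}$ with $\|(\phi_j,\xi_j)\|_{L^2_{p+4}(\omega_{k_j,r})} = 1$ but $k_j^{-3}\|\L_{k_j,r}(\phi_j,\xi_j)\|_{L^2_{p}(\omega_{k_j,r})} \to 0$. After passing to a subsequence we may assume $\xi_j \to \xi_{\infty} \in \bar{\mft}$. Write $\phi_j = \phi_{B,j} + \phi_{E,j} + \phi_{R,j}$ using the decomposition \eqref{eq:function-decomposition}. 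The elliptic estimate for $\D^*_{\scV}\D_{\scV}$ applied fibrewise, combined with the fact that $\scD_{k,r}^*\scD_{k,r} = \D^*_{\scV}\D_{\scV} + O(k^{-1})$ acts as an $O(1)$ isomorphism on $C^{\infty}_R(X)$, shows that $\|\phi_{R,j}\|_{L^2_{p+4}} = O(\|\L_{k_j,r}(\phi_j,\xi_j)\|_{L^2_p})$, so the $R$ component contributes nothing. Projecting onto $E$ and using Theorem \ref{thm:mappingprops} together with the expansion of Proposition \ref{prop:linearisation}(iii), the leading order behaviour of $\scD_{k,r}^*\scD_{k,r}$ on $C^{\infty}_E(X)$ is $k^{-1} \, p \circ \L_1$, which is an elliptic isomorphism modulo global holomorphy potentials. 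Since $\tau_{k_j,r}(\xi_j)$ absorbs the fibre holomorphy potential part of $\xi_j$ at leading order, the $E$ component is controlled by a factor of $k$, so again contributes no obstruction to a $k^3$ bound.

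The main obstacle is therefore the base component $\phi_{B,j}$. This is where the factor $k^3$ must be extracted. Pairing $\L_{k,r}(\phi_j,\xi_j)$ with a test function pulled back from $B$ and integrating over the fibres, the leading surviving contribution involves the linearised twisted scalar curvature operator $\scL_{\Omega_{WP}}$ of Theorem \ref{fibrations-result} acting on $\phi_{B,j}$, where the Weil-Petersson identification comes from Proposition \ref{WP-rho} and the expansion of the scalar curvature in Corollary \ref{cor:scalarexpansion}. By Proposition \ref{prop:linearisation}(ii), applied to $r=1$ first and then propagated to higher $r$ by the stability statement of Proposition \ref{prop:linearisation}, this operator appears at order $k^{-2}$ relative to the $L^2(\omega_{k,r})$ pairing (one power of $k^{-1}$ from the expansion, another from fibre integration converting $L^2(\omega_{k,r})$ norms on $X$ into $L^2(\omega_B)$ norms on $B$). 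Combined with Proposition \ref{automsofX}, which guarantees that any element of $\ker \scL_{\Omega_{WP}}$ lifts to an element of $\Aut(X,H)$, invertibility of $\scL_{\Omega_{WP}}$ modulo $\bar{\mfp}$ implies that $\|\phi_{B,j}\|_{L^2_{p+4}} = O(k_j^2 \|\L_{k_j,r}(\phi_j,\xi_j)\|_{L^2_p})$ after subtracting the contribution of $\tau_{k_j,r}(\xi_j)$ at the base level. A final power of $k$ comes from the normalisation of the injection of $\bar{\mfp}$ into $\bar{\mft}$: the lifted holomorphy potential of a base vector field with respect to $\omega_{k,r}$ is of the form $k\pi^*h_B + O(1)$, forcing one additional factor of $k$ when passing between norms on $B$ and on $X$.

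Combining the three pieces yields $\|\phi_j\|_{L^2_{p+4}(\omega_{k_j,r})} + \|\xi_j\| = O(k_j^3 \|\L_{k_j,r}(\phi_j,\xi_j)\|_{L^2_p(\omega_{k_j,r})})$, contradicting the assumption that the left hand side is $1$ while the right hand side tends to zero. This gives the a priori bound, and the right inverse $\scQ_{k,r}$ is then constructed in the usual way as the inverse of $\L_{k,r}$ on the orthogonal complement of the (approximate) kernel, with the desired operator norm bound. The only place genuine care is needed is in checking that the constants in the local expansions of Section \ref{sec:approx}, which are pointwise in the sense of Remark \ref{norminuse}, can be upgraded to $C^l$ estimates uniform in $k$; this is handled by the patching argument of \cite[Section 5]{fine1} referred to explicitly in that remark.
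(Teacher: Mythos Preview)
Your approach is essentially the one the paper intends: the paper's proof simply refers to \cite[Lemma 6.4]{fibrations} and notes that the only genuinely new ingredient here is the presence of global fibrewise holomorphy potentials, which contribute an $O(1)$ piece and hence do not affect the $O(k^3)$ bound. You have spelled out the standard contradiction/decomposition argument underlying that reference, correctly invoking the three invertibility results (fibrewise Lichnerowicz on $C^{\infty}_R$, Theorem \ref{thm:mappingprops} on $E$, and Theorem \ref{fibrations-result} on $B$) together with Fine's elliptic estimate and patching argument.

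One point in your write-up deserves correction: your attribution of the ``final power of $k$'' to the normalisation $\tau_{k,r}(\xi) = k\pi^*h_B + O(1)$ of lifted base holomorphy potentials cannot be right. Indeed, Fine's original bound in \cite{fine1} is already $O(k^3)$ in the setting where $\Aut(q)$ and $\Aut(X,H)$ are discrete, so there are no holomorphy potentials to normalise. The correct source of the $k^3$ (rather than $k^2$) is the more delicate eigenvalue analysis in \cite{fine1}, combining the elliptic estimate of the preceding lemma with a Poincar\'e-type inequality and the scaling of the $L^2(\omega_{k,r})$-norms; the operator acting at order $k^{-2}$ on base functions together with the off-diagonal coupling to the vertical component is what produces the $k^{-3}$ lower bound on the first nonzero eigenvalue. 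Your decomposition heuristic identifies the base component as the bottleneck, which is correct, but the precise factor counting should be deferred to \cite{fine1} and \cite[Lemma 6.4]{fibrations} rather than to the holomorphy potential scaling.
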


\begin{proof} See \cite[Lemma 6.4]{fibrations}. The only new part comes from the global fibrewise holomorphy potentials, but restricted to these functions the map $\mathcal{L}_{k,r}$  is $O(1)$, hence so is the inverse, giving a better bound than than the $O(k^3)$ bound required.
\end{proof}

The operator $\D^*_{k,r}\D_{k,r}$ is in general a complex operator. We now, however, consider the linearisation $\scP_{k,r}$ of the actual extremal operator, which is necessarily real, being the linearised operator of a real-valued operator. Moreover, to leading order the linearised operator  $\scP_{k,r}$  of the genuine extremal operator is the Lichnerowicz type operator, and so we can use bounds on the operator norm of the former operator to obtain the desired bounds for the latter.

\begin{proposition}\label{prop:bounds1}\cite[Proposition 6.5]{fibrations} Fix a positive integer $r \gg 0$. Denote \begin{align*} & \scG_{k,r} : (L_{p+4}^2)^T \times \bar\mft  \to (L_p^2)^T, \\ & \scG_{k,r}(\phi,h) = -\mathcal{D}^*_{k,r} \mathcal{D}_{k,r} (\phi) + \frac{1}{2} \langle \nabla \left( S(\omega) - \tau_{k,r}(h) \right), \nabla \phi \rangle -\tau_{k,r}(h).\end{align*} Then there exists a $C$ independent of $k$ such that $\scG_{r,p}$ has a right inverse $\scP_{k,r}$ with $\|\scP_{r,p}\|_{op,k,r} \leq C k^3$, where $\|\cdot \|_{op,k,r} $ denotes the operator norm with respect to $\omega_{k,r}$. 
\end{proposition}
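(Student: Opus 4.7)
The plan is to treat $\scG_{k,r}$ as a small perturbation of the operator $\L_{k,r}$ of Lemma \ref{inversebounds}, for which the bound $\|\scQ_{k,r}\|_{op,k,r}\leq Ck^3$ is already in hand. Writing $\scG_{k,r}(\phi,h) = \L_{k,r}(\phi,h) + M_{k,r}(\phi)$, the residual operator is
$$M_{k,r}(\phi) = \tfrac{1}{2}\langle \nabla(S(\omega_{k,r}) - \tau_{k,r}(h_{k,r})), \nabla \phi\rangle_{\omega_{k,r}},$$
where $h_{k,r}$ denotes the approximate extremal holomorphy potential implicit in Theorem \ref{thm:extremalapprox}. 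This is a first-order differential operator in $\phi$ whose coefficient is the gradient of a function that is small by the approximation result.

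The first step is to upgrade Theorem \ref{thm:extremalapprox}, which as stated gives only a pointwise bound $S(\omega_{k,r}) - \tau_{k,r}(h_{k,r}) = O(k^{-r-1})$, to a uniform bound in the $k$-rescaled $L^2_{p+1}(\omega_{k,r})$-norm. This is achieved by the patching argument of \cite[Section 5]{fine1}, using that all of the ingredients entering $\omega_{k,r}$ and $\tau_{k,r}(h_{k,r})$ are smooth and that their derivatives of any order can be bounded uniformly in $k$. Combined with the elliptic regularity built into the $\omega_{k,r}$-adapted norms, this yields $\|M_{k,r}\|_{op,k,r} \leq C k^{-r-1}$.

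The remainder of the argument is a standard Neumann series. Composing with the right inverse $\scQ_{k,r}$ of Lemma \ref{inversebounds} gives $\|M_{k,r}\scQ_{k,r}\|_{op,k,r} \leq C k^{-r+2}$, which is strictly less than $\tfrac{1}{2}$ for $k\gg 0$ once $r$ is taken sufficiently large. Hence $\Id + M_{k,r}\scQ_{k,r}$ is invertible by the Neumann series with inverse of norm at most $2$, and then $\scP_{k,r} = \scQ_{k,r}(\Id + M_{k,r}\scQ_{k,r})^{-1}$ is a right inverse of $\scG_{k,r}$ satisfying $\|\scP_{k,r}\|_{op,k,r} \leq 2Ck^3$. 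The main obstacle is making the first step rigorous: the pointwise expansion of Theorem \ref{thm:extremalapprox} must be promoted to uniform weighted Sobolev estimates via the patching technique, and one must check that $\scQ_{k,r}$ can be chosen to preserve $T$-invariance, which follows from Remark \ref{newtorus}, so that the Neumann iteration stays inside the torus-invariant subspace.
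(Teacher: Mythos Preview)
Your approach is correct and matches the paper's. The paper does not supply a self-contained proof here but cites \cite[Proposition 6.5]{fibrations}, and the sentence immediately preceding the proposition spells out exactly your strategy: ``to leading order the linearised operator \ldots\ is the Lichnerowicz type operator, and so we can use bounds on the operator norm of the former operator to obtain the desired bounds for the latter.'' Your Neumann-series perturbation of $\L_{k,r}$ by the first-order residual $M_{k,r}$, combined with Fine's local-to-global patching to pass from the pointwise expansion of Theorem~\ref{thm:extremalapprox} to Sobolev estimates, is precisely the argument behind that citation.

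Two minor remarks. First, the $h$ appearing inside the gradient in the displayed definition of $\scG_{k,r}$ should indeed be read as the fixed approximate holomorphy potential rather than the variable $h$ (otherwise the operator would not even be linear); your interpretation is the correct one. Second, when you convert the pointwise bound $O(k^{-r-1})$ into an operator-norm bound via Fine's patching, you may lose a half-power of $k$ (cf.\ the $O(k^{-r-1/2})$ appearing later in the proof of Theorem~\ref{endmainthm}), so the honest estimate is $\|M_{k,r}\scQ_{k,r}\|_{op,k,r}\leq Ck^{-r+5/2}$ rather than $Ck^{-r+2}$; this is harmless since the hypothesis $r\gg 0$ absorbs any fixed power.
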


The next required bound concerns the non-linear part of the extremal operator. Precisely, we will consider the operator $\F_{k,r}:   (L_{p+4}^2)^T \times \bar\mft  \to (L_p^2)^T$ defined by  $$\F_{k,r}(\psi, h)=S(\omega_{k,r} + \ddb \psi) -  \frac{1}{2}\langle \nabla \eta_{k,r},\nabla (\phi_{k,r})\rangle - \eta_{k,r}  -\frac{1}{2} \langle \nabla(\tau_{k,r} (h)) , \nabla \psi \rangle -  \tau_{k,r} (h),$$ where the gradients are taken with respect to $\omega_k$ and $\eta_{k,r}$ is the holomorphy potential constructed in Section \ref{sec:approx} which makes $\omega_{k,r}$ approximately extremal. Note that the linearisation of $\F_{k,r}$ is the operator $G_{k,r}$ considered above, and a zero of $\F_{k,r}$ is a K\"ahler potential for an extremal metric. Moreover, $\frac{1}{2}\langle \nabla \eta_{k,r},\nabla (\phi_{k,r})\rangle - \eta_{k,r} $ are constants independent of the input $(\psi,h)$, and are subtracted simply so that $\mathcal{F}_{k,r} (0,0)$ is close to $0$. 

Below we shall take $p \gg 0$ so that $L^2_p$ embeds in $C^{4,\alpha}$ to apply work of Fine, and eventually to conclude that a $L^2_p$-extremal metric is actually smooth, as follows for example from \cite[Lemma 55]{bronnle}.

\begin{lemma}\label{lemma:bound-nonlinear}

Denote by $\scN_{k,r} = \F_{k,r} - \scG_{k,r} $ the nonlinear part of the extremal operator. There exist constants $c,C >0$ such that for all $k \gg 0$, if $\phi,\psi \in (L^2_{p+4}(\omega_{k,r}))^T$ satisfy $\| \phi \|_{L^2_{p+4}(\omega_{k,r})} , \| \psi \|_{L^2_{p+4}} \leq c$ then $$ \| \scN_{k,r} (\phi) - \scN_{k,r} (\psi) \|_{L^2_p } \leq C \big( \| \phi \|_{L^2_{p+4}(\omega_{k,r})} + \| \psi \|_{L^2_{p+4}(\omega_{k,r})} \big)  \| \phi - \psi \|_{L^2_{p+4}(\omega_{k,r})}.$$

\end{lemma}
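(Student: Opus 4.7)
The plan is to reduce the Lipschitz estimate to a bound on the first derivative of $\scN_{k,r}$ by the mean value theorem in the Banach space setting: for any $\phi,\psi$ in the ball of radius $c$, write
\begin{equation*}
\scN_{k,r}(\phi) - \scN_{k,r}(\psi) = \int_0^1 (D\scN_{k,r})_{\chi_t}(\phi - \psi)\, dt, \qquad \chi_t = \psi + t(\phi - \psi).
\end{equation*}
Since $\scG_{k,r}$ is by construction the full linearisation of $\F_{k,r}$ at the origin, $\scN_{k,r} = \F_{k,r} - \scG_{k,r}$ satisfies $(D\scN_{k,r})_0 = 0$. A Taylor expansion in $\chi$ then produces an estimate of the form $\|(D\scN_{k,r})_{\chi}\|_{\mathrm{op}} \leq C\|\chi\|_{L^2_{p+4}(\omega_{k,r})}$ as long as $\chi$ lies in a small enough ball, and integrating over $t$ yields the claimed Lipschitz bound up to constants.

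The nonlinearity in $\F_{k,r}$ arises from two sources: the fully nonlinear dependence of $S(\omega_{k,r} + \ddc\psi)$ on $\psi$, and the contraction $\tfrac{1}{2}\langle\nabla\tau_{k,r}(h),\nabla\psi\rangle$ computed with respect to the perturbed metric. Differentiating twice produces pointwise polynomial expressions in derivatives of $\chi$ of order at most four, with coefficients built from the inverse of $\omega_{k,r}+\ddc\chi$ and its curvature. For $p$ chosen large enough that $L^2_{p+4}$ embeds into $C^4$, these pointwise products are controlled by the $L^2_{p+4}$-norm, reducing everything to proving that the coefficients are bounded uniformly in $k$.

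The main obstacle is therefore the $k$-uniformity of all constants. Since $\omega_{k,r}$ collapses the base relative to the fibres as $k\to\infty$, a direct Sobolev argument would yield constants degenerating with $k$. I would handle this exactly as in \cite[Section 5]{fine1} and \cite[Lemma 6.9]{fibrations}: cover $X$ by a family of charts in which $\omega_{k,r}$ is uniformly comparable to a product model $k\omega_B|_U \oplus \omega_X|_F$, rescale so that the model becomes $k$-independent, apply the standard Schauder and Sobolev inequalities on this unit-scale model, and then patch via partition of unity. The only novelty compared to \cite{fibrations} is the presence of the term $\tau_{k,r}(h)$, but $h_X$ and $h_B$ are $k$-independent, $\phi_{k,r}$ has $k$-uniform derivatives of all orders (as constructed in Section \ref{sec:approx}), and $\bar\mft$ is finite-dimensional, so $\tau_{k,r}(h)$ and its gradient with respect to $\omega_{k,r}$ are uniformly bounded on the unit-scale charts. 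The resulting estimate is then the desired one.
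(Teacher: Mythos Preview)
Your proposal is correct and follows the same approach as the paper: the paper's own proof consists of a single sentence invoking the Mean Value Theorem and referring to \cite[Lemma 2.7]{fine1} and \cite[Lemma 6.6]{fibrations} for the details, which is precisely the argument you have sketched. Your elaboration of the $k$-uniformity via Fine's local patching argument is exactly what those references supply.
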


\begin{proof} The proof is an application of the Mean Value Theorem; we refer to \cite[Lemma 2.7]{fine1} and \cite[Lemma 6.6]{fibrations} for further details.
\end{proof}

We now proceed to the proof of Theorem \ref{endmainthm}. 

\begin{proof}[Proof of Theorem \ref{endmainthm}] We apply the qualitative inverse function theorem to the operators $\F_{k,r}$.

The first part of the statement of the quantitative implicit function theorem requires a bound on the non-linear operators $\scN_{k,r} = \F_{k,r} - D\F_{k,r}$. Lemma \ref{lemma:bound-nonlinear} provides a $C>0$ such that for all balls of radius $0<\lambda \ll 1$, the operator $\scN_{k,r}$ is Lipschitz on the ball of radius $\lambda$ with Lipschitz constant $\lambda C$. It follows from this and Lemma \ref{inversebounds} that for $k \gg 0$, the radius $\delta_k'$ for which the operator $\scN_{k,r}$ is Lipschitz with constant $(2\|\scP_{k,r}\|)^{-1}$ is bounded below by $C'k^{-3}$ for some constant $C'$. Thus $\delta_k = \delta_k'(2\|\scP_{k,r}\|)^{-1}$ is bounded below by $C'' k^{-6}$ for some constant $C''$.

Next we require a bound on $\F_{k,r}(0,0)$. This is provided by Theorem \ref{thm:extremalapprox}, which provides that $\F_{k,r} = O(k^{-r-1})$. As mentioned in Remark \ref{norminuse}, the proof given in Section \ref{sec:approx} provides this statement \emph{pointwise}, whereas for our application we require estimates in some $C^p$ norm. However, work of Fine \cite[Lemma 5.6 and Lemma 5.7]{fine1} directly and without change allows us to pass from pointwise norms to the analogous bound in the $C^p$-norm for any $p>0$. 

Thus from Fine's work we obtain that with respect to the $C^p$-norm, we have a bound $\F_{k,r}(0,0) = O(k^{-r-1})$. Note that the $\eta_{k,r}$ appearing in the statement of Theorem \ref{thm:extremalapprox} have terms $\tilde b_j$ that depend on $k$, but that in fact we could have rewritten the expansion in terms of fixed functions, using the expression for the lifts of the holomorphy potentials $b_j$ with respect to $\omega_k$. That these functions are independent of $k$ is important in order to go from the local to global estimates. In terms of the $L^2_p(\omega_{k,r})$-norm, a result of Fine gives that $\|\F_{k,r}(0,0)\|_{L^2_p(\omega_{k,r})} = O(k^{-r-\frac{1}{2}})$ \cite[Lemma 5.7]{fine1}. Thus provided $r\geq 6$, we obtain an $L^2_p(\omega_{k,r})$-bound on $\F_{k,r}(0,0)$ of order $k^{-6-\frac{1}{2}}$.

Thus the hypotheses of the qualitative implicit function are satisfied, and hence $(X,kL+H)$ admits an extremal metric for all $k \gg 0$.
\end{proof}

\vspace{4mm} \noindent {\bf Acknowledgements:} A large part of the present work was done while the authors participated in the ICMS Research in Groups program. We are very grateful to the ICMS for the opportunity. We are also very grateful to the LMS for funding a visit of the second author to the first at the University of Cambridge through their Reseach in Pairs Scheme, where another large part of the present work was completed. In addition, the second author received funding from CIRGET. Finally we would like to thank the anonymous referee for helpful comments.

 \bibliography{morefibrations}

\providecommand{\bysame}{\leavevmode\hbox to3em{\hrulefill}\thinspace}
\providecommand{\MR}{\relax\ifhmode\unskip\space\fi MR }
\providecommand{\MRhref}[2]{%
  \href{http://www.ams.org/mathscinet-getitem?mr=#1}{#2}
}
\providecommand{\href}[2]{#2}
\begin{thebibliography}{10}

\bibitem{biswas}
Boudjemaa Anchouche and Indranil Biswas, \emph{Einstein-{H}ermitian connections
  on polystable principal bundles over a compact {K}\"{a}hler manifold}, Amer.
  J. Math. \textbf{123} (2001), no.~2, 207--228. \MR{1828221}

\bibitem{arezzo-pacard-singer}
Claudio Arezzo, Frank Pacard, and Michael Singer, \emph{Extremal metrics on
  blowups}, Duke Math. J. \textbf{157} (2011), no.~1, 1--51. \MR{2783927}

\bibitem{berman-berndtsson}
Robert~J. Berman and Bo~Berndtsson, \emph{Convexity of the {$K$}-energy on the
  space of {K}\"{a}hler metrics and uniqueness of extremal metrics}, J. Amer.
  Math. Soc. \textbf{30} (2017), no.~4, 1165--1196. \MR{3671939}

\bibitem{besse}
Arthur~L. Besse, \emph{Einstein manifolds}, Classics in Mathematics,
  Springer-Verlag, Berlin, 2008, Reprint of the 1987 edition. \MR{2371700}

\bibitem{TB}
Till Br\"{o}nnle, \emph{Deformation constructions of extremal metrics}, Ph.D.
  thesis, Imperial College, University of London, 2011.

\bibitem{bronnle}
\bysame, \emph{Extremal {K}\"{a}hler metrics on projectivized vector bundles},
  Duke Math. J. \textbf{164} (2015), no.~2, 195--233. \MR{3306554}

\bibitem{calabi}
Eugenio Calabi, \emph{Extremal {K}\"{a}hler metrics. {II}}, Differential
  geometry and complex analysis, Springer, Berlin, 1985, pp.~95--114.
  \MR{780039}

\bibitem{twisted}
Ruadha\'{\i} Dervan, \emph{Uniform stability of twisted constant scalar
  curvature {K}\"{a}hler metrics}, Int. Math. Res. Not. IMRN (2016), no.~15,
  4728--4783. \MR{3564626}

\bibitem{DN}
Ruadha{\'\i} {Dervan} and Philipp {Naumann}, \emph{{Moduli of polarised
  manifolds via canonical K\"{a}hler metrics}}, arXiv e-prints (2018),
  arXiv:1810.02576.

\bibitem{stablemaps}
Ruadha\'i {Dervan} and Julius {Ross}, \emph{{Stable maps in higher
  dimensions}}, To appear in Math. Ann.

\bibitem{fibrations}
Ruadha\'{\i} Dervan and Lars~Martin Sektnan, \emph{Extremal metrics of
  fibrations}, Proc. Lond. Math. Soc. (3) \textbf{120} (2020), no.~4, 587--616.
  \MR{4008378}

\bibitem{donaldson-notes}
Simon~K. Donaldson, \emph{{Lectures on {L}ie groups and geometry}}, Available
  online.

\bibitem{donaldson-surfaces}
\bysame, \emph{Anti self-dual {Y}ang-{M}ills connections over complex algebraic
  surfaces and stable vector bundles}, Proc. London Math. Soc. (3) \textbf{50}
  (1985), no.~1, 1--26. \MR{765366}

\bibitem{donaldson-infinite}
\bysame, \emph{Infinite determinants, stable bundles and curvature}, Duke Math.
  J. \textbf{54} (1987), no.~1, 231--247. \MR{885784}

\bibitem{donaldson-scalar-curvature}
\bysame, \emph{Scalar curvature and projective embeddings. {I}}, J.
  Differential Geom. \textbf{59} (2001), no.~3, 479--522. \MR{1916953}

\bibitem{donaldson-semistable}
\bysame, \emph{Lower bounds on the {C}alabi functional}, J. Differential Geom.
  \textbf{70} (2005), no.~3, 453--472. \MR{2192937}

\bibitem{donaldson-survey}
\bysame, \emph{Stability of algebraic varieties and {K}\"{a}hler geometry},
  Algebraic geometry: {S}alt {L}ake {C}ity 2015, Proc. Sympos. Pure Math.,
  vol.~97, Amer. Math. Soc., Providence, RI, 2018, pp.~199--221. \MR{3821150}

\bibitem{fine1}
Joel Fine, \emph{Constant scalar curvature {K}\"{a}hler metrics on fibred
  complex surfaces}, J. Differential Geom. \textbf{68} (2004), no.~3, 397--432.
  \MR{2144537}

\bibitem{fine2}
\bysame, \emph{Fibrations with constant scalar curvature {K}\"{a}hler metrics
  and the {CM}-line bundle}, Math. Res. Lett. \textbf{14} (2007), no.~2,
  239--247. \MR{2318622}

\bibitem{fine-panov}
Joel Fine and Dmitri Panov, \emph{Symplectic {C}alabi-{Y}au manifolds, minimal
  surfaces and the hyperbolic geometry of the conifold}, J. Differential Geom.
  \textbf{82} (2009), no.~1, 155--205. \MR{2504773}

\bibitem{fischer-grauert}
Wolfgang Fischer and Hans Grauert, \emph{Lokal-triviale {F}amilien kompakter
  komplexer {M}annigfaltigkeiten}, Nachr. Akad. Wiss. G\"{o}ttingen Math.-Phys.
  Kl. II \textbf{1965} (1965), 89--94. \MR{0184258}

\bibitem{fujiki-schumacher}
Akira Fujiki and Georg Schumacher, \emph{The moduli space of extremal compact
  {K}\"{a}hler manifolds and generalized {W}eil-{P}etersson metrics}, Publ.
  Res. Inst. Math. Sci. \textbf{26} (1990), no.~1, 101--183. \MR{1053910}

\bibitem{symplectic-fibrations-book}
Victor Guillemin, Eugene Lerman, and Shlomo Sternberg, \emph{Symplectic
  fibrations and multiplicity diagrams}, Cambridge University Press, Cambridge,
  1996. \MR{1414677}

\bibitem{hong1}
Ying-Ji Hong, \emph{Constant {H}ermitian scalar curvature equations on ruled
  manifolds}, J. Differential Geom. \textbf{53} (1999), no.~3, 465--516.
  \MR{1806068}

\bibitem{lebrun-simanca}
Claude LeBrun and Santiago~R. Simanca, \emph{Extremal {K}\"{a}hler metrics and
  complex deformation theory}, Geom. Funct. Anal. \textbf{4} (1994), no.~3,
  298--336. \MR{1274118}

\bibitem{lwx2}
Chi Li, Xiaowei Wang, and Chenyang Xu, \emph{Quasi-projectivity of the moduli
  space of smooth {K}\"{a}hler-{E}instein {F}ano manifolds}, Ann. Sci. \'{E}c.
  Norm. Sup\'{e}r. (4) \textbf{51} (2018), no.~3, 739--772. \MR{3831036}

\bibitem{lwx}
\bysame, \emph{On the proper moduli spaces of smoothable
  {K}\"{a}hler--{E}instein {F}ano varieties}, Duke Math. J. \textbf{168}
  (2019), no.~8, 1387--1459. \MR{3959862}

\bibitem{lu-seyyedali}
Zhiqin Lu and Reza Seyyedali, \emph{Extremal metrics on ruled manifolds}, Adv.
  Math. \textbf{258} (2014), 127--153. \MR{3190425}

\bibitem{symplectic-topology}
Dusa McDuff and Dietmar Salamon, \emph{Introduction to symplectic topology},
  third ed., Oxford Graduate Texts in Mathematics, Oxford University Press,
  Oxford, 2017. \MR{3674984}

\bibitem{michor}
Peter~W. Michor, \emph{Topics in differential geometry}, Graduate Studies in
  Mathematics, vol.~93, American Mathematical Society, Providence, RI, 2008.
  \MR{2428390}

\bibitem{odaka-moduli}
Yuji Odaka, \emph{Compact moduli spaces of {K}\"{a}hler-{E}instein {F}ano
  varieties}, Publ. Res. Inst. Math. Sci. \textbf{51} (2015), no.~3, 549--565.
  \MR{3395458}

\bibitem{ross-thomas}
Julius Ross and Richard Thomas, \emph{An obstruction to the existence of
  constant scalar curvature {K}\"{a}hler metrics}, J. Differential Geom.
  \textbf{72} (2006), no.~3, 429--466. \MR{2219940}

\bibitem{GS}
G\'{a}bor Sz\'{e}kelyhidi, \emph{The {K}\"ahler-{R}icci flow and
  {$K$}-polystability}, Amer. J. Math. \textbf{132} (2010), no.~4, 1077--1090.
  \MR{2663648}

\bibitem{szekelyhidi-book}
\bysame, \emph{An introduction to extremal {K}\"{a}hler metrics}, Graduate
  Studies in Mathematics, vol. 152, American Mathematical Society, Providence,
  RI, 2014. \MR{3186384}

\bibitem{tian-book}
Gang Tian, \emph{Canonical metrics in {K}\"{a}hler geometry}, Lectures in
  Mathematics ETH Z\"{u}rich, Birkh\"{a}user Verlag, Basel, 2000, Notes taken
  by Meike Akveld. \MR{1787650}

\bibitem{uhlenbeck-yau}
Karen Uhlenbeck and {Shing-Tung} Yau, \emph{On the existence of
  {H}ermitian-{Y}ang-{M}ills connections in stable vector bundles}, Comm. Pure
  Appl. Math. \textbf{39} (1986), no.~S, suppl., S257--S293, Frontiers of the
  mathematical sciences: 1985 (New York, 1985). \MR{861491}

\end{thebibliography}
\bibliographystyle{amsplain}

\end{document}